\pgfplotsset{compat=1.7}
\definecolor{matlabcolor1}{rgb}{0,0.4470,0.7410}
\definecolor{matlabcolor2}{rgb}{0.8500,0.3250,0.0980}
\definecolor{matlabcolor3}{rgb}{0.9290,0.6940,0.1250}
\definecolor{matlabcolor4}{rgb}{0.4940,0.1840,0.5560}
\definecolor{matlabcolor5}{rgb}{0.4660,0.6740,0.1880}
\definecolor{matlabcolor6}{rgb}{0.3010, 0.7450, 0.9330}
\definecolor{matlabcolor7}{rgb}{0.6350, 0.0780, 0.1840}
\definecolor{matlabcolor1op}{rgb}{0.5,0.7235,0.8705}
\definecolor{matlabcolor2op}{rgb}{0.8500,0.3250,0.0980}
\definecolor{matlabcolor3op}{rgb}{0.9290,0.6940,0.1250}
\definecolor{matlabcolor4op}{rgb}{0.497,0.4537,0.7132}
\definecolor{matlabcolor5op}{rgb}{0.4660,0.6740,0.1880}
\definecolor{matlabcolor6op}{rgb}{0.3010, 0.7450, 0.9330}
\definecolor{matlabcolor7op}{rgb}{0.4955, 0.3188, 0.6346}
\pgfplotsset{
/pgfplots/colormap={mycolormap}{rgb=(0.4495,0.2869,0.5711) rgb=(0.8994,0.8907,0.9426)}
}
\let \mathbf \bm
\DeclareMathAlphabet\mathbfcal{OMS}{cmsy}{b}{n} 
\newcommand\norm[1]{\left\lVert#1\right\rVert}
\newcommand\inner[2]{\left\langle #1, #2 \right\rangle}
\newcommand{\reals}{\mathbb{R}}
\newcommand{\naturals}{\mathbb{N}_0}
\newcommand{\sym}{\mathbb{S}}
\newcommand{\calH}{\mathcal{H}}
\newcommand{\trace}{{\mathrm{trace}}}
\newcommand\diag{\operatorname{diag}}
\newcommand\rank{\operatorname{rank}}
\newcommand{\prox}{{\rm{prox}}}
\newcommand{\kron}{\otimes}
\DeclareMathOperator*{\argmin}{argmin}
\DeclareMathOperator*{\zer}{zer}
\DeclareMathOperator*{\dom}{dom}
\DeclareMathOperator*{\Id}{Id}
\newcommand{\quadform}[2]{\mathcal{Q}(#1,#2)}
\newcommand{\XId}[1]{#1_{\Id}}
\newcommand{\xmiddle}[1]{\;\middle#1\;}
\newcommand{\bx}{\mathbf{x}}
\newcommand{\bu}{\mathbf{u}}
\newcommand{\by}{\mathbf{y}}
\newcommand{\bz}{\mathbf{z}}
\newcommand{\bfcn}{\mathbf{f}}
\newcommand{\bFcn}{\mathbf{F}}
\newcommand{\bM}{\mathbf{M}}
\newcommand{\bMlij}{\bM_{(l,i,j)}}
\newcommand{\ba}{\mathbf{a}}
\newcommand{\balij}{\mathbf{a}_{(l,i,j)}}
\newcommand{\bzeta}{\boldsymbol{\zeta}}
\newcommand{\bchi}{\boldsymbol{\chi}}
\newcommand{\bxi}{\boldsymbol{\xi}}
\newcommand{\bQ}{\mathbf{Q}}
\newcommand{\bq}{\mathbf{q}}
\newcommand{\SumToZeroMat}{N}
\newcommand{\indentconstr}{\;\;\;}
\newcommand\Update[1]{#1}  
\newcommand\set[1]{\mathord{\left\{ #1 \right\}}}
\newcommand\p[1]{\mathord{\left( #1 \right)}}
\def\th@plain{
  \thm@headfont{\normalfont\sffamily\bfseries}
  \itshape 
}
\def\th@definition{
  \thm@headfont{\normalfont\sffamily\bfseries}
  \thm@notefont{\normalfont\sffamily\bfseries}
}
\newtheoremstyle{myStyle1}
  {0.3cm}
  {0.3cm}
  {\itshape}
  {}
  {\normalfont\sffamily\bfseries}
  {:}
  {.5em}
  {\thmname{#1}\ \thmnumber{#2}\thmnote{ \textnormal{(#3)}}}
\newtheoremstyle{myStyle2}
  {0.3cm}
  {0.3cm}
  {}
  {}
  {\normalfont\sffamily\bfseries}
  {:}
  {.5em}
  {\thmname{#1}\ \thmnumber{#2}\thmnote{ \textnormal{(#3)}}}
\theoremstyle{myStyle1}
\newtheorem{theorem}{Theorem}
\newtheorem{proposition}{Proposition}
\newtheorem{corollary}{Corollary}
\newtheorem{assumption}{Assumption}
\newtheorem{lemma}{Lemma}
\newtheorem{definition}{Definition}
\theoremstyle{myStyle2}
\newtheorem{remark}{Remark}
\providecommand{\keywords}[1]
{\textbf{\textsf{Keywords.}} #1}
\title{\Large \sffamily\bfseries Automated tight Lyapunov analysis for first-order methods}
\author{Manu Upadhyaya$^{\star}$ \and Sebastian Banert$^{\star}$ \and Adrien B. Taylor$^{\dagger}$ \and Pontus Giselsson$^{\star}$}
\date{%
    $^{\star}$Department of Automatic Control\\%
    Lund University, Lund, Sweden\\%
    \{\href{mailto:manu.upadhyaya@control.lth.se}{manu.upadhyaya}, \href{mailto:sebastian.banert@control.lth.se}{sebastian.banert}, \href{mailto:pontus.giselsson@control.lth.se}{pontus.giselsson}\}@control.lth.se\\[2ex]%
    $^{\dagger}$INRIA \& D.I. \'Ecole Normale Supérieure\\
    CNRS \& PSL Research University, Paris, France\\
    \href{mailto:adrien.taylor@inria.fr}{adrien.taylor@inria.fr}
}
\begin{document}
\maketitle

\begin{abstract}
    \noindent 
We present a methodology for establishing the existence of quadratic Lyapunov inequalities for a wide range of first-order methods used to solve convex optimization problems. In particular, we consider i) classes of optimization problems of finite-sum form with (possibly strongly) convex and possibly smooth functional components, ii) first-order methods that can be written as a linear system in state-space form in feedback interconnection with the subdifferentials of the functional components of the objective function, and iii) quadratic Lyapunov inequalities that can be used to draw convergence conclusions. \Update{We present a necessary and sufficient condition for the existence of a quadratic Lyapunov inequality within a predefined class of Lyapunov inequalities, which amounts to solving a small-sized semidefinite program.} We showcase our methodology on several first-order methods that fit the framework. Most notably, our methodology allows us to significantly extend the region of parameter choices that allow for duality-gap convergence in the Chambolle--Pock method when the linear operator is the identity mapping.  
\end{abstract}

\keywords{Performance estimation, convex optimization, first-order methods, quadratic constraints, Lyapunov functions, semidefinite programming}

\section{Introduction}
First-order methods are used to solve optimization problems and can be analyzed via Lyapunov inequalities. Such inequalities consist of a Lyapunov function that is nonincreasing from one iteration to the next and a residual function that quantifies a lower bound on the potential decrease. The traditional approach of establishing a Lyapunov inequality, which is typically done on a case-by-case basis, amounts to combining and rearranging algorithm update equations and inequalities that describe properties of the objective function. In this paper, we develop an automated methodology for finding Lyapunov inequalities that can be applied to a large class of first-order methods.

The methodology uses an algorithm representation that covers most first-order methods with fixed parameters. The structure of the algorithm representation is a linear system in state-space form in feedback interconnection with a nonlinearity, in our case, the subdifferentials of the functional components of the objective function. Such representations are common in the automatic control literature \cite{Essentials1998Zhou} and have previously been used for algorithm analysis, e.g., in \cite{Lessard_2016}. The algorithm representation is also closely connected to the operator splitting framework introduced in \cite{Morin2022Frugal}. Different algorithms are obtained by instantiating the matrices that define the linear system. Some matrix choices lead to algorithms that cannot solve the optimization problem in general. A contribution of this paper is that we provide conditions on the matrices that are necessary and sufficient for the equivalence between solving an instance of the optimization problem and finding a fixed point of the algorithm.

Our methodology is based on a necessary and sufficient condition for the existence of a quadratic Lyapunov inequality \Update{within a predefined class of Lyapunov inequalities}. 
At the core of the methodology is a necessary and sufficient condition, in terms of a semidefinite program, for the optimal value of a quadratic objective function to be nonpositive when optimized over all possible algorithm iterates, fixed points, subgradients, and function values and over the full function class under consideration. This result is applied to the three conditions that we use to define a quadratic Lyapunov inequality. The resulting semidefinite program is feasible if and only if such a quadratic Lyapunov inequality exists, and it provides associated Lyapunov functions and residual functions when feasible.

Other methodologies that analyze optimization algorithms using semidefinite programs are the performance estimation problem (PEP) methodology~\cite{Drori_2014,Taylor_Hendrickx_Glineur_2017a} and the integral quadratic constraints (IQC) methodology~\cite{Lessard_2016}. The PEP methodology poses the problem of finding a worst-case function from a predefined class of functions for the algorithm under consideration as an optimization problem. This is then reformulated in a sequence of steps to arrive at a semidefinite program. The PEP methodology, first presented in~\cite{Drori_2014}, has been extended in a sequence of works that guarantee tightness in each step of the reformulation~\cite{Taylor_Hendrickx_Glineur_2017a,Taylor_Hendrickx_Glineur_2017b}, has been adapted as a tool for Lyapunov analysis~\cite{TaylorBach_2019,TaylorLessard_2018,Celine_cont_time}, and extended to monotone inclusion problems~\cite{Ryu_2020}. \Update{The IQC methodology is based on integral quadratic constraints from the control literature \cite{Megretski1997System}, which has been adopted for automated convergence analysis of first-order methods under various settings \cite{Lessard_2016,vanscoy2021speedrobustness,Lessard_Dissipativity_2022}. The IQC methodology uses a simple algorithm representation but lacks tightness guarantees.} We are inspired by the strengths of both methodologies; the worst-case analysis and tightness guarantees of PEP and the simple algorithm representation of IQC. Another work that is inspired by the PEP and IQC frameworks for tight Lyapunov function analysis is~\cite{TaylorLessard_2018}. Our framework is more general as it can be applied to a wider range of algorithms, allowing, e.g., for proximal operators, and can be used to derive a broader range of convergence results.

The proposed methodology is applied in two ways. First, to find the smallest possible linear convergence rate \Update{via quadratic Lyapunov inequalities} for the algorithm at hand. This is done via a bisection search over the convergence rate $\rho\in[0,1[$. Second, to find the range of algorithm parameters for which the Lyapunov analysis can guarantee function-value convergence or duality-gap convergence. The algorithms we consider are the Douglas--Rachford method~\cite{Douglas_Rachford_numerical_1956,Lions_Mercier_SplittingAlgorithmsSum_1979}, the (proximal) gradient method with heavy-ball momentum~\cite{Polyak1987Introduction,Ghadimi2015Global}, the three-operator splitting method by Davis and Yin \cite{Davis2017AThreeOperator}, and the Chambolle--Pock method \cite{Chambolle_Pock_ergodic_2011}.

For the Douglas--Rachford method, we recover some of the known tight linear convergence rate results in \cite{Giselsson2017Linear,Giselsson2017Tight,Ryu_2020}. For the gradient method with heavy-ball momentum, we improve, compared to \cite{Ghadimi2015Global}, the linear convergence rate, and also extend the range of parameters that guarantee convergence in function-value suboptimality. We also show convergence of the duality gap for two proximal gradient methods with heavy-ball momentum. For the three-operator splitting method by Davis and Yin, we provide linear convergence rate results that improve the ones found in \cite{Davis_Yin_3op_arxiv,pmlr-v80-pedregosa18a}. More strikingly, our methodology allows us to significantly enlarge the range of parameters that give duality-gap convergence for the Chambolle--Pock method when the linear operator is assumed to be the identity operator. Traditional proofs, such as in \cite{Chambolle_Pock_ergodic_2011}, allow for proximal operator step-size parameters $\tau_1,\tau_2>0$ to satisfy $\tau_1\tau_2<1$ and the coefficient $\theta$ for the linear combination of previous iterates to satisfy $\theta=1$. Our analysis allows for a significantly wider range of parameter values, e.g., for $\theta=1$ we allow for $\tau_1=\tau_2\in]0,1.15]$, for $\theta=0.35$ we allow for $\tau_1=\tau_2\in]0,1.5]$, and for $\tau_1=\tau_2=0.5$, we allow for $\theta\in[0.03,7.5]$. We also demonstrate, through our methodology, that the extended range of parameters can lead to improved linear convergence rates over the traditional parameter choices.

The paper is organized as follows: in Section~\ref{sec:problem_and_algorithm}, we introduce the problem class and the algorithm representation. Section~\ref{sec:interpolation} discusses interpolation results and frames them in our setting. We define the notion of a quadratic Lyapunov inequality in Section~\ref{sec:Lyapunov}. Section~\ref{sec:main} contains the main result on the existence of a quadratic Lyapunov inequality. Section~\ref{sec:numerical_examples} contains numerical examples and Section~\ref{sec:lemma_1_proof} contains a proof of our core result. Section~\ref{sec:conclusions} contains the main conclusions of this work and discusses future work.

An implementation of the methodology and additional numerical examples can be found at:
\begin{center}
    \url{https://github.com/ManuUpadhyaya/TightLyapunovAnalysis}
\end{center}

\subsection{Preliminaries}
\label{sec:prelim}

Let $\naturals$ denote the set of nonnegative integers, $\mathbb{Z}$ the set of integers, $\llbracket n,m \rrbracket = \set{l \in \mathbb{Z} \xmiddle\vert n \leq l \leq m}$ the set of integers inclusively between the integers $n$ and $m$,  $\reals$ the set of real numbers, $\reals_+$ the set of nonnegative real numbers, $\reals_{++}$ the set of positive real numbers, $\reals^n$ the set of all $n$-tuples of elements of $\reals$, $\reals^{m\times n}$ the set of real-valued matrices of size $m\times n$, if $M\in\reals^{m\times n}$ then $[M]_{i,j}$ the $i,j$-th element of $M$, $\sym^{n}$ the set of symmetric real-valued matrices of size $n\times n$, and $\sym_+^n\subseteq \sym^{n}$ the set of positive semidefinite real-valued matrices of size $n\times n$. $\mathbf{1}$ denotes the column vector of all ones, where the size will be clear from the context.

Throughout this paper, $(\calH,\inner{\cdot}{\cdot})$ will denote a real Hilbert space. All norms $\norm{\cdot}$ are canonical norms where the inner product will be clear from the context. We denote the identity mapping $x \mapsto x$ on $\calH$ by $\Id$. Given a function $f:\calH\rightarrow\reals\cup\{+\infty\}$, the \emph{effective domain} of $f$ is the set $\dom f = \left\{x\in\calH \xmiddle\vert f(x)<+\infty\right\}$. The function $f$ is said to be \emph{proper} if $\dom f \neq \emptyset$. The \emph{subdifferential} of a proper function $f$ is the set-valued operator $\partial f:\calH \rightarrow 2^{\calH}$ defined as the mapping $x \mapsto \set{u \in \calH \xmiddle\vert \forall y \in \calH,\, f(y) \geq f(x) + \inner{u}{y-x} }$.

Let $f\colon \calH \to \reals \cup \{+\infty\}$ and $\sigma,\beta\in\reals_{+}$. The function $f$ is 
\begin{itemize}
    \item[(i)] \emph{convex} if $f\p{\p{1 - \lambda} x + \lambda y} \leq \p{1 - \lambda} f\p{x} + \lambda f\p{y}$ for each $x, y \in \calH$ and $0 \leq \lambda \leq 1$,
    \item[(ii)] \emph{$\sigma$-strongly convex} if $f$ is proper and $f-(\sigma/2)\norm{\cdot}^{2}$ is convex, and
    \item[(iii)] \emph{$\beta$-smooth} if $f$ is differentiable and $\norm{\nabla f(x) - \nabla f(y)} \leq \beta \norm{x - y}$ for each $x,y\in\calH$.
\end{itemize}

Let $0 \leq \sigma < \beta \leq +\infty$. We let $\mathcal{F}_{\sigma,\beta}$ denote the class of all functions $f:\calH\rightarrow\reals\cup\{+\infty\}$ that are
\begin{itemize}
    \item[(i)] $\beta$-smooth and $\sigma$-strongly convex if $\beta < +\infty$, and
    \item[(ii)] lower semicontinuous and $\sigma$-strongly convex if $\beta = +\infty$.
\end{itemize}

Let $f:\calH\to \reals \cup \{+\infty\}$ be proper, lower semicontinuous and convex, and let $\gamma>0$. Then the \emph{proximal operator} $\prox_{\gamma f} : \calH \to \calH $ is defined as the single-valued operator given by
\begin{align*}
    \prox_{\gamma f}(x) = \argmin_{z\in\calH}\p{f(z) + \frac{1}{2\gamma}\norm{x-z}^2} 
\end{align*}
for each $x\in\calH$. If $x,p\in\calH$, then $p = \prox_{\gamma f}(x)$ $\Leftrightarrow$ $\gamma^{-1}\p{x-p} \in \partial f (p)$. Moreover, the \emph{conjugate} of $f$, denoted $f^{*}:\calH\to \reals \cup \{+\infty\}$, is the proper, lower semicontinuous and convex function given by $f^{*}(u) = \sup_{x\in\calH}\p{\inner{u}{x} - f(x)}$ for each $u\in\calH$. If $x,u\in\calH$, then $u \in \partial f(x) $ $\Leftrightarrow$ $x \in \partial f^{*}(u)$ \cite[Theorem 16.29]{Bauschke_Combettes_2017}.

Given any positive integer $n$, we let the inner product $\inner{\cdot}{\cdot}$ on $\calH^{n}$ be given by $\inner{\bz_1}{\bz_2}=\sum_{j=1}^{n}\inner{z_1^{(j)}}{z_2^{(j)}}$ for each $\bz_i=\p{z_{i}^{(1)},\ldots,z_{i}^{(n)}}\in\calH^{n}$ and $i\in\llbracket 1,2\rrbracket$. If $M\in \reals^{m\times n}$, we define the tensor product $M\kron \Id$ to be the mapping $(M\kron \Id):\calH^{n}\rightarrow\calH^{m}$ such that 
\begin{align*}
  (M\kron\Id)\bz = \left(\sum_{j=1}^n[M]_{1,j}z^{(j)},\ldots,\sum_{j=1}^n[M]_{m,j}z^{(j)}\right)
\end{align*}
for each $\bz=\p{z^{(1)},\ldots,z^{(n)}}\in\calH^n$. The adjoint satisfies $(M\kron\Id)^*=M^{\top}\kron\Id$. If $N\in\reals^{n\times l}$, the composition rule $(M\kron\Id)\circ(N\kron\Id)=(MN)\kron\Id$ holds. Moreover, if $M\in\reals^{n\times n}$ is invertible, then $(M\kron \Id)^{-1} = M^{-1}\kron \Id$ holds.

If we let $M_{1}\in\reals^{m\times n_{1}}$ and $M_{2}\in\reals^{m\times n_{2}}$, the relations above imply that $\inner{(M_1\kron\Id)\bz_{1}}{(M_2\kron\Id)\bz_{2}}=\inner{\bz_{1}}{\p{\p{M_1^{\top}M_2}\kron\Id}\bz_{2}}$ for each $\bz_1\in\calH^{n_1}$ and $\bz_2\in\calH^{n_2}$. We define the mapping\footnote{We use the same symbol $\mathcal{Q}$ for the mapping independent of the dimension $n$, which will be clear from context.}
$\mathcal{Q}:\sym^{n}\times \calH^{n} \rightarrow \reals$ by $\quadform{M}{\bz}=\inner{\bz}{(M\kron\Id)\bz}$ for each $M\in\sym^{n}$ and $\bz\in\calH^{n}$. Note that, if $M\in\sym^{n}$, $N\in\reals^{n\times m}$ and $\bz\in\calH^{m}$, then $\quadform{M}{(N\kron\Id)\bz}=\quadform{N^{\top}MN}{\bz}$.  

\section{Problem class and algorithm representation}\label{sec:problem_and_algorithm}
In this section, we introduce the problem class and the algorithm representation. We provide conditions for when solving a problem is equivalent to finding a fixed point of an algorithm. We also provide conditions for when an algorithm can be implemented using scalar multiplications, vector additions, proximal operator evaluations, and gradient evaluations only. We conclude the section by listing a few examples of first-order methods that fit into the algorithm representation. 

\subsection{Problem class}

Let $0\leq \sigma_{i} < \beta_{i} \leq +\infty$ for each $i\in\llbracket1,m\rrbracket$. Consider the convex optimization problem
\begin{equation}
\begin{aligned}\label{eq:the_problem}
& \underset{y\in\calH}{\text{minimize}} & & \sum_{i=1}^{m}f_{i}(y)
\end{aligned}
\end{equation}
where $f_i\in\mathcal{F}_{\sigma_i,\beta_i}$ for each $i\in\llbracket1,m\rrbracket$. Most first-order methods are limited to solving the related inclusion problem
\begin{align}
\label{eq:the_problem_inclusion}
\text{find}\  y\in\calH\ \text{ such that }\ 0\in\sum_{i=1}^m\partial f_i(y).
\end{align}
A solution to~\eqref{eq:the_problem_inclusion} is always a solution to~\eqref{eq:the_problem} and the converse holds under some appropriate constraint qualification, e.g., see~\cite{Bot_2010}. Moreover, it is reasonable to only consider problems such that the inclusion problem~\eqref{eq:the_problem_inclusion} is solvable, i.e., there exists at least one point $y\in\calH$ such that $0\in\sum_{i=1}^m\partial f_i(y)$. Thus, the problem class we consider is all solvable problems of the form \eqref{eq:the_problem_inclusion} where $f_i\in\mathcal{F}_{\sigma_i,\beta_i}$ for each $i\in\llbracket1,m\rrbracket$. \Update{For examples of problems that can be modeled according to~\eqref{eq:the_problem} or~\eqref{eq:the_problem_inclusion}, we refer to the textbooks \cite{Beck_book_2017,Nesterov_LecturesConvexOptimization_2018}.}

For later convenience, we introduce the notation 
\begin{align*}
    \zer \p{ \sum_{i=1}^m\partial f_i } = \set{y \in \calH \xmiddle\vert 0\in\sum_{i=1}^m\partial f_i(y)}.
\end{align*}
That is, $\zer \p{ \sum_{i=1}^m\partial f_i }$ is the set of zeros of the set-valued operator $\sum_{i=1}^m\partial f_i:\calH \rightarrow 2^{\calH}:y\mapsto \sum_{i=1}^m\partial f_i(y)$, which is the same as the set of solutions to~\eqref{eq:the_problem_inclusion}.

\subsection{Algorithm representation}\label{subsec:state_space}

We consider algorithms that solve \eqref{eq:the_problem_inclusion} that can be represented as a discrete-time linear system in state-space form in feedback interconnection with the potentially nonlinear and set-valued subdifferentials that define the problem. In particular, let $\bfcn:\calH^m\to(\reals\cup\{+\infty\})^m$ and $\bm{\partial}\bfcn:\calH^m\to 2^{\calH^m}$ be mappings containing all functions and subdifferentials associated with \eqref{eq:the_problem} and \eqref{eq:the_problem_inclusion} that satisfy
\begin{align}
    \label{eq:product_f}\bfcn(\by)&=\p{f_1\p{y^{(1)}},\ldots,f_{m}\p{y^{(m)}}},\\
    \label{eq:product_f_subdiff}\bm{\partial}\bfcn(\by)&=\prod_{i=1}^{m}\partial f_i\p{y^{(i)}}
\end{align}
for each $\by=\p{y^{(1)},\ldots,y^{(m)}}\in\calH^{m}$, respectively\footnote{In this context, the symbol $\prod$ is used for Cartesian products.}. We consider algorithms that can be written as: pick an initial $\bx_{0}\in\calH^{n}$ and let
\begin{equation}\label{eq:linear_system_with_nonlinearity}
    \begin{aligned}
        &\text{for } k = 0,1,\ldots \\
        &\left\lfloor
        \begin{aligned}
            &\bx_{k+1} = \p{A \kron \Id} \bx_{k} + \p{B \kron \Id} \bu_{k}, \\
            &\by_{k} =  \p{C \kron \Id} \bx_{k} + \p{D \kron \Id} \bu_{k}, \\
            &\bu_{k} \in \bm{\partial f}(\by_{k}),\\
            &\bFcn_{k} = \bfcn(\by_{k}),
        \end{aligned} \right.
    \end{aligned}
\end{equation}
where $\bx_{k}\in\calH^n$, $\bu_{k}\in\calH^m$, $\by_{k}\in\calH^m$, and $\bFcn_{k}\in\reals^m$ are the algorithm variables and
\begin{align*}
A&\in\reals^{n\times n},& B&\in\reals^{n\times m},& C&\in\reals^{m\times n},& D&\in\reals^{m\times m}
\end{align*}
are fixed matrices containing the parameters of the method at hand. For clarification, individual subgradients and function values are calculated as $u_k^{(i)}\in\partial f_i\p{y_k^{(i)}}$ and $\bFcn_k^{(i)}=f_i\p{y_k^{(i)}}$, respectively, so that $\bu_k=\p{u_k^{(1)},\ldots,u_k^{(m)}}$ and $\bFcn_k=\p{f_1\p{y_k^{(1)}},\ldots,f_m\p{y_k^{(m)}}}$. Moreover, \Update{representation~\eqref{eq:linear_system_with_nonlinearity} is a tool for analysis and does not necessarily indicate an efficient implementation, e.g.,} the function values are not used in the algorithm but are needed for the Lyapunov analysis. The structure in \eqref{eq:linear_system_with_nonlinearity} of a linear system in feedback interconnection with a nonlinearity is common in the automatic control literature and has previously been proposed in \cite{Lessard_2016,Zhao_Lessard_Udell_2021} as a model for algorithm analysis. It can represent a wide range of first-order methods as seen in Section~\ref{sec:alg_examples}.

Algorithm~\eqref{eq:linear_system_with_nonlinearity} searches for a \emph{fixed point} $(\bx_\star,\bu_\star,\by_\star,\bFcn_\star)\in\calH^n\times\calH^m\times\calH^m\times\reals^m$ satisfying the fixed-point equations
\begin{equation}\label{eq:fixed_point}
    \begin{aligned} 
        \bx_{\star} &= \p{A \kron \Id} \bx_{\star} + \p{B \kron \Id} \bu_{\star}, \\
        \by_{\star} &=  \p{C \kron \Id} \bx_{\star} + \p{D \kron \Id} \bu_{\star}, \\
        \bu_{\star} &\in \bm{\partial f}(\by_{\star}),\\
        \bFcn_{\star} &= \bfcn(\by_{\star}),
    \end{aligned} 
\end{equation}
from which we want to recover a solution to~\eqref{eq:the_problem_inclusion}. In particular, we want the problem of finding a fixed point of \eqref{eq:linear_system_with_nonlinearity} to be equivalent to solving \eqref{eq:the_problem_inclusion}.

\subsection{Solutions and fixed points}\label{sec:fixed-point_encodings}

There are choices of the matrices $A$, $B$, $C$, and $D$ such that it is not possible to extract a solution of~\eqref{eq:the_problem_inclusion} from fixed points of~\eqref{eq:linear_system_with_nonlinearity} in any practical way. To exclude such algorithms, we add the requirement that fixed points should satisfy
\begin{align}
\by_{\star}&=(y_\star,\ldots,y_\star), &0&=\sum_{i=1}^m u_{\star}^{(i)} 
\label{eq:sol_from_fp}
\end{align}
for some $y_\star\in\calH$, where $\bu_{\star}=\p{u_{\star}^{(1)},\ldots,u_{\star}^{(m)}}\in\calH^{m}$. This implies that $y_\star$ solves \eqref{eq:the_problem_inclusion} since the fixed-point equations \eqref{eq:fixed_point} give that
\begin{align*}
    0=\sum_{i=1}^m u_{\star}^{(i)}\in\sum_{i=1}^m\partial f_i(y_\star).
\end{align*}
We say that such fixed points are {\emph{fixed-point encodings}} in line with the terminology in \cite{Ryu2020Uniqueness}. By defining the set of fixed points as
\begin{align*}
    \Omega_{\textup{fixed points}}\p{f_1,\ldots,f_m} = \set{ (\bx_\star,\bu_\star,\by_\star,\bFcn_\star)\in\calH^n\times\calH^m\times\calH^m\times\reals^m \xmiddle\vert \eqref{eq:fixed_point}\text{ holds}}
\end{align*}
and the set fixed-point encodings as
\begin{align*}
    \Omega_{\textup{fixed-point encodings}}\p{f_1,\ldots,f_m} = \set{ (\bx_\star,\bu_\star,\by_\star,\bFcn_\star)\in\calH^n\times\calH^m\times\calH^m\times\reals^m \xmiddle\vert \eqref{eq:fixed_point}\text{ and }\eqref{eq:sol_from_fp}\text{ hold}},
\end{align*}
the requirement that all fixed points are fixed-point encodings can be written as
$\Omega_{\textup{fixed points}}\p{f_1,\ldots,f_m}=\Omega_{\textup{fixed-point encodings}}\p{f_1,\ldots,f_m}$. Another requirement is that to each solution 
 of \eqref{eq:the_problem_inclusion}, there exists a corresponding fixed point. These two requirements imply that solving \eqref{eq:the_problem_inclusion} is equivalent to finding a fixed point of the algorithm. We say that such algorithms have the {\emph{fixed-point encoding property}}.

\begin{definition}[\normalfont Fixed-point encoding property]\label{def:FP_E_property}
    We say that algorithm~\eqref{eq:linear_system_with_nonlinearity} has the \emph{fixed-point encoding property} if 
    \begin{align}\label{eq:in_def_all_sols_have_fp_encodings}
        y_{\star} \in \zer \p{ \sum_{i=1}^m\partial f_i } \implies \exists \p{\bx_{\star},\bu_{\star},\bFcn_{\star}}\in\calH^n\times\calH^m\times\reals^m \text{ such that } \eqref{eq:fixed_point}\text{ and }\eqref{eq:sol_from_fp}\text{ hold},
    \end{align}
    and
    \begin{align}\label{eq:in_def_all_fps_are_fp_encodings}
        \Omega_{\textup{fixed points}}\p{f_1,\ldots,f_m} = \Omega_{\textup{fixed-point encodings}}\p{f_1,\ldots,f_m}
    \end{align}
    for each $\p{f_{1},\ldots,f_m}\in\prod_{i=1}^{m}\mathcal{F}_{\sigma_{i},\beta_{i}}$.
\end{definition}

By appropriately restricting $A$, $B$, $C$, and $D$, we can exactly capture the class of algorithms with this property. For $m\geq 2$, let 
 \begin{align}\label{eq:SumToZeroMat_def}
    \SumToZeroMat = 
        \begin{bmatrix}
            I\\
            -\mathbf{1}^{\top}
        \end{bmatrix}\in\reals^{m\times (m-1)}
        \quad{\text{ and }}\quad  
        \hat{\bu}_\star&=\p{u_{\star}^{(1)},\ldots,u_{\star}^{(m-1)}}.
    \end{align}
The fixed-point encoding condition in \eqref{eq:sol_from_fp} is then equivalent to $0=\p{\SumToZeroMat^{\top}\kron\Id}\by_\star$ and $\bu_{\star}=\p{\SumToZeroMat\kron\Id}\hat{\bu}_\star$. In the case $m=1$, the fixed-point encoding condition is simply $\bu_{\star}=0$. The matrix $\SumToZeroMat$ enters in the restriction of $A$, $B$, $C$, and $D$ to exactly capture the class of algorithms with the fixed-point encoding property.

\begin{assumption}\label{ass:ABCD_solution_and_fixed_point_condition}
    Suppose that 
    \begin{align}\label{eq:ABCD_range_condition}
        \mathrm{ran}
        \begin{bmatrix}
            B\SumToZeroMat & 0\\
            D\SumToZeroMat & -\mathbf{1}   
        \end{bmatrix}
        \subseteq
        \mathrm{ran}
        \begin{bmatrix}
            I-A\\
            -C   
        \end{bmatrix}
    \end{align}
    with the interpretation that the block column containing $\SumToZeroMat$ is removed when $m=1$, and that
    \begin{align}\label{eq:ABCD_null_condition}
        \mathrm{null}
        \begin{bmatrix}
            I-A \;\;&\;\;  -B
        \end{bmatrix}
        \subseteq
        \mathrm{null}
        \begin{bmatrix}
            \SumToZeroMat^{\top} C & \SumToZeroMat^{\top} D \\
            0 & \mathbf{1}^{\top}
        \end{bmatrix},
    \end{align}
    with the interpretation that the block row containing $\SumToZeroMat^{\top}$ is removed when $m=1$. 
\end{assumption}

\begin{proposition}\label{prp:solution_and_fixed_point}
    The following are equivalent:
    \begin{itemize}
        \item[(i)] Assumption~\ref{ass:ABCD_solution_and_fixed_point_condition} holds.
        \item[(ii)] Algorithm~\eqref{eq:linear_system_with_nonlinearity} has the fixed-point encoding property.
    \end{itemize}
\end{proposition}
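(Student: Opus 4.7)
The plan is to prove the equivalence by matching each of the two inclusions in Assumption~\ref{ass:ABCD_solution_and_fixed_point_condition} with one of the two clauses of Definition~\ref{def:FP_E_property}: the range condition~\eqref{eq:ABCD_range_condition} will correspond to clause~\eqref{eq:in_def_all_sols_have_fp_encodings} (existence of a fixed-point encoding for every solution), and the null condition~\eqref{eq:ABCD_null_condition} will correspond to clause~\eqref{eq:in_def_all_fps_are_fp_encodings} (every fixed point is a fixed-point encoding). A key preliminary observation, which I would state and use implicitly throughout, is that for any real matrices $M_1,M_2$ of matching row dimensions, $\ran M_1 \subseteq \ran M_2$ if and only if $\ran (M_1\kron \Id) \subseteq \ran(M_2\kron \Id)$, and similarly for null spaces with matching column dimensions; this lifts the finite-dimensional conditions in Assumption~\ref{ass:ABCD_solution_and_fixed_point_condition} directly to the tensored operators appearing in~\eqref{eq:linear_system_with_nonlinearity} and~\eqref{eq:fixed_point}.

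For (i)~$\Rightarrow$~(ii), first I would verify~\eqref{eq:in_def_all_sols_have_fp_encodings}. Given $y_{\star}\in \zer(\sum_{i=1}^m \partial f_i)$, pick $v_i \in \partial f_i(y_\star)$ with $\sum_i v_i = 0$ and set $\by_\star = \mathbf{1} y_\star$, $\bu_\star = (v_1,\ldots,v_m)$, and, for $m\geq 2$, $\hat{\bu}_\star = (v_1,\ldots,v_{m-1})$, noting $\bu_\star = (\SumToZeroMat\kron\Id)\hat{\bu}_\star$ since $v_m = -\sum_{i<m} v_i$. The fixed-point equations~\eqref{eq:fixed_point} for $\bx_\star$ reduce to
\begin{equation*}
\left(\begin{bmatrix} I-A \\ -C \end{bmatrix}\kron \Id\right)\bx_\star = \left(\begin{bmatrix} B\SumToZeroMat & 0 \\ D\SumToZeroMat & -\mathbf{1}\end{bmatrix}\kron \Id\right) (\hat{\bu}_\star, y_\star),
\end{equation*}
which is solvable by~\eqref{eq:ABCD_range_condition} (with the trivial adjustment for $m=1$, where $\bu_\star = 0$). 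Second, for~\eqref{eq:in_def_all_fps_are_fp_encodings}: any fixed point satisfies $((I-A)\kron \Id)\bx_\star - (B\kron\Id)\bu_\star = 0$, so~\eqref{eq:ABCD_null_condition} gives $(\SumToZeroMat^\top\kron\Id)\by_\star = 0$ and $(\mathbf{1}^\top\kron\Id)\bu_\star = 0$. Since $\ker \SumToZeroMat^\top = \mathrm{span}\{\mathbf{1}\}$, the first identity forces $\by_\star = \mathbf{1} y_\star$ for some $y_\star\in\calH$, and the second is the zero-sum condition in~\eqref{eq:sol_from_fp}.

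For (ii)~$\Rightarrow$~(i), I would argue by contraposition, producing counterexamples in the function class $\prod_{i=1}^m \mathcal{F}_{\sigma_i,\beta_i}$ whenever either inclusion in Assumption~\ref{ass:ABCD_solution_and_fixed_point_condition} fails. If~\eqref{eq:ABCD_range_condition} fails, pick $(\hat u, y)\in \reals^{m-1}\times \reals$ producing a vector outside $\ran[I-A;\,-C]$, set $\bu^\circ = \SumToZeroMat \hat u$ and $y^\circ = y$, and build $f_i(s) = u^{\circ,(i)} s + \tfrac{c_i}{2}(s - y^\circ)^2$ with any $c_i\in[\sigma_i,\beta_i]\cap\reals$ (with $c_i=\sigma_i$ if $\sigma_i>0$, $c_i=\beta_i/2$ otherwise, any positive value if $\sigma_i=0,\beta_i=+\infty$). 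Then $f_i\in\mathcal{F}_{\sigma_i,\beta_i}$, $\partial f_i(y^\circ) = \{u^{\circ,(i)}\}$, and $y^\circ$ solves the inclusion (since $\sum u^{\circ,(i)}=0$), yet no $\bx_\star$ exists, violating~\eqref{eq:in_def_all_sols_have_fp_encodings}. If~\eqref{eq:ABCD_null_condition} fails, pick $(x,u)$ in $\ker [I-A,\,-B]$ but not in the kernel of the lower matrix, set $y = Cx + Du$, and define $f_i(s) = u^{(i)} s + \tfrac{c_i}{2}(s - y^{(i)})^2$ as above; then $(x,u,y,\bfcn(y))$ is a fixed point that violates either $\by_\star = \mathbf{1} y_\star$ or $\mathbf{1}^\top \bu_\star = 0$, contradicting~\eqref{eq:in_def_all_fps_are_fp_encodings}.

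The main obstacle is the reverse direction: one has to produce functions in the prescribed class $\mathcal{F}_{\sigma_i,\beta_i}$ that realize a prescribed subgradient at a prescribed point while respecting the strong-convexity and smoothness bounds. The shifted-quadratic construction above handles all parameter regimes uniformly, so the obstacle is ultimately only a matter of choosing $c_i$ carefully when $\sigma_i=0$ or $\beta_i=+\infty$. A second, purely notational, care is ensuring the tensor-product equivalence between the finite-dimensional conditions of Assumption~\ref{ass:ABCD_solution_and_fixed_point_condition} and the corresponding conditions on $\calH^n$; I would dispatch this via the preliminary observation stated at the outset, reducing the counterexamples to the scalar case $\calH=\reals$ without loss of generality.
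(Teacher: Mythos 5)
Your proof is correct and follows essentially the same strategy as the paper's: the range condition is matched to clause~\eqref{eq:in_def_all_sols_have_fp_encodings}, the null condition to clause~\eqref{eq:in_def_all_fps_are_fp_encodings}, and the converse is handled by contraposition using shifted-quadratic counterexamples $f_i(y)=\tfrac{c_i}{2}\norm{y-y^{\circ}}^2+\inner{u^{\circ,(i)}}{y}$ with curvature $c_i\in[\sigma_i,\beta_i]$, exactly as in the paper (where $\delta_i$ plays the role of your $c_i$). The only presentational difference is that you make explicit the lifting of finite-dimensional range/null inclusions to their tensored counterparts on $\calH$, which the paper uses implicitly; this is a harmless and slightly cleaner way to justify picking the scalar witnesses.
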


\begin{proof}
    \textit{(i) $\Rightarrow$ (ii):}    
        Suppose that Assumption~\ref{ass:ABCD_solution_and_fixed_point_condition} holds.
        Let $\p{f_{1},\ldots,f_m}\in\prod_{i=1}^{m}\mathcal{F}_{\sigma_{i},\beta_{i}}$.

        First, we prove that~\eqref{eq:in_def_all_sols_have_fp_encodings} holds.
        Suppose that $y_{\star} \in \zer \p{ \sum_{i=1}^m\partial f_i }$. This implies that there exists a $\bu_{\star}=\p{u_{\star}^{(1)},\ldots,u_{\star}^{(m)}}\in\calH^{m}$ such that 
        \begin{align}\label{eq:in_proof_solution}
            \bu_{\star} &\in \bm{\partial f}\p{\p{\mathbf{1} \kron \Id} y_{\star}} \quad\text{ and }\quad \sum_{i=1}^{m}u_{\star}^{(i)} = 0.
        \end{align}
        Note that the second part of~\eqref{eq:in_proof_solution} implies that 
        \begin{align*}
            \bu_{\star} =
            \begin{cases}
                \p{\SumToZeroMat\kron\Id} \hat{\bu}_{\star} & \text{if } m>1, \\
                0 & \text{if } m=1,
            \end{cases}
        \end{align*}
        for $\hat{\bu}_{\star}=\p{u_{\star}^{(1)},\ldots,u_{\star}^{(m-1)}}\in\calH^{m-1}$, where $\SumToZeroMat$ is defined in~\eqref{eq:SumToZeroMat_def}.
        We will show that there exists an $\bx_{\star}\in\calH^{n}$ such that 
        \begin{equation}\label{eq:fixed_point_from_solution_1}
            \begin{aligned} 
                \bx_{\star} &= \p{A \kron \Id} \bx_{\star} + \p{B \kron \Id} \bu_{\star}, \\
                \p{\mathbf{1} \kron \Id} y_{\star} &=  \p{C \kron \Id} \bx_{\star} + \p{D \kron \Id} \bu_{\star},
            \end{aligned} 
        \end{equation}
        i.e., 
        \begin{align}\label{eq:fixed_point_from_solution_2}
            \p{\bx_{\star}, \bu_{\star}, \p{\mathbf{1} \kron \Id} y_{\star}, \bfcn\p{\p{\mathbf{1} \kron \Id} y_{\star}}}
        \end{align}
        is a fixed-point encoding. 
        This will prove the desired implication.
        Note that \eqref{eq:fixed_point_from_solution_1} is equivalent to  
        \begin{align*}
            \p{
            \begin{bmatrix}
                B\SumToZeroMat & 0 \\
                D\SumToZeroMat & -\mathbf{1} 
            \end{bmatrix} \kron \Id
            } \p{\hat{\bu}_{\star}, y_{\star}}
            =
            \p{
            \begin{bmatrix}
                I-A \\
                -C 
            \end{bmatrix} \kron \Id
            } \bx_{\star},
        \end{align*}
        with the interpretation that $\hat{\bu}_{\star}$ and the block column containing $\SumToZeroMat$ is removed when $m=1$.
        Moreover,~\eqref{eq:ABCD_range_condition} in Assumption~\ref{ass:ABCD_solution_and_fixed_point_condition} implies that there exists a matrix $U\in\reals^{n\times m}$ such that 
        \begin{align*}
            \begin{bmatrix}
                B\SumToZeroMat & 0\\
                D\SumToZeroMat & -\mathbf{1}   
            \end{bmatrix}
            = \begin{bmatrix}
            I-A\\
            -C   
        \end{bmatrix} U,
        \end{align*}
        i.e., each column of the matrix to the left in~\eqref{eq:ABCD_range_condition} can be written as linear combinations of the columns of the matrix to the right in~\eqref{eq:ABCD_range_condition}. 
        If we let $\bx_{\star}=\p{U\kron\Id}\p{\hat{\bu}_{\star}, y_{\star}}$, then we get 
        \begin{align*}
            \p{
            \begin{bmatrix}
                B\SumToZeroMat & 0 \\
                D\SumToZeroMat & -\mathbf{1} 
            \end{bmatrix} \kron \Id
            } \p{\hat{\bu}_{\star}, y_{\star}}
            &=
            \p{\p{
            \begin{bmatrix}
                I-A \\
                -C 
            \end{bmatrix}U} \kron \Id
            } \p{\hat{\bu}_{\star}, y_{\star}} \\
            &=
            \p{
            \begin{bmatrix}
                I-A \\
                -C 
            \end{bmatrix} \kron \Id
            } \p{U \kron \Id } \p{\hat{\bu}_{\star}, y_{\star}} \\
            &=
            \p{
            \begin{bmatrix}
                I-A \\
                -C 
            \end{bmatrix} \kron \Id
            } \bx_{\star},
        \end{align*}
        as desired.
        
        Second, we prove that~\eqref{eq:in_def_all_fps_are_fp_encodings} holds.
        Note that the inclusion $\supseteq$ holds trivially. Therefore, we only need to prove the $\subseteq$ inclusion. Suppose that $(\bx_\star,\bu_\star,\by_\star,\bFcn_\star)\in\Omega_{\textup{fixed points}}\p{f_1,\ldots,f_m}$. This implies that 
        \begin{align*}
            \p{
            \begin{bmatrix}
                I-A \;\;&\;\;  -B
            \end{bmatrix}
            \kron \Id 
            } \p{\bx_\star, \bu_\star} = 0.
        \end{align*}
        However,~\eqref{eq:ABCD_null_condition} in Assumption~\ref{ass:ABCD_solution_and_fixed_point_condition} implies that
        \begin{align*}
            \p{
            \begin{bmatrix}
                \SumToZeroMat^{\top} C & \SumToZeroMat^{\top} D \\
                0 & \mathbf{1}^{\top}
            \end{bmatrix}
            \kron \Id 
            } \p{\bx_\star, \bu_\star} = 0,
        \end{align*}
        with the interpretation that the block row containing $\SumToZeroMat^{\top}$ is removed when $m=1$. In particular, note that this implies that 
        \begin{align*}
            y_{\star}^{(1)} = \ldots = y_{\star}^{(m)} = y_{\star}, \quad\text{ and }\quad \sum_{i=1}^{m}u_{\star}^{(i)} = 0,
        \end{align*}
        where $\bu_{\star}=\p{u_{\star}^{(1)},\ldots,u_{\star}^{(m)}}$ and $\by_\star = \p{y_{\star}^{(1)}, \ldots, y_{\star}^{(m)}}$, for some common value $y_{\star}\in\calH$, since $\by_{\star} = \p{C \kron \Id} \bx_{\star} + \p{D \kron \Id} \bu_{\star}$ (and $\p{\SumToZeroMat^{\top}\kron \Id}\by_{\star}=\p{y_{\star}^{(1)} - y_{\star}^{(m)}, \ldots, y_{\star}^{(m-1)}-y_{\star}^{(m)}}$ if $m>1$). In particular, $(\bx_\star,\bu_\star,\by_\star,\bFcn_\star)\in\Omega_{\textup{fixed-point encodings}}\p{f_1,\ldots,f_m}$.
        This proves the $\subseteq$ inclusion for~\eqref{eq:in_def_all_fps_are_fp_encodings}. 
        
    \textit{(ii) $\Rightarrow$ (i):} 
        We prove the contrapositive.
        
        First, suppose that~\eqref{eq:ABCD_range_condition} does not hold, i.e., there exists $\p{y_{\star},\hat{\bu}_{\star}}\in\calH\times\calH^{m-1}$ such that 
        \begin{align}\label{eq:in_proof_x_does_not_exist}
            \p{
            \begin{bmatrix}
                I-A \\
                -C 
            \end{bmatrix} \kron \Id
            } \bx_{\star}
            =
            \p{
            \begin{bmatrix}
                B\SumToZeroMat & 0 \\
                D\SumToZeroMat & -\mathbf{1} 
            \end{bmatrix} \kron \Id
            } \p{\hat{\bu}_{\star}, y_{\star}}
        \end{align}
        does not hold for any $\bx_{\star}\in\calH^{n}$. Define $\bu_{\star}=\p{u_{\star}^{(1)},\ldots,u_{\star}^{(m)}}\in\calH^{m}$ such that
        \begin{align*}
            \bu_{\star} =
            \begin{cases}
                \p{\SumToZeroMat\kron\Id} \hat{\bu}_{\star} & \text{if } m>1, \\
                0 & \text{if } m=1,
            \end{cases}
        \end{align*}
        and note that $\sum_{i=1}^{m}u_{\star}^{(i)} = 0$ holds by construction. Note that~\eqref{eq:in_proof_x_does_not_exist} then implies that  
        \begin{equation*}
            \begin{aligned} 
                \bx_{\star} &= \p{A \kron \Id} \bx_{\star} + \p{B \kron \Id} \bu_{\star}, \\
                \p{\mathbf{1} \kron \Id} y_{\star} &=  \p{C \kron \Id} \bx_{\star} + \p{D \kron \Id} \bu_{\star}
            \end{aligned} 
        \end{equation*}
        does not hold for any $\bx_{\star}\in\calH^{n}$. Thus, if we can show that there exists $\p{f_{1},\ldots,f_m}\in\prod_{i=1}^{m}\mathcal{F}_{\sigma_{i},\beta_{i}}$ such that $\bm{\partial f}\p{\p{\mathbf{1} \kron \Id} y_{\star}} = \set{\bu_{\star}}$ holds, then we are done since this shows that there exists $y_{\star} \in \zer \p{ \sum_{i=1}^m\partial f_i }$ such that the implication in~\eqref{eq:in_def_all_sols_have_fp_encodings} fails.
        Let $(\delta_{1},\ldots,\delta_{m})\in\prod_{i=1}^{m}[\sigma_i,\beta_i]$ and $f_{i}:\calH\rightarrow\reals$ such that 
        \begin{align*}
            f_{i}(y) = \frac{\delta_{i}}{2}\norm{y - y_{\star}}^{2} + \inner{u_{\star}^{(i)}}{y} 
        \end{align*}
        for each $y\in\calH$ and $i\in\llbracket1,m\rrbracket$. Then $\p{f_{1},\ldots,f_m}\in\prod_{i=1}^{m}\mathcal{F}_{\sigma_{i},\beta_{i}}$ is clear, and $\bm{\partial f}\p{\p{\mathbf{1} \kron \Id} y_{\star}} = \set{\bu_{\star}}$ holds since
        \begin{align*}
            \partial f_{i}(y_{\star}) = \set{u_{\star}^{(i)}}
        \end{align*}
        for each $i\in\llbracket1,m\rrbracket$.
        
        Second, suppose that~\eqref{eq:ABCD_null_condition} does not hold, i.e., there exists $\p{\bx_\star, \bu_\star}\in\calH^{n}\times\calH^{m}$ such that 
        \begin{align*}
            \p{
            \begin{bmatrix}
                I-A \;\;&\;\;  -B
            \end{bmatrix}
            \kron \Id 
            } \p{\bx_\star, \bu_\star} = 0,
        \end{align*}
        but
        \begin{align*}
            \p{
            \begin{bmatrix}
                \SumToZeroMat^{\top} C & \SumToZeroMat^{\top} D \\
                0 & \mathbf{1}^{\top}
            \end{bmatrix}
            \kron \Id 
            } \p{\bx_\star, \bu_\star} \neq 0.
        \end{align*}
        If we let $\bu_{\star}=\p{u_{\star}^{(1)},\ldots,u_{\star}^{(m)}}$ and $\by_{\star} = \p{y_{\star}^{(1)}, \ldots, y_{\star}^{(m)}} = \p{C \kron \Id} \bx_{\star} + \p{D \kron \Id} \bu_{\star}$, then either or both of $y_{\star}^{(1)} = \ldots = y_{\star}^{(m)}$ and $\sum_{i=1}^{m}u_{\star}^{(i)} = 0$ fail. Thus, if we can show that there exists $\p{f_{1},\ldots,f_m}\in\prod_{i=1}^{m}\mathcal{F}_{\sigma_{i},\beta_{i}}$ such that~\eqref{eq:fixed_point} holds, then we are done since this shows that there exists $(\bx_\star,\bu_\star,\by_\star,\bFcn_\star)\in\Omega_{\textup{fixed points}}\p{f_1,\ldots,f_m}$ such that $(\bx_\star,\bu_\star,\by_\star,\bFcn_\star)\notin\Omega_{\textup{fixed-point encodings}}\p{f_1,\ldots,f_m}$. Let $(\delta_{1},\ldots,\delta_{m})\in\prod_{i=1}^{m}[\sigma_i,\beta_i]$ and $f_{i}:\calH\rightarrow\reals$ such that 
        \begin{align*}
            f_{i}(y) = \frac{\delta_{i}}{2}\norm{y - y_{\star}^{(i)}}^{2} + \inner{u_{\star}^{(i)}}{y} 
        \end{align*}
        for each $y\in\calH$ and $i\in\llbracket1,m\rrbracket$. Then $\p{f_{1},\ldots,f_m}\in\prod_{i=1}^{m}\mathcal{F}_{\sigma_{i},\beta_{i}}$, and~\eqref{eq:fixed_point} holds since
        \begin{align*}
            \partial f_{i}(y_{\star}^{(i)}) = \set{u_{\star}^{(i)}}
        \end{align*}
        for each $i\in\llbracket1,m\rrbracket$.
\end{proof}

\begin{remark}
There exist many different choices of $A$, $B$, $C$, and $D$ in \eqref{eq:linear_system_with_nonlinearity} that can represent a given first-order method. The dimension $m$ in $\by\in\calH^m$ is fixed due to the number of functional components in problem~\eqref{eq:the_problem}, but the dimension $n$ in $\bx\in\calH^n$ can vary among representations. In fact, there exists a minimal $n$ such that a given first-order method can be represented as \eqref{eq:linear_system_with_nonlinearity}, leading to a {\emph{minimal representation}}. A necessary condition is that 
\begin{align}
    \rank\begin{bmatrix}I-A \;\;&\;\;  -B\end{bmatrix}=n\qquad{\hbox{and}}\qquad\rank\begin{bmatrix}I-A\\
    -C\end{bmatrix}=n,
    \label{eq:minimal_representation_rank_conditions}
\end{align}
where both matrices appear in Assumption~\ref{ass:ABCD_solution_and_fixed_point_condition}.
If these do not hold, the system is not {\emph{controllable}} (also often called reachable) \cite[Definitions~6.D1]{Chen2013Linear} or {\emph{observable}} \cite[Definitions~6.D2]{Chen2013Linear}, respectively. This implies that the representation is not minimal \cite[Theorem~25.2]{Lectures2023Dahleh} and that it is possible, for instance via a Kalman decomposition \cite[Section~25.2]{Lectures2023Dahleh}, to go from this non-minimal representation to a minimal representation that satisfies \eqref{eq:minimal_representation_rank_conditions} and represents the same algorithm.
\label{rem:rank_n}
\end{remark}

\begin{remark}
\Update{Previously, \cite{Michalowsky_Scherer_Ebenbauer_2021,Sundararajan_Van_Scoy_Lessard_2020} derived necessary and sufficient conditions for the existence of a fixed point from which a solution can be extracted, using algorithm representations different from~\eqref{eq:linear_system_with_nonlinearity}. Note that the existence of a fixed point from which a solution can be extracted differs from the concept of the fixed-point encoding property considered here.}
\end{remark}

\subsection{Well-posedness}\label{sec:well-posedness}

\Update{When analyzing existing algorithms, well-posedness is usually clear from the outset. However, when taking the more abstract point of view, as given by Algorithm~\eqref{eq:linear_system_with_nonlinearity}, further discussion is warranted.} We would like Algorithm~\eqref{eq:linear_system_with_nonlinearity} to be well-posed in the sense that it can be initiated at an arbitrary $\bx_{0}\in\calH^{n}$ and produce an infinite sequence $\{(\bx_k, \bu_k, \by_k, \bFcn_k)\}_{k=0}^{\infty}$ obeying the algorithm dynamics~\eqref{eq:linear_system_with_nonlinearity}. This holds if for each $\bx\in\calH^n$, there exist $\bu\in\calH^m$ and $\by\in\calH^m$ such that
\begin{equation}
\begin{aligned}
\by &= (C\kron\Id)\bx+(D\kron\Id)\bu\\
\bu &\in \bm\partial \bfcn(\by).
\end{aligned}
\label{eq:uy_system}
\end{equation}
In addition, if $\bu\in\calH^m$ and $\by\in\calH^m$ are unique, then the generated sequence is unique.

If $D$ has a lower-triangular structure, \eqref{eq:uy_system} can be solved using back-substitution. If $[D]_{i,i}\neq 0$, an implicit step is needed to find $y^{(i)}$ and $u^{(i)}$. If $[D]_{i,i}<0$, this implicit step is a proximal evaluation, which implies uniqueness. If $[D]_{i,i}=0$, $u^{(i)}$ is found via direct evaluation of $\partial f_i\p{y^{(i)}}$ which is always unique if $f_i$ is differentiable. 

\begin{assumption}\label{ass:well-posedness}
    Let 
    \begin{align*}
        I_{\textup{differentiable}} = \set{i\in\llbracket1,m\rrbracket: \beta_{i} < + \infty} \quad \text{ and } \quad I_{D} = \set{i\in\llbracket1,m\rrbracket: [D]_{i,i}< 0}
    \end{align*}
    and assume that $I_{\textup{differentiable}}\cup I_{D} = \llbracket1,m\rrbracket$
    and 
        $D$ is lower triangular with nonpositive diagonal elements.
\end{assumption}

The requirements in Assumption~\ref{ass:well-posedness} give rise to causal algorithms that generate unique and infinite sequences that evaluate either a proximal operator or a gradient for each $f_i$ and linearly combine results of previous evaluations to form inputs.

\begin{proposition}\label{prp:well-posedness}
    Suppose that Assumption~\ref{ass:well-posedness} holds. Then for any $\p{f_{1},\ldots,f_m}\in\prod_{i=1}^{m}\mathcal{F}_{\sigma_{i},\beta_{i}}$ and $\bx_{0}=\p{x_{0}^{(1)},\ldots,x_{0}^{(n)}}\in\calH^{n}$, algorithm~\eqref{eq:linear_system_with_nonlinearity} produces a unique sequence $\{(\bx_k, \bu_k, \by_k, \bFcn_k)\}_{k=0}^{\infty}$ obeying the algorithm dynamics~\eqref{eq:linear_system_with_nonlinearity} and can be implemented as the following causal procedure:
    \begin{align}\label{eq:explicit_ABCD_update}
        \begin{aligned}
            &\text{for } k = 0,1,\ldots \\
             &\left\lfloor
            \begin{aligned}
                &\text{for } i = 1,\ldots,m \\
                &\left\lfloor
                \begin{aligned}
                &v_k^{(i)} = \sum_{j=1}^{n}[C]_{i,j}x_{k}^{(j)} + \sum_{j=1}^{i-1}[D]_{i,j}u_{k}^{(j)}, \\
                &y^{(i)}_k = 
                \begin{cases}
                    \prox_{-[D]_{i,i}f_{i}}\p{v_k^{(i)}} & \text{if } i \in I_D, \\
                     v_k^{(i)} & \text{if } i \notin I_D,
                \end{cases} \\
                &u^{(i)}_k = 
                \begin{cases}
                    (-[D]_{i,i})^{-1}\p{v_{k}^{(i)}-y_{k}^{(i)}} & \text{if } i \in I_{D}, \\
                     \nabla f_{i}\p{y_{k}^{(i)}} & \text{if } i \notin I_{D},
                \end{cases} \\
                &F_{k}^{(i)} = f_{i}\p{y_{k}^{(i)}},
                \end{aligned}\right. \\
                &\bx_{k+1} =\p{x_{k+1}^{(1)},\ldots,x_{k+1}^{(n)}} = \p{A \kron \Id} \bx_{k} + \p{B \kron \Id} \bu_{k},\\
            \end{aligned}\right.
        \end{aligned}
    \end{align}
    where $\bu_{k}=\p{u_{k}^{(1)},\ldots,u_{k}^{(m)}}$, $\by_{k}=\p{y_{k}^{(1)},\ldots,y_{k}^{(m)}}$, $\bFcn_{k}=\p{F_{k}^{(1)},\ldots,F_{k}^{(m)}}$ and the empty sum is set equal to zero by convention.
\end{proposition}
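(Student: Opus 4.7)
The plan is to fix an arbitrary $\bx \in \calH^n$ and show that the system
\begin{equation*}
\by = (C \kron \Id)\bx + (D \kron \Id)\bu, \qquad \bu \in \bm{\partial}\bfcn(\by)
\end{equation*}
has a unique solution $(\bu, \by) \in \calH^m \times \calH^m$, constructed row by row exactly as in the inner loop of \eqref{eq:explicit_ABCD_update}. The outer recursion $\bx_{k+1} = (A\kron\Id)\bx_k + (B\kron\Id)\bu_k$ and the evaluation $\bFcn_k = \bfcn(\by_k)$ are then deterministic, so iterating in $k$ yields the unique infinite sequence satisfying the algorithm dynamics.

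To establish the inner step, I would induct on $i \in \llbracket 1, m \rrbracket$, exploiting that $D$ is lower triangular (Assumption~\ref{ass:well-posedness}) so that the $i$-th component of $\by$ reads
\begin{equation*}
y^{(i)} = v^{(i)} + [D]_{i,i}\, u^{(i)}, \qquad v^{(i)} = \sum_{j=1}^n [C]_{i,j}\, x^{(j)} + \sum_{j=1}^{i-1} [D]_{i,j}\, u^{(j)},
\end{equation*}
where, by the induction hypothesis, $u^{(1)},\ldots,u^{(i-1)}$ are already uniquely determined and hence $v^{(i)}$ is fixed. Coupled with $u^{(i)} \in \partial f_i(y^{(i)})$, this is a single coupled equation in the unknown pair $(y^{(i)}, u^{(i)})$.

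The case split mirrors Assumption~\ref{ass:well-posedness}. If $i \in I_D$, set $\gamma_i := -[D]_{i,i} > 0$; the equivalence $u^{(i)} = \gamma_i^{-1}(v^{(i)} - y^{(i)}) \in \partial f_i(y^{(i)}) \Leftrightarrow y^{(i)} = \prox_{\gamma_i f_i}(v^{(i)})$ recalled in Section~\ref{sec:prelim} gives existence and uniqueness of $y^{(i)}$, and then uniqueness of $u^{(i)}$. If $i \notin I_D$, then Assumption~\ref{ass:well-posedness} forces $i \in I_{\textup{differentiable}}$, so $f_i$ is differentiable with $\partial f_i = \{\nabla f_i\}$, and $[D]_{i,i} = 0$ directly gives the explicit pair $y^{(i)} = v^{(i)}$ and $u^{(i)} = \nabla f_i(v^{(i)})$. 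In either branch, the update matches the formulas in \eqref{eq:explicit_ABCD_update} line by line.

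The only nonroutine point, which I would handle briefly as a preliminary observation, is confirming that each $f_i \in \mathcal{F}_{\sigma_i,\beta_i}$ is proper, lower semicontinuous and convex, so that the proximal operator is indeed well-defined and single-valued: in the branch $\beta_i < +\infty$, $f_i$ is real-valued and continuous hence lower semicontinuous; in the branch $\beta_i = +\infty$, this is by definition. I do not expect any genuine obstacle beyond this careful bookkeeping of the Kronecker structure and the case split.
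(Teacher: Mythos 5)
Your proposal is correct and follows essentially the same route as the paper's proof: induct on $i$ using the lower-triangular structure of $D$ to reduce each row to a one-variable fixed-point equation, resolved by either a proximal evaluation (when $[D]_{i,i}<0$) or an explicit gradient step (when $[D]_{i,i}=0$ and $f_i$ is differentiable), then iterate the deterministic outer recursion over $k$. The added bookkeeping confirming that each $f_i$ is proper, lower semicontinuous, and convex so that the prox is single-valued with full domain is a detail the paper also notes, just more briefly.
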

\begin{proof}
    Let $\p{f_{1},\ldots,f_m}\in\prod_{i=1}^{m}\mathcal{F}_{\sigma_{i},\beta_{i}}$. Consider an arbitrary $k\in\naturals$ and pick any $\bx_{k}=\p{x_{k}^{(1)},\ldots,x_{k}^{(n)}}\in\calH^{n}$. For $i\in\llbracket1,m\rrbracket$ in ascending order:
    \begin{itemize}
        \item  $v_k^{(i)}$ in the inner loop in~\eqref{eq:explicit_ABCD_update} is a linear combination of previously calculated/known quantities.  
        \item If $i\in I_{D}$, then~\eqref{eq:linear_system_with_nonlinearity} and the structure of $D$ in Assumption~\ref{ass:well-posedness} give that 
        \begin{align*}
            y^{(i)}_k \in v_k^{(i)} + [D]_{i,i} \partial f_{i}\p{y^{(i)}_k}&\quad\Leftrightarrow\quad\underbrace{(-[D]_{i,i})^{-1}\p{v_{k}^{(i)}-y_{k}^{(i)}}}_{=\, u_k^{(i)}} \in \partial f_{i}\p{y^{(i)}_k}\\
            &\quad\Leftrightarrow\quad y^{(i)}_k = \prox_{-[D]_{i,i}f_{i}}\p{v_k^{(i)}},
        \end{align*}
        which is unique since the proximal operator is single-valued with full domain under our assumptions (recall that each $f_{i}$ is assumed to be proper, lower semicontinuous, and convex).
        \item If $i\notin I_D$, then $f_i$ is differentiable due to Assumption~\ref{ass:well-posedness}, and~\eqref{eq:linear_system_with_nonlinearity} gives that $y^{(i)}_k = v^{(i)}_k$ and $u^{(i)}_k = \nabla f_{i}\p{y_{k}^{(i)}}$.
    \end{itemize}
    An inductive argument concludes the proof.
\end{proof}

The requirement that $D$ is lower triangular is for convenience. If there exists a permutation $\pi:\llbracket 1,m\rrbracket\to\llbracket 1,m\rrbracket$ with associated permutation matrix $P_\pi$ such that $P_\pi D P_{\pi}^{\top}$ is lower triangular, the resulting algorithm is equivalent to \eqref{eq:explicit_ABCD_update}. Let $\bar{\by}_k=P_\pi\by_k$, $\bar{\bu}_k=P_\pi\bu_k$, and $\bar{\bfcn}=\bfcn\circ (P_\pi^{\top}\kron\Id)$ (that just reorders the inputs). Then $\bm\partial\bar{\bfcn} = (P_\pi\kron\Id)\circ\bm\partial\bfcn\circ (P_\pi^{\top}\kron\Id)$ and the algorithm is equivalent to
\begin{equation*}
\begin{aligned}
                \bx_{k+1} &= \p{A \kron \Id} \bx_{k} + \p{BP_\pi^{\top} \kron \Id} \bar{\bu}_{k}, \\
                \bar{\by}_{k} &=  \p{P_\pi C \kron \Id} \bx_{k} + \p{P_\pi DP_\pi^{\top} \kron \Id} \bar{\bu}_{k}, \\
                \bar{\bu}_{k} &\in \bm{\partial} \bar{\bfcn}(\bar{\by}_{k}),
            \end{aligned}
\end{equation*}
which can be implemented as in \eqref{eq:explicit_ABCD_update}.

If no permutation matrix exists such that $P_\pi DP_\pi^{\top}$ is lower triangular, then there exist $i<j$ with $i,j\in\llbracket 1,m\rrbracket$ such that $[D]_{i,j}\neq 0$ and $[D]_{j,i}\neq 0$. This couples the $\partial f_i$ and $\partial f_j$ evaluations such that back-substitution fails and these updates cannot in general be done using only proximal operator or gradient evaluations of $f_i$ and $f_j$ individually.

Since the linear combinations decided by $A$, $B$, $C$, and $D$ are arbitrary, all first-order methods that use fixed linear combinations of previously computed quantities and evaluate each individual subdifferentials only once per iteration and either via a proximal operator or gradient evaluation can be implemented as in \eqref{eq:explicit_ABCD_update}, potentially after a permutation of variables. We provide a list of examples in Section~\ref{sec:alg_examples} that all satisfy Assumption~\ref{ass:well-posedness}. They also satisfy Assumption~\ref{ass:ABCD_solution_and_fixed_point_condition}, implying that solving \eqref{eq:the_problem_inclusion} is equivalent to finding a fixed point of the algorithm, and the rank conditions in \eqref{eq:minimal_representation_rank_conditions}.

\subsection{Examples}
\label{sec:alg_examples}

In this section, we provide examples of a few well-known algorithms that can be written as~\eqref{eq:linear_system_with_nonlinearity}.

\subsubsection{Douglas--Rachford method} 
\label{sec:douglas_rachford}
Let $\gamma \in \reals_{++}$, $\lambda\in\reals\setminus\{0\}$ and $f_{1},f_{2}\in\mathcal{F}_{0,\infty}$. The Douglas--Rachford method \cite{Douglas_Rachford_numerical_1956,Eckstein_Bertsekas_DouglasRachfordSplitting_1992,Lions_Mercier_SplittingAlgorithmsSum_1979} is given by
\begin{equation*}
    \begin{aligned}
    y_{k}^{(1)}&=\prox_{\gamma f_1}\p{x_k},\\
    y_{k}^{(2)}&=\prox_{\gamma f_2}\p{2y_k^{(1)}-x_k},\\
    x_{k+1}&=x_k+\lambda\p{y_k^{(2)}-y_k^{(1)}},
    \end{aligned}
\end{equation*}
which can equivalently be written as
\begin{equation*}
    \begin{aligned}
        \gamma^{-1}\p{x_k-y_k^{(1)}}&\in\partial f_1\p{y_k^{(1)}},\\
        \gamma^{-1}\p{\p{2y^{(1)}_{k}-x_k}-y_k^{(2)}}&\in\partial f_2\p{y_k^{(2)}},\\
        x_{k+1}&=x_k+\lambda\p{y_k^{(2)}-y_k^{(1)}}.
    \end{aligned}
\end{equation*}
By letting $\bx_{k}=x_{k}$, $\by_{k}=\p{y_k^{(1)},y_k^{(2)}}$ and $\bu_{k}=(\gamma^{-1}(x_k-y_k^{(1)}),\gamma^{-1}(2y_k^{(1)}-x_k-y_k^{(2)}))$, we get
\begin{align*}
\bx_{k+1}&=\bx_k+\left(\begin{bmatrix}-\gamma\lambda&-\gamma\lambda\end{bmatrix}\kron\Id\right)\bu_{k},\\
    \by_{k}&=\left(\begin{bmatrix} 1\\1\end{bmatrix}\kron\Id\right)\bx_k+\left(\begin{bmatrix}-\gamma&0\\-2\gamma&-\gamma\end{bmatrix}\kron\Id\right)\bu_{k},\\
    \bu_{k}&\in\bm\partial\bfcn(\by_{k}),
\end{align*}
where $\bm{\partial}\bfcn(\by)=\partial f_1\p{y^{(1)}}\times\partial f_2\p{y^{(2)}}$ for each $\by=\p{y^{(1)},y^{(2)}}\in\calH^2$, which matches the form~\eqref{eq:linear_system_with_nonlinearity}.

\subsubsection{Gradient method with heavy-ball momentum}
\label{sec:gradient_method_heavy-ball_momentum}
Let $\gamma,\beta_1 \in \reals_{++}$, $\delta\in\reals$ and $f_{1}\in\mathcal{F}_{0,\beta_1}$. The gradient method with heavy-ball momentum is given by
\begin{align*}
    x_{k+1} = x_k-\gamma\nabla f_1(x_k)+\delta(x_k-x_{k-1}).
\end{align*}
By letting $\bx_{k}=(x_{k},x_{k-1})$, $\by_k=x_{k}$, and $\bu_k=\nabla f_1(x_k)$, we get
\begin{equation*}
    \begin{aligned}
    \bx_{k+1}&=\left(\begin{bmatrix}1+\delta&-\delta\\1&0\end{bmatrix}\otimes \Id\right)\bx_{k}+\left(\begin{bmatrix}-\gamma\\0 \end{bmatrix}\otimes\Id\right)\bu_k,\\
    \by_k&=\left(\begin{bmatrix}1&0\end{bmatrix}\otimes \Id\right)\bx_{k}, \\
    \bu_{k} &\in \bm{\partial}\bfcn(\by_{k}),
    \end{aligned}
\end{equation*}
where $\bm{\partial}\bfcn(y)=\{\nabla f_1(y)\}$ for each $y\in\calH$, which matches the form~\eqref{eq:linear_system_with_nonlinearity}. 

\subsubsection{Proximal gradient method with heavy-ball momentum terms} 
\label{sec:proximal_gradient_method_heavy-ball_momentum}
Let $\gamma,\beta_1 \in \reals_{++}$, $\delta_1,\delta_2\in\reals$, $f_{1}\in\mathcal{F}_{0,\beta_1}$ and $f_{2}\in\mathcal{F}_{0,\infty}$. A proximal gradient method with heavy-ball momentum terms is given by
\begin{align*}
    x_{k+1} = \prox_{\gamma f_2}\p{x_k-\gamma\nabla f_1(x_k)+\delta_1(x_k-x_{k-1})}+\delta_2(x_k-x_{k-1}).
\end{align*}
By letting  $\bx_{k}=(x_{k},x_{k-1})$, $\by_{k}=\p{x_k,x_{k+1}-\delta_2(x_k-x_{k-1})}$, $\bu_{k}=(\nabla f_1(x_k),\gamma^{-1}(x_k-\gamma \nabla f_1(x_k)+(\delta_1+\delta_2)(x_k-x_{k-1})-x_{k+1}))$, we get
\begin{equation*}
    \begin{aligned}
    \bx_{k+1}   &=  
    \left(\begin{bmatrix}1+\delta_1+\delta_2&-\delta_1-\delta_2\\1&0\end{bmatrix}\otimes \Id\right)\bx_{k} + 
    \left(\begin{bmatrix}-\gamma&-\gamma\\0&0 \end{bmatrix}\otimes\Id\right)\bu_{k} \\
    \by_{k} &=    
    \left(\begin{bmatrix}1&0\\1+\delta_1&-\delta_1\end{bmatrix}\otimes \Id\right)\bx_{k} + 
    \left(\begin{bmatrix}0&0\\-\gamma&-\gamma \end{bmatrix}\otimes\Id\right)\bu_{k}, \\
    \bu_{k}&\in\bm\partial\bfcn(\by_{k}),
    \end{aligned}
\end{equation*}
where $\bm{\partial}\bfcn(\by)=\set{\nabla f_{1}\p{y^{(1)}}}\times\partial f_2\p{y^{(2)}}$ for each $\by=\p{y^{(1)},y^{(2)}}\in\calH^2$, which matches the form~\eqref{eq:linear_system_with_nonlinearity}.

\subsubsection{Davis--Yin three-operator splitting method}
\label{sec:davis_yin}
Let $\gamma,\lambda \in \reals_{++}$, $0 \leq \sigma_i < \beta_i \leq +\infty$ and $f_i\in\mathcal{F}_{\sigma_i,\beta_i}$ for each $i\in\llbracket1,3\rrbracket$, and $\beta_{2}<\infty$. The three-operator splitting method by Davis and Yin in~\cite{Davis2017AThreeOperator} is given by
\begin{align*}
    x_k &= \prox_{\gamma f_1}\p{z_k}, \\
    z_{k+\frac{1}{2}} &= 2x_k - z_k - \gamma \nabla f_2(x_k), \\
    z_{k+1} &= z_k + \lambda \p{\prox_{\gamma f_3}\p{z_{k+\frac{1}{2}}} - x_k}.
\end{align*}
\begin{sloppypar}By letting $\bx_{k} = z_k$, $\by_{k}=(x_k,x_k,x_k+\lambda^{-1}(z_{k+1}-z_k))$ and $\bu_{k}= (\gamma^{-1}(z_k-x_k),\gamma^{-1}(2x_k - z_k - z_{k + \frac{1}{2}}),\gamma^{-1}(z_{k+\frac{1}{2}}-x_k-\lambda^{-1}(z_{k+1}-z_{k})))$, we get
\end{sloppypar}
\begin{equation*}
    \begin{aligned}
    \bx_{k+1}   &= \bx_{k} + 
    \left(\begin{bmatrix}
        -\gamma\lambda & -\gamma\lambda & -\gamma\lambda 
    \end{bmatrix}\otimes\Id\right)\bu_{k}, \\
    \by_{k} &=    
    \left(\begin{bmatrix}
        1 \\
        1 \\
        1 
    \end{bmatrix}\otimes \Id\right)\bx_{k} + 
    \left(\begin{bmatrix}
        -\gamma & 0 & 0 \\
        -\gamma & 0 & 0 \\
        -2\gamma & -\gamma & -\gamma \\
    \end{bmatrix}\otimes\Id\right)\bu_{k}, \\
    \bu_{k}&\in\bm\partial\bfcn(\by_{k}),
    \end{aligned}
\end{equation*}
where $\bm{\partial}\bfcn(\by)=\partial f_1\p{y^{(1)}}\times\set{\nabla f_{2}\p{y^{(2)}}}\times\partial f_3\p{y^{(3)}}$ for each $\by=\p{y^{(1)},y^{(2)},y^{(3)}}\in\calH^3$, which matches the form~\eqref{eq:linear_system_with_nonlinearity}.

\subsubsection{Chambolle--Pock method}
\label{sec:chambolle_pock}
Let $\tau_1, \tau_2 \in \reals_{++}$, $ \theta \in\reals$, $0 \leq \sigma_i < \beta_i \leq +\infty$ and $f_i\in\mathcal{F}_{\sigma_i,\beta_i}$ for each $i\in\llbracket1,2\rrbracket$. The method by Chambolle and Pock in~\cite[Algorithm~1]{Chambolle_Pock_ergodic_2011} is given by
\begin{align*}
x_{k+1} &= \prox_{\tau_1 f_1} \p{x_k - \tau_1 y_{k}}, \\
y_{k+1} &= \prox_{\tau_2 f_2^*} \p{y_{k} + \tau_2 \left(x_{k+1} + \theta \p{x_{k+1} - x_k}\right)}.
\end{align*}
By letting $\bx_{k} = \p{x_k, y_k}$, $\by_{k}=( x_{k+1},\frac{1}{\tau_2} \p{y_k - y_{k+1}} + \p{1 + \theta} x_{k+1} - \theta x_{k})$, and $\bu_{k}= (\frac{1}{\tau_1} \p{x_k - x_{k+1}} - y_{k},y_{k+1})$, we get
\begin{equation*}
    \begin{aligned}
    \bx_{k+1}   &= 
    \left(\begin{bmatrix}
    1 & -\tau_1 \\
    0 & 0 \\
    \end{bmatrix}\otimes\Id\right)\bx_{k} + 
    \left(\begin{bmatrix}
        -\tau_1 & 0 \\
        0 & 1
    \end{bmatrix}\otimes\Id\right)\bu_{k}, \\
    \by_{k} &=    
    \left(\begin{bmatrix}
       1 & -\tau_1 \\
        1 & \frac{1}{\tau_2}-\tau_1(1+\theta)
    \end{bmatrix}\otimes \Id\right)\bx_{k} + 
    \left(\begin{bmatrix}
        -\tau_1 & 0 \\
        -\tau_1(1+\theta) & - \frac{1}{\tau_2}
    \end{bmatrix}\otimes\Id\right)\bu_{k}, \\
    \bu_{k}&\in\bm\partial\bfcn(\by_{k}),
    \end{aligned}
\end{equation*}
where $\bm{\partial}\bfcn(\by)=\partial f_1\p{y^{(1)}}\times\partial f_2\p{y^{(2)}}$ for each $\by=\p{y^{(1)},y^{(2)}}\in\calH^2$, which matches the form~\eqref{eq:linear_system_with_nonlinearity}.  

\section{Interpolation}\label{sec:interpolation}

Tightness of our methodology hinges critically on so-called \emph{interpolation conditions} for function classes that have been developed in the PEP literature~\cite{Taylor_Hendrickx_Glineur_2017a,Taylor_Hendrickx_Glineur_2017b}. The following theorem is proved in \cite[Theorem 4]{Taylor_Hendrickx_Glineur_2017a}.

\begin{theorem}\label{thm:interpolation}
Let $0\leq\sigma<\beta\leq +\infty$ and $\{(y_{i},F_{i},u_{i})\}_{i\in \mathcal{I}}$ be a finite family of triplets in $\calH\times\reals\times\calH$ indexed by $\mathcal{I}$. 
Then the following are equivalent: 
\begin{itemize}
    \item[(i)] There exists $f\in\mathcal{F}_{\sigma,\beta}$ such that 
\begin{align*}
    f(y_i) = F_i  \text{ and }  u_i \in \partial f (y_i)
\end{align*}
for each $i\in\mathcal{I}$.
\item[(ii)] It holds that 
\begin{align*}
    F_{i} \geq F_{j} + \inner{u_{j}}{y_{i}-y_{j}} + \frac{\sigma}{2}\norm{y_i-y_j}^{2}+ \frac{1}{2(\beta-\sigma)}\norm{u_i-u_j-\sigma(y_i-y_j)}^2
\end{align*}
for each $i,j\in\mathcal{I}$, where $\frac{1}{2(\beta-\sigma)}$ is interpreted as $0$ in the case $\beta = +\infty$.
\end{itemize}
\end{theorem}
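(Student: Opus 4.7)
The plan is to split the proof along the two implications. The direction (i) $\Rightarrow$ (ii) is the easy one and follows from well-known inequalities satisfied by the class $\mathcal{F}_{\sigma,\beta}$. For (ii) $\Rightarrow$ (i), I intend to explicitly construct an interpolating function $f$, treating the lower-semicontinuous case $\beta = +\infty$ directly and reducing the smooth case $\beta < +\infty$ to it by Fenchel duality.

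For (i) $\Rightarrow$ (ii), first I would reduce to the convex case by setting $g = f - (\sigma/2)\|\cdot\|^2$, so that $g$ is convex (and $(\beta - \sigma)$-smooth when $\beta < +\infty$) with $\nabla g(y_i) = u_i - \sigma y_i$ at each index. In the case $\beta = +\infty$ only the last term on the right of (ii) disappears, and the remaining inequality is exactly the strong-convexity inequality applied between $y_j$ and $y_i$. In the case $\beta < +\infty$ the stated inequality is the well-known refined descent lemma for convex functions with Lipschitz gradient (a consequence of the Baillon--Haddad theorem), applied to $g$ and then rearranged using $\nabla g = u - \sigma y$.

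For (ii) $\Rightarrow$ (i) with $\beta = +\infty$, the plan is to define the explicit maximum
\begin{equation*}
f(x) = \max_{i \in \mathcal{I}} \left\{ F_i + \inner{u_i}{x - y_i} + \frac{\sigma}{2}\norm{x - y_i}^2 \right\},
\end{equation*}
which is a finite max of $\sigma$-strongly convex quadratics, hence proper, lower semicontinuous, and $\sigma$-strongly convex. Condition (ii) applied at the point $x = y_i$ says exactly that the $j$-th branch does not exceed $F_i$ there, so the $i$-th branch attains the maximum with value $F_i$; the subgradient $u_i$ of the active quadratic is then also a subgradient of $f$ at $y_i$. For $\beta < +\infty$, the max-of-quadratics construction fails to be smooth; instead, I would pass to the Fenchel conjugate. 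Note the symmetry: condition (ii), after the substitution $\widetilde{y}_i = u_i$, $\widetilde{u}_i = y_i$, $\widetilde{F}_i = \inner{u_i}{y_i} - F_i$, becomes the corresponding condition for the parameters $(1/\beta,1/\sigma)$ in place of $(\sigma,\beta)$, which is exactly $\mathcal{F}_{\sigma,\beta}$-interpolability being dual to $\mathcal{F}_{1/\beta,1/\sigma}$-interpolability. Thus one builds $h \in \mathcal{F}_{1/\beta,1/\sigma}$ interpolating the transformed data by the max-of-quadratics construction above (applied with $\sigma$ replaced by $1/\beta$, which is positive because $\beta < +\infty$), and defines $f = h^{*}$; standard Fenchel identities ($x \in \partial h(u) \iff u \in \partial f(x)$ and $f(x) + h(u) = \inner{u}{x}$ at conjugate pairs) then give the desired interpolation $f(y_i) = F_i$, $u_i \in \partial f(y_i)$.

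The main obstacle will be the careful bookkeeping of the conjugate reduction, in particular handling the degenerate endpoint $\sigma = 0$ (where the dual smoothness parameter $1/\sigma = +\infty$ is admissible in the class used for $h$, so the reduction still applies) and verifying that $f = h^{*}$ inherits exactly the correct modulus of strong convexity $\sigma$ and smoothness constant $\beta$. Everything else amounts to rearrangement of the inequality in (ii) to certify that the claimed max-of-quadratics (in the primal for $\beta = +\infty$, in the dual for $\beta < +\infty$) has the required index as its active branch at each $y_i$, which is a direct algebraic verification.
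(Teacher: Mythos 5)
The paper does not prove this theorem; it simply cites Taylor--Hendrickx--Glineur (2017, Theorem 4), so there is no in-paper argument to compare against. On its own terms, your forward direction and your converse for $\beta = +\infty$ are sound, but your converse for $0 < \sigma < \beta < +\infty$ has a genuine gap. Your dual-class identification is correct: $f \in \mathcal{F}_{\sigma,\beta}$ if and only if $f^{*} \in \mathcal{F}_{1/\beta,\,1/\sigma}$, and the data transformation $(\widetilde{y}_i,\widetilde{F}_i,\widetilde{u}_i) = (u_i,\ \langle u_i,y_i\rangle - F_i,\ y_i)$ carries the $(\sigma,\beta)$-interpolation inequalities to the $(1/\beta,\,1/\sigma)$ ones. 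But the claim that the max-of-quadratics construction, run with modulus $1/\beta$, produces $h \in \mathcal{F}_{1/\beta,\,1/\sigma}$ is false: a finite pointwise maximum of $(1/\beta)$-strongly convex quadratics is $(1/\beta)$-strongly convex but generically non-differentiable along the crossing sets, hence not $(1/\sigma)$-smooth. Since for proper, lower semicontinuous, convex $h$ one has $h^{*}$ $\sigma$-strongly convex precisely when $h$ is $(1/\sigma)$-smooth, your $f = h^{*}$ is $\beta$-smooth but need not be $\sigma$-strongly convex. When both $\sigma > 0$ and $\beta < +\infty$, the conjugate alone does not take you to a one-sided class.

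The fix, and what Taylor--Hendrickx--Glineur actually do, is to compose two reductions rather than one: first a quadratic shift, replacing the data by $(y_i,\ F_i - (\sigma/2)\|y_i\|^2,\ u_i - \sigma y_i)$, reduces $\mathcal{F}_{\sigma,\beta}$-interpolation to $\mathcal{F}_{0,\beta-\sigma}$-interpolation; then the conjugate transformation reduces $\mathcal{F}_{0,\beta-\sigma}$ to the genuinely one-sided class $\mathcal{F}_{1/(\beta-\sigma),\,+\infty}$, where your max-of-quadratics construction does apply. You already use this quadratic shift in the forward direction; the two reductions simply need to be chained in that order for the converse.
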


Next, we adapt these interpolation conditions to our framework. In the following, we let $ \mathbfcal{F}_{\bm \sigma, \bm \beta}$ denote the class of all mappings $\bfcn:\calH^m\to\p{\reals\cup\{+\infty\}}^m$ defined by~\eqref{eq:product_f} for every possible choice of $f_i\in \mathcal{F}_{\sigma_i,\beta_i}$ and $i\in\llbracket1,m\rrbracket$. Moreover, with each $\bfcn \in \mathbfcal{F}_{\bm \sigma, \bm \beta}$, we associate the mapping $\bm{\partial}\bfcn:\calH^m\to 2^{\calH^m}$ defined by~\eqref{eq:product_f_subdiff}.

\begin{corollary}\label{cor:interpolation}
Let 
\begin{align}\label{eq:M_l}
    \bM_l &=
    \begin{cases}
    \dfrac{1}{2(\beta_{l}-\sigma_{l})}\begin{bmatrix}
     \beta_{l}\sigma_{l} & -\sigma_{l} & \beta_{l}\\
      -\sigma_{l} & 1 & -1 \\
     \beta_{l} & -1 & 1 
    \end{bmatrix}\kron \diag\p{e_l} & \text{if } \beta_{l} < \infty,\\
    \dfrac{1}{2}\begin{bmatrix}
     \sigma_{l} & 0 & 1\\
      0 & 0 & 0 \\
     1 & 0 & 0 
    \end{bmatrix}\kron \diag\p{e_l} & \text{if } \beta_{l} = +\infty,
    \end{cases} \\\label{eq:a_l}
    \ba_l &= -e_l
\end{align}
for each $l\in\llbracket1,m\rrbracket$, where $\kron$ denotes the Kronecker product and $\set{e_i}_{i=1}^{m}$ denotes the standard basis vectors of $\reals^{m}$. Then for each finite family of triplets in $\calH^m\times\reals^m\times\calH^m$ indexed by $\mathcal{I}$, $\{(\by_{i},\bFcn_{i},\bu_{i})\}_{i\in \mathcal{I}}$, the following are equivalent: 
\begin{itemize}
    \item[(i)] There exist $\bfcn\in \mathbfcal{F}_{\bm \sigma, \bm \beta}$ such that 
    \begin{align*}
        \bfcn(\by_i) = \bFcn_{i} \text{ and }  \bu_{i}\in\bm{\partial}\bfcn(\by_i)
    \end{align*}
    for each $i\in\mathcal{I}$.
    \item[(ii)] It holds that 
    \begin{align*}
        \ba_l^{\top}(\bFcn_{i}-\bFcn_{j}) + \mathcal{Q}\p{\bM_{l},(\by_i-\by_j,\bu_i,\bu_j)} \leq 0
    \end{align*}
    for each $i,j\in\mathcal{I}$ and $l\in\llbracket1,m\rrbracket$.
\end{itemize}
Moreover,
\begin{align*}
    \mathcal{Q}\p{\bM_{l},(0,\bu,\bu)}=0
\end{align*}
for each $\bu\in\calH^{m}$ and  $l\in\llbracket1,m\rrbracket$.
\end{corollary}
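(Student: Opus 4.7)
The plan is to reduce the multivariate statement to a componentwise application of Theorem~\ref{thm:interpolation} and then verify by direct expansion that the resulting scalar inequalities coincide with the quadratic form expressions in (ii).

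By the definition of $\mathbfcal{F}_{\bm\sigma,\bm\beta}$ and $\bm{\partial}\bfcn$ via~\eqref{eq:product_f}--\eqref{eq:product_f_subdiff}, condition (i) decouples: it is equivalent to the existence, for every $l\in\llbracket 1,m\rrbracket$, of a function $f_l\in\mathcal{F}_{\sigma_l,\beta_l}$ with $f_l(y_i^{(l)})=F_i^{(l)}$ and $u_i^{(l)}\in\partial f_l(y_i^{(l)})$ for all $i\in\mathcal{I}$, where I use componentwise notation $\by_i=(y_i^{(1)},\dots,y_i^{(m)})$ and similarly for $\bu_i, \bFcn_i$. Applying Theorem~\ref{thm:interpolation} to each index $l$ then yields the scalar equivalent: for every $i,j\in\mathcal{I}$ and every $l$,
\begin{align*}
F_j^{(l)}-F_i^{(l)}+\langle u_j^{(l)}, y_i^{(l)}-y_j^{(l)}\rangle+\tfrac{\sigma_l}{2}\|y_i^{(l)}-y_j^{(l)}\|^2+\tfrac{1}{2(\beta_l-\sigma_l)}\|u_i^{(l)}-u_j^{(l)}-\sigma_l(y_i^{(l)}-y_j^{(l)})\|^2\leq 0,
\end{align*}
with the last coefficient read as zero when $\beta_l=+\infty$.

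Next, I would show that this scalar inequality is exactly $\ba_l^\top(\bFcn_i-\bFcn_j)+\mathcal{Q}\p{\bM_l,(\by_i-\by_j,\bu_i,\bu_j)}\leq 0$. Since $\ba_l=-e_l$, the term $\ba_l^\top(\bFcn_i-\bFcn_j)$ produces $F_j^{(l)}-F_i^{(l)}$. The Kronecker factor $\diag(e_l)$ in $\bM_l$ ensures that $\mathcal{Q}\p{\bM_l,(\by_i-\by_j,\bu_i,\bu_j)}$ only picks up the $l$-th component of each of the three $\calH^m$-blocks; writing $y=y_i^{(l)}-y_j^{(l)}$, $u=u_i^{(l)}$, $v=u_j^{(l)}$ and expanding the $3\times 3$ coefficient matrix yields, in the case $\beta_l<+\infty$, $\tfrac{1}{2(\beta_l-\sigma_l)}\bigl[\beta_l\sigma_l\|y\|^2-2\sigma_l\langle y,u\rangle+2\beta_l\langle y,v\rangle+\|u-v\|^2\bigr]$. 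A short algebraic manipulation (grouping the terms in $\langle v,y\rangle$ and $\|y\|^2$ and expanding the square $\|u-v-\sigma_l y\|^2$) shows this equals $\langle v,y\rangle+\tfrac{\sigma_l}{2}\|y\|^2+\tfrac{1}{2(\beta_l-\sigma_l)}\|u-v-\sigma_l y\|^2$, matching the target. The case $\beta_l=+\infty$ reduces trivially to $\tfrac{\sigma_l}{2}\|y\|^2+\langle y,v\rangle$.

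Finally, the identity $\mathcal{Q}\p{\bM_l,(0,\bu,\bu)}=0$ follows by direct substitution: with the first block equal to zero, only the $2\times 2$ lower-right principal submatrix contributes, which in the finite-$\beta_l$ case gives $\tfrac{1}{2(\beta_l-\sigma_l)}\bigl(\|u^{(l)}\|^2-2\|u^{(l)}\|^2+\|u^{(l)}\|^2\bigr)=0$ and in the infinite case vanishes outright. The only mildly delicate step is keeping track of the Kronecker indexing in $\mathcal{Q}$; once that bookkeeping is fixed, the argument is purely a decomposition into components plus elementary algebra, with Theorem~\ref{thm:interpolation} supplying all the content.
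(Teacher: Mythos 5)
Your proposal is correct, and it gives precisely the natural derivation the authors intend by labeling this a corollary: the product structure in~\eqref{eq:product_f}--\eqref{eq:product_f_subdiff} decouples condition (i) into $m$ independent scalar interpolation problems, Theorem~\ref{thm:interpolation} applies to each, and the $\diag(e_l)$ Kronecker factor together with $\ba_l=-e_l$ exactly reconstructs the $l$-th scalar inequality from the quadratic-form notation. The algebra expanding the $3\times 3$ block and matching it to $\langle v,y\rangle+\tfrac{\sigma_l}{2}\|y\|^2+\tfrac{1}{2(\beta_l-\sigma_l)}\|u-v-\sigma_l y\|^2$, as well as the final identity $\mathcal{Q}(\bM_l,(0,\bu,\bu))=0$ via $\|u-v\|^2=0$ when $u=v$, all check out.
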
  

\section{Lyapunov inequalities}\label{sec:Lyapunov}
Convergence properties of many first-order methods can be analyzed via so-called {\emph{Lyapunov inequalities}}. We consider Lyapunov inequalities of the form
\begin{align}\label{eq:Lyapunov_inequality}
V(\bxi_{k+1},\bxi_\star)\leq\rho V(\bxi_k,\bxi_\star)-R(\bxi_k,\bxi_\star),
\end{align}
\begin{sloppypar}
where $\rho\in[0,1]$, $\bxi_k=(\bx_k,\bu_k,\by_k,\bFcn_k)\in\mathcal{S}$ contains all algorithm variables in iteration $k$, $\bxi_{k+1}=(\bx_{k+1},\bu_{k+1},\by_{k+1},\bFcn_{k+1})\in\mathcal{S}$ contains all algorithm variables in iteration $k+1$, $\bxi_\star=(\bx_{\star},\bu_{\star},\by_{\star},\bFcn_{\star})\in \mathcal{S}$ is a fixed point, $V:\mathcal{S}\times\mathcal{S}\to\reals$ is called a {\emph{Lyapunov function}}, $R:\mathcal{S}\times\mathcal{S}\to\reals$ is called a {\emph{residual function}}, and $\mathcal{S}=\calH^{n}\times\calH^{m}\times\calH^{m}\times\reals^m$. Once such an inequality has been established, various convergence properties may be concluded depending on the properties of the functions $V$ and $R$. 
\end{sloppypar}

We consider quadratic ansatzes of the functions $V$ and $R$ given by
\begin{align}\label{eq:Lyapunov_form}
    V(\bxi,\bxi_{\star}) &=\quadform{Q}{(\bx-\bx_{\star},\bu,\bu_{\star})}+q^{\top}(\bFcn-\bFcn_{\star}),\\ \label{eq:Residual_form}
    R(\bxi,\bxi_{\star}) &=\quadform{S}{(\bx-\bx_{\star},\bu,\bu_{\star})}+s^{\top}(\bFcn-\bFcn_{\star})
\end{align}
for each $\bxi,\bxi_\star\in\mathcal{S}$, respectively, where $Q,S\in\sym^{n+2m}$ and $q,s\in\reals^{m}$ parameterize the functions. These are general ansatzes that allow for arbitrary linear combinations of scalar products between linear combinations of $x^{(i)}-x_\star^{(i)}$, $u^{(i)}$, and $u_\star^{(i)}$ and linear combinations of function-value differences $f_i\p{y^{(i)}}-f_i\p{y_\star^{(i)}}$.

To draw useful convergence conclusions from \eqref{eq:Lyapunov_inequality}, we enforce nonnegative quadratic lower bounds on $V$ and $R$ given by
\begin{align}\label{eq:Lyapunov_form_lower_bound}
    V(\bxi,\bxi_{\star}) &\geq\quadform{P}{(\bx-\bx_{\star},\bu,\bu_{\star})}+p^{\top}(\bFcn-\bFcn_{\star})\geq 0,\\ \label{eq:Residual_form_lower_bound}
    R(\bxi,\bxi_{\star}) &\geq\quadform{T}{(\bx-\bx_{\star},\bu,\bu_{\star})}+t^{\top}(\bFcn-\bFcn_{\star}) \geq 0,
\end{align}
where $P,T\in\sym^{n+2m}$ and $p,t\in\reals^{m}$. We do not enforce these inequalities on all of $\mathcal{S}\times\mathcal{S}$ but only when the first argument is a so-called {\emph{algorithm-consistent}} point and the second argument satisfies the fixed-point equations \eqref{eq:fixed_point}.
\begin{definition}[\normalfont Algorithm consistency] \label{def:algorithm_consistency}
    Consider algorithm~\eqref{eq:linear_system_with_nonlinearity}. The point $\bxi=(\bx,\bu,\by,\bFcn)\in\mathcal{S}$ is called {\emph{algorithm-consistent for $\bfcn\in \mathbfcal{F}_{\bm \sigma, \bm \beta}$}} if 
    \begin{align*}
        \by &= (C\kron\Id) \bx + (D\kron\Id) \bu, \\
        \bu &\in \bm{\partial}\bfcn(\by), \\
        \bFcn&= \bfcn(\by).
    \end{align*}
\end{definition}
To restrict~\eqref{eq:Lyapunov_form_lower_bound} and~\eqref{eq:Residual_form_lower_bound} on this subset of $\mathcal{S}\times\mathcal{S}$ gives a larger class of Lyapunov functions and residual functions compared to requiring them to hold on all of $\mathcal{S}\times\mathcal{S}$.

In the proposed methodology, the user specifies $(P,p,T,t,\rho)$ and the methodology provides $(Q,q,S,s)$ complying with \eqref{eq:Lyapunov_inequality}, \eqref{eq:Lyapunov_form_lower_bound}, and \eqref{eq:Residual_form_lower_bound}, if it exists.
When such a $(Q,q,S,s)$ exists, the choice of $(P,p,T,t,\rho)$ decides which convergence properties the analysis implies.
\begin{enumerate}[(i)]
    \item Suppose that $\rho\in[0,1[$. Then
    \begin{align*}
        0\leq\quadform{P}{(\bx_k-\bx_{\star},\bu_k,\bu_{\star})}+p^{\top}(\bFcn_{k}-\bFcn_{\star})\leq V(\bxi_k,\bxi_\star)\leq\rho^k V(\bxi_0,\bxi_\star)\to 0
    \end{align*}
    as $k\to\infty$. In particular, 
    \begin{align}\label{eq:V_lower_bound_linear_convergence}
        \set{\quadform{P}{(\bx_k-\bx_{\star},\bu_k,\bu_{\star})}+p^{\top}(\bFcn_k-\bFcn_{\star})}_{k\in\naturals}\quad \text{converges }\rho\text{-linearly to zero.}
    \end{align}
    \item Suppose that $\rho=1$. Then 
    \begin{align}\label{eq:R_lower_bound_summable}
        \sum_{k=0}^{\infty}\p{\quadform{T}{(\bx_k-\bx_{\star},\bu_k,\bu_{\star})}+t^{\top}(\bFcn_k-\bFcn_{\star})}\leq\sum_{k=0}^{\infty}R(\bxi_k,\bxi_\star)\leq V(\bxi_0,\bxi_\star),
    \end{align}
    using a telescoping summation argument.
    In particular, 
    \begin{align}\label{eq:R_lower_bound_converges}
        \set{\quadform{T}{(\bx_k-\bx_{\star},\bu_k,\bu_{\star})}+t^{\top}(\bFcn_k-\bFcn_{\star})}_{k\in\naturals}\quad \text{is summable and converges to zero.}
    \end{align}
    
\end{enumerate}
Therefore, $(P,p,T,t,\rho)$ needs to be chosen to extract interesting convergence results from the lower bounds. If $P=T=0$ and $p=t=0$, then $V$ and $R$ equal to the zero function gives a valid Lyapunov inequality~\eqref{eq:Lyapunov_inequality} that complies with the lower bounds~\eqref{eq:Lyapunov_form_lower_bound} and~\eqref{eq:Residual_form_lower_bound}, but is of no interest. Useful choices of $(P,p,T,t,\rho)$ that imply different specific convergence results are provided in Section~\ref{sec:lower_bound_choices}.

The above requirements on the Lyapunov inequality, the Lyapunov function, and the residual function are formalized in Definition~\ref{def:Lyapunov} after we define the notion of a {\emph{successor}}.

\begin{definition}[\normalfont Successor] 
    Consider algorithm~\eqref{eq:linear_system_with_nonlinearity}. Given an algorithm-consistent point $\bxi$ for some $\bfcn\in \mathbfcal{F}_{\bm \sigma, \bm \beta}$, we define a \emph{successor of $\bxi$} to be any point $\bxi_{+}=(\bx_{+},\bu_{+},\by_{+},\bFcn_{+})\in\mathcal{S}$ such that 
    \begin{align*}
        \bx_{+} &= (A\kron\Id) \bx + (B\kron\Id) \bu, \\
        \by_{+} &= (C\kron\Id) \bx_{+} + (D\kron\Id) \bu_{+}, \\
        \bu_{+} &\in \bm{\partial}\bfcn(\by_{+}), \\
        \bFcn_{+}&= \bfcn(\by_{+}).
    \end{align*}
\end{definition}

\begin{definition}[\normalfont Quadratic Lyapunov inequality]\label{def:Lyapunov}
    Let $V:\mathcal{S}\times\mathcal{S}\to \reals$ as in~\eqref{eq:Lyapunov_form}, $R:\mathcal{S}\times\mathcal{S}\to \reals$ as in~\eqref{eq:Residual_form}, $P,T\in\sym^{n+2m}$, $p,t\in\reals^{m}$ and $\rho\in[0,1]$. 
    We say that $V$ and $R$ satisfy the {\emph{$\p{P,p,T,t,\rho}$-quadratic Lyapunov inequality}} for algorithm~\eqref{eq:linear_system_with_nonlinearity} over the class $ \mathbfcal{F}_{\bm \sigma, \bm \beta}$ if:
    \begin{enumerate}[C1.]
        \item $V(\bxi_{+},\bxi_{\star}) \leq \rho V(\bxi,\bxi_{\star})-R(\bxi,\bxi_{\star})$ for each $\bxi\in \mathcal{S}$ that is algorithm-consistent for $\bfcn$, each successor $\bxi_{+}\in \mathcal{S}$ of $\bxi$, each $\bxi_{\star}\in \mathcal{S}$ that satisfies~\eqref{eq:fixed_point}, and each $\bfcn\in \mathbfcal{F}_{\bm \sigma, \bm \beta}$.\label{item:Lyapunov_inequality}
        \item $V(\bxi,\bxi_{\star})\geq \quadform{P}{(\bx-\bx_{\star},\bu,\bu_{\star})}+p^{\top}(\bFcn-\bFcn_\star)\geq 0$ for each $\bxi\in \mathcal{S}$ that is algorithm-consistent for $\bfcn$, each $\bxi_{\star}\in \mathcal{S}$ that satisfies~\eqref{eq:fixed_point}, and each $\bfcn\in \mathbfcal{F}_{\bm \sigma, \bm \beta}$. \label{item:V_lower_bound}
        \item $R(\bxi,\bxi_{\star})\geq \quadform{T}{(\bx-\bx_{\star},\bu,\bu_{\star})}+t^{\top}(\bFcn-\bFcn_\star)\geq 0$ for each $\bxi\in \mathcal{S}$ that is algorithm-consistent for $\bfcn$, each $\bxi_{\star}\in \mathcal{S}$ that satisfies~\eqref{eq:fixed_point}, and each $\bfcn\in \mathbfcal{F}_{\bm \sigma, \bm \beta}$. \label{item:R_lower_bound}
    \end{enumerate}
\end{definition}

The main result in Section~\ref{sec:main} is a necessary and sufficient condition for the existence of a $(P,p,T,t,\rho)$-quadratic Lyapunov inequality expressed as a semidefinite feasibility problem over the Lyapunov function and residual function parameters $(Q,q,S,s)$. This is done by providing a necessary and sufficient condition for each of C\ref{item:Lyapunov_inequality}, C\ref{item:V_lower_bound}, and C\ref{item:R_lower_bound}. Conditions C\ref{item:Lyapunov_inequality}, C\ref{item:V_lower_bound}, and C\ref{item:R_lower_bound} can all be stated as the verification of a quadratic function $\Phi:\mathcal{S}\times\mathcal{S}\to\reals$ to be nonpositive over the subset of $\mathcal{S}\times\mathcal{S}$ that includes algorithm-consistent points in the first argument and fixed points in the second. Restricting to this subset adds significant technical complication compared to verifying nonpositivity over the entirety of $\mathcal{S}\times\mathcal{S}$, but provides the added benefit of a more general Lyapunov analysis.

\subsection{Lower bounds and convergence implications}
\label{sec:lower_bound_choices}

In this section, we provide a few choices of $(P,p,T,t,\rho)$ from which we can draw specific convergence results, under the assumption that there exists a Lyapunov function $V$ and a residual function $R$ that satisfy the $\p{P,p,T,t,\rho}$-quadratic Lyapunov inequality. Moreover, we assume that Assumptions~\ref{ass:ABCD_solution_and_fixed_point_condition} and~\ref{ass:well-posedness} hold.

\subsubsection{Linear convergence of the distance to the solution}
Suppose that $\rho\in[0,1[$ and
\begin{align}\label{eq:linear_rate_distance_to_the_solution}
    \p{P,p,T,t} = \p{\begin{bmatrix} C & D & -D \end{bmatrix}^{\top}e_{i}e_{i}^{\top}\begin{bmatrix} C & D & -D \end{bmatrix}, 0,0,0}
\end{align}
for some $i\in\llbracket1,m\rrbracket$, where $\set{e_i}_{i=1}^{m}$ denotes the standard basis vectors of $\reals^{m}$. Then~\eqref{eq:V_lower_bound_linear_convergence} implies that the {\emph{squared distance to the solution}} $\set{\norm{y^{(i)}_{k} - y_{\star}}^{2}}_{k\in\naturals}$ converges $\rho$-linearly to zero, where $y_{\star}$ is the solution to~\eqref{eq:the_problem_inclusion}, since 
\begin{align*}
    \quadform{P}{(\bx_k-\bx_\star,\bu_k,\bu_\star)} + p^{\top}(\bFcn_k-\bFcn_{\star}) =\norm{y^{(i)}_{k}-y_\star}^2\geq 0.
\end{align*}
\Update{Note that we exclude the case $\rho = 1$ since we can only guarantee that the squared distance to the solution remains bounded but not necessarily linearly convergent. }

\subsubsection{$\mathcal{O}\p{1/k}$ ergodic convergence}
Suppose that $\rho = 1$. 

\paragraph{Function-value suboptimality ($m=1$).} 
Suppose that $m=1$ and
\begin{align}\label{eq:convergence_function_value_suboptimality}
    \p{P,p,T,t} = \p{0,0,0,1}.
\end{align}
Then~\eqref{eq:R_lower_bound_converges} implies that the {\emph{function-value suboptimality}} $\set{f_1\p{y^{(1)}_{k}} - f_1\p{y_{\star}}}_{k\in\naturals}$ converges to zero, since 
\begin{align*}
    \quadform{T}{(\bx_k-\bx_\star,\bu_k,\bu_\star)} + t^{\top}(\bFcn_k-\bFcn_{\star}) = f_1\p{y^{(1)}_{k}} - f_1\p{y_{\star}}\geq 0.
\end{align*}
Moreover,~\eqref{eq:R_lower_bound_summable} and Jensen's inequality imply that the {\emph{ergodic function-value suboptimality}} 
\begin{align*}
    \set{f_1\p{\frac{1}{k+1}\sum_{j=0}^{k}y^{(1)}_{j}} - f_1\p{y_{\star}}}_{k\in\naturals}
\end{align*}
converges to zero with rate $\mathcal{O}\p{1/k}$ since  
\begin{align*}
    f_1\p{\frac{1}{k+1}\sum_{j=0}^{k}y^{(1)}_{j}} - f_1\p{y_{\star}} \leq \frac{V(\bxi_0,\bxi_\star)}{k+1}.
\end{align*}

\paragraph{Duality gap.} 
Suppose that
\begin{align}\label{eq:convergence_duality_gap}
    \p{P,p,T,t} = \p{0,0,
    \begin{bmatrix}
        C & D & -D\\
        0 & 0 & I
    \end{bmatrix}^{\top}
    \begin{bmatrix}
        0 & -\tfrac{1}{2}I\\
        -\tfrac{1}{2}I & 0
    \end{bmatrix}
    \begin{bmatrix}
        C & D & -D\\
        0 & 0 & I
    \end{bmatrix}
    ,\mathbf{1}}.
\end{align}
Then
\begin{align} \notag
    \quadform{T}{(\bx_{k}-\bx_\star,\bu_{k},\bu_\star)}+t^{\top}(\bFcn_{k}-\bFcn_\star)
    &=\sum_{i=1}^{m}\p{ f_i\p{y^{(i)}_{k}}-f_i\p{y_\star^{(i)}}-\inner{ u_\star^{(i)}}{y^{(i)}_{k}-y^{(i)}_\star}} \\ \label{eq:duality_gap}
    &=\sum_{i=1}^{m}\p{ f_i\p{y^{(i)}_{k}}-f_i\p{y_\star}-\inner{ u_\star^{(i)}}{y^{(i)}_{k}}}\geq 0,
\end{align}
since $\sum_{i=1}^{m}u_\star^{(i)}=0$ and $y_\star^{(1)}=\ldots=y_\star^{(m)}=y_\star$ (all fixed points are fixed-point encodings).
The quantity in~\eqref{eq:duality_gap} is known as the \emph{duality gap}. Note that if $m=1$, the duality gap reduces to function-value suboptimality. The duality gap is in fact a natural generalization to the function-value suboptimality, which we motivate next (see also, e.g., \cite{Chambolle_Pock_ergodic_2016}, \cite[Theorem 3.9]{Banert_Ringh_2020}, and \cite[Section 3.1]{Banert_Bot_Csetnek_2021}). Problem~\eqref{eq:the_problem} can equivalently be written as
\begin{equation*}
    \begin{aligned}
        & \underset{\p{y^{(1)},\ldots,y^{(m)}}\in\calH^{m}}{\text{minimize}} & & \sum_{i=1}^{m}f_{i}\p{y^{(i)}}\\
        &~~\text{subject to} & & y^{(i)}=y^{(m)} {\text{ for each }} i\in\llbracket 1,m\rrbracket.
    \end{aligned}
\end{equation*}
It has the \emph{Lagrangian function} $\mathcal{L}:\calH^m\times\calH^m\to\reals$ given by
\begin{align}\label{eq:Lagrangian}
    \mathcal{L}(\by,\bu) = \sum_{i=1}^m f_i(y^{(i)})+\sum_{i=1}^{m} \inner{u^{(i)}}{y^{(m)}-y^{(i)}},
\end{align}
where $\by=\p{y^{(1)},\ldots,y^{(m)}}\in\calH^m$, and $\bu=\p{u^{(1)},\ldots,u^{(m)}}\in\calH^m$ are the dual variables. The Lagrangian function satisfies
\begin{align*}
    \mathcal{L}(\by_\star,\bu)\leq \underbrace{\mathcal{L}(\by_\star,\bu_\star)}_{= \sum_{i=1}^mf_i(y_\star)} \leq\mathcal{L}(\by,\bu_\star)
\end{align*}
for each $\by,\bu\in\calH^{m}$. In particular, $\mathcal{L}(\by_k,\bu_\star)-\mathcal{L}(\by_\star,\bu_k)$ is equal to~\eqref{eq:duality_gap} and~\eqref{eq:R_lower_bound_converges} implies that the duality gap $\set{\mathcal{L}(\by_k,\bu_\star)-\mathcal{L}(\by_\star,\bu_k)}_{k\in\naturals}$ converges to zero. Moreover,~\eqref{eq:R_lower_bound_summable} and Jensen's inequality imply that the {\emph{ergodic duality gap}} 
\begin{align*}
    \set{\mathcal{L}\p{\frac{1}{k+1}\sum_{j=0}^{k}\by_j,\bu_\star}-\mathcal{L}\p{\by_\star,\frac{1}{k+1}\sum_{j=0}^{k}\bu_j}}_{k\in\naturals}
\end{align*}
converges to zero with rate $\mathcal{O}\p{1/k}$.


\section{Main result}\label{sec:main}
This section provides a necessary and sufficient condition, in terms of the feasibility of a semidefinite program, for the existence of a quadratic Lyapunov inequality in the sense of Definition~\ref{def:Lyapunov}. First, we introduce some necessary notation. Recall $\SumToZeroMat\in\reals^{m\times(m-1)}$ defined in~\eqref{eq:SumToZeroMat_def} when $m>1$. For all the matrices defined below, the interpretation is that the block column containing $\SumToZeroMat$ is removed when $m=1$. Let
\begin{align}
    \nonumber
    E_{\text{\o},+} &= 
    \begin{bmatrix} 
        C(I-A)&D-CB&-D&CB\SumToZeroMat\\
        0 & I & 0 & 0\\
        0 & 0 & I & 0
    \end{bmatrix},
    &
    E_{\text{\o},\star} &= 
    \begin{bmatrix} 
        C &D&0&-D\SumToZeroMat\\
        0 & I & 0 & 0\\
        0 & 0 & 0 & \SumToZeroMat
    \end{bmatrix}, 
    \\ 
    E_{+,\text{\o}} &= 
    \begin{bmatrix} 
        C(A-I)&CB-D&D&-CB\SumToZeroMat\\
        0 & 0 & I &0\\
        0 & I & 0 &0
    \end{bmatrix}, &
    E_{\star,\text{\o}} &= 
    \begin{bmatrix} 
        -C &-D&0&D\SumToZeroMat\\
        0 & 0 & 0 & \SumToZeroMat\\
        0 & I & 0 & 0
    \end{bmatrix}, \label{eq:E_matrices} 
    \\ \nonumber
    E_{+,\star} &= 
    \begin{bmatrix} 
        CA &CB&D&-D\SumToZeroMat-CB\SumToZeroMat\\
        0 & 0 & I & 0\\
        0 & 0 & 0 & \SumToZeroMat
    \end{bmatrix}, &
    E_{\star,+} &= 
    \begin{bmatrix} 
        -CA & -CB&-D&D\SumToZeroMat+CB\SumToZeroMat\\
        0 & 0 & 0 & \SumToZeroMat\\
        0 & 0 & I & 0
    \end{bmatrix},
\end{align}
where $E_{i,j}\in\reals^{3m\times(n + 3m-1)}$ for each distinct $i,j\in\{\text{\o},+,\star\}$, and
\begin{equation}
\begin{aligned}
    H_{\text{\o},+}&=\begin{bmatrix}I&-I\end{bmatrix},&
    H_{+,\text{\o}}&=\begin{bmatrix}-I&I\end{bmatrix},&
    H_{\text{\o},\star}&=\begin{bmatrix}I&0\end{bmatrix},\\
    H_{\star,\text{\o}}&=\begin{bmatrix}-I&0\end{bmatrix},&
    H_{+,\star}&=\begin{bmatrix}0&I\end{bmatrix},&
    H_{\star,+}&=\begin{bmatrix}0&-I\end{bmatrix},
\end{aligned}
\label{eq:H_matrices}
\end{equation}
where $H_{i,j}\in\reals^{m\times 2m}$ for each distinct $i,j\in\{\text{\o},+,\star\}$. Define
\begin{align}
    \bMlij &= E_{i,j}^{\top}\bM_lE_{i,j}\in\sym^{n + 3m -1} \quad\text{ and }\quad \balij=H_{i,j}^{\top}\ba_l\in\reals^{2m}
    \label{eq:Mlij_alij}
\end{align}
for each distinct $i,j\in\{\text{\o},+,\star\}$ and $l\in\llbracket1,m\rrbracket$, where the $\bM_l$'s and $\ba_{l}$'s are defined in~\eqref{eq:M_l} and~\eqref{eq:a_l}, respectively. Moreover, let
\begin{equation}
\begin{aligned}
    \Sigma_{\text{\o}} &=
    \begin{bmatrix}
        I & 0 & 0 & 0\\
        0 & I & 0 & 0\\
        0 & 0 & 0 & \SumToZeroMat\\
    \end{bmatrix},
    &
    \Sigma_{+}&=
    \begin{bmatrix}
        A & B & 0 & -B\SumToZeroMat\\
        0 & 0 & I & 0\\
        0 & 0 & 0 & \SumToZeroMat\\
    \end{bmatrix},
\end{aligned}
\label{eq:Sigma_matrices}
\end{equation}
where $\Sigma_i\in\reals^{(n+2m)\times(n+3m-1)}$ for each $i\in\{\text{\o},+\}$.

\begin{theorem}[\normalfont Main result] \label{thm:Lyapunov_iff}
    Assume that Assumption~\ref{ass:ABCD_solution_and_fixed_point_condition} and Assumption~\ref{ass:well-posedness} hold, let $\rho\in[0,1]$, and suppose that $P,T\in\sym^{n+2m}$ and $p,t\in\reals^{m}$ are such that 
    \begin{align*}
        \quadform{P}{(\bx-\bx_{\star},\bu,\bu_{\star})}+p^{\top}(\bFcn-\bFcn_{\star})\geq 0 
        \quad \text{ and } \quad
        \quadform{T}{(\bx-\bx_{\star},\bu,\bu_{\star})}+t^{\top}(\bFcn-\bFcn_{\star}) \geq 0
    \end{align*}
    for each $\bxi\in \mathcal{S}$ that is algorithm-consistent for $\bfcn$, each $\bxi_{\star}\in \mathcal{S}$ that satisfies~\eqref{eq:fixed_point}, and each $\bfcn\in \mathbfcal{F}_{\bm \sigma, \bm \beta}$. Then a sufficient condition for there to exist a Lyapunov function $V:\mathcal{S}\times\mathcal{S}\to \reals$ as in~\eqref{eq:Lyapunov_form} and a residual function $R:\mathcal{S}\times\mathcal{S}\to \reals$ as in~\eqref{eq:Residual_form} such that they satisfy the $\p{P,p,T,t,\rho}$-quadratic Lyapunov inequality for algorithm~\eqref{eq:linear_system_with_nonlinearity} over the class $ \mathbfcal{F}_{\bm \sigma, \bm \beta}$ is that the following system of constraints 
    \begingroup
    \allowdisplaybreaks
    \begin{subequations}\label{eq:Lyapunov_condition}
        \begin{align}
            \label{eq:Lyapunov_condition:Lyapunov_inequality}
            \rm{C}\ref{item:Lyapunov_inequality} & 
            \left\{
            \begin{aligned}
                &\lambda_{(l,i,j)}^{\rm{C}\ref{item:Lyapunov_inequality}}\geq 0\text{ for each } l\in\llbracket 1,m\rrbracket \text{ and distinct }i,j\in\set{\text{\o},+,\star},\\ 
                &\Sigma_{\text{\o}}^{\top}\p{\rho Q - S}\Sigma_{\text{\o}} - \Sigma_{+}^{\top}Q\Sigma_{+} + \sum_{l=1}^m\sum_{\substack{i,j\in\{\text{\o},+,\star\}\\ i\neq j}}\lambda_{(l,i,j)}^{\rm{C}\ref{item:Lyapunov_inequality}}\bMlij\succeq 0,\\ 
                &\begin{bmatrix}\rho q- s\\-q\end{bmatrix} + \sum_{l=1}^m\sum_{\substack{i,j\in\{\text{\o},+,\star\}\\ i\neq j}}\lambda_{(l,i,j)}^{\rm{C}\ref{item:Lyapunov_inequality}}\balij=0,
            \end{aligned} \right.\\
            \label{eq:Lyapunov_condition:V_lower_bound}
            \rm{C}\ref{item:V_lower_bound} & 
            \left\{
            \begin{aligned}
                &\lambda_{(l,i,j)}^{\rm{C}\ref{item:V_lower_bound}}\geq 0 \text{ for each } l\in\llbracket 1,m\rrbracket \text{ and distinct }i,j\in\set{\text{\o},\star},\\
                &\Sigma_{\text{\o}}^{\top}(Q-P)\Sigma_{\text{\o}} + \sum_{l=1}^m\sum_{\substack{i,j\in\{\text{\o},\star\} \\ i\neq j}}\lambda_{(l,i,j)}^{\rm{C}\ref{item:V_lower_bound}}\bMlij\succeq 0,\\ 
                &\begin{bmatrix}q-p\\0\end{bmatrix} + \sum_{l=1}^m\sum_{\substack{i,j\in\{\text{\o},\star\}\\ i\neq j}}\lambda_{(l,i,j)}^{\rm{C}\ref{item:V_lower_bound}}\balij=0,
            \end{aligned} \right. \\
            \label{eq:Lyapunov_condition:R_lower_bound}
            \rm{C}\ref{item:R_lower_bound} & 
            \left\{
            \begin{aligned}
                &\lambda_{(l,i,j)}^{\rm{C}\ref{item:R_lower_bound}}\geq 0 \text{ for each } l\in\llbracket 1,m\rrbracket \text{ and distinct }i,j\in\set{\text{\o},\star},\\
                &\Sigma_{\text{\o}}^{\top}(S-T)\Sigma_{\text{\o}} + \sum_{l=1}^m\sum_{\substack{i,j\in\{\text{\o},\star\} \\ i\neq j}}\lambda_{(l,i,j)}^{\rm{C}\ref{item:R_lower_bound}}\bMlij \succeq 0,\\ 
                &\begin{bmatrix}s-t\\0\end{bmatrix} + \sum_{l=1}^m\sum_{\substack{i,j\in\{\text{\o},\star\}\\ i\neq j}}\lambda_{(l,i,j)}^{\rm{C}\ref{item:R_lower_bound}}\balij=0,
            \end{aligned} \right. \\
            \label{eq:Lyapunov_condition:Q_S}
            &Q,S\in\sym^{n+2m}, \\
            \label{eq:Lyapunov_condition:q_s}
            &q,s\in\reals^{m},
        \end{align}
    \end{subequations}
    \endgroup
    is feasible for the scalars $\lambda_{(l,i,j)}^{\rm{C}\ref{item:Lyapunov_inequality}}$, $\lambda_{(l,i,j)}^{\rm{C}\ref{item:V_lower_bound}}$,$\lambda_{(l,i,j)}^{\rm{C}\ref{item:R_lower_bound}}$, matrices $Q$ and $S$, and vectors $q$ and $s$.
    Moreover, if $\dim(\calH)\geq n+3m-1$ and there exists $G\in\sym_{++}^{n+3m-1}$ and $\bchi\in\reals^{2m}$ such that 
    \begin{equation}\label{eq:slater_in_theorem}
    \begin{aligned}
        &\balij^{\top}\bm{\chi} + \trace\p{\bMlij G}\leq 0 \text{ for each } l\in\llbracket 1,m\rrbracket \text{ and distinct }i,j\in\set{\text{\o},+,\star},
        \end{aligned}
    \end{equation} 
    then the feasibility of~\eqref{eq:Lyapunov_condition} is also a necessary condition.
\end{theorem}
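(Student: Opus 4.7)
My plan is to reduce each of C\ref{item:Lyapunov_inequality}, C\ref{item:V_lower_bound}, C\ref{item:R_lower_bound} to the same canonical question: when is a quadratic-plus-linear form $\Phi$ nonpositive over the set of triples $(\bxi,\bxi_+,\bxi_\star)$ in which $\bxi$ is algorithm-consistent for some $\bfcn\in \mathbfcal{F}_{\bm\sigma,\bm\beta}$, $\bxi_+$ is a successor of $\bxi$, and $\bxi_\star$ solves the fixed-point equations~\eqref{eq:fixed_point}? For C\ref{item:Lyapunov_inequality} this $\Phi$ is $V(\bxi_+,\bxi_\star)-\rho V(\bxi,\bxi_\star)+R(\bxi,\bxi_\star)$; for C\ref{item:V_lower_bound} and C\ref{item:R_lower_bound} the successor drops out and only the $(\bxi,\bxi_\star)$ dependence remains.

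\textbf{Parameterization.} The next step is to eliminate the multivalued operator $\bm{\partial}\bfcn$ via the interpolation conditions of Corollary~\ref{cor:interpolation}. Using Proposition~\ref{prp:solution_and_fixed_point} under Assumption~\ref{ass:ABCD_solution_and_fixed_point_condition}, every fixed point is a fixed-point encoding, so one may write $\bu_\star=(\SumToZeroMat\kron\Id)\hat{\bu}_\star$ with $\hat{\bu}_\star\in\calH^{m-1}$ and $\by_\star=(\mathbf{1}\kron\Id)y_\star$ with $y_\star\in\calH$, while Assumption~\ref{ass:ABCD_solution_and_fixed_point_condition} guarantees a compatible $\bx_\star$. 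Algorithm-consistency similarly determines $(\by,\bFcn)$ from $(\bx,\bu)$ and the successor determines $(\bx_+,\by_+,\bFcn_+)$ from $(\bx,\bu,\bu_+)$. Collecting the free Hilbert-space quantities into an aggregate variable $\bz\in\calH^{n+3m-1}$, a direct computation shows that the matrices $\Sigma_{\text{\o}}$ and $\Sigma_+$ from~\eqref{eq:Sigma_matrices} send $\bz$ to $(\bx-\bx_\star,\bu,\bu_\star)$ and $(\bx_+-\bx_\star,\bu_+,\bu_\star)$ respectively, while each $E_{i,j}$ in~\eqref{eq:E_matrices} sends $\bz$ to the triplet $(\by_i-\by_j,\bu_i,\bu_j)$ that Corollary~\ref{cor:interpolation} requires. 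In these coordinates, each interpolation constraint reads $\mathcal{Q}(\bMlij,\bz)+\balij^{\top}\varphi\leq 0$, with $\varphi$ collecting the appropriate pair of function-value vectors.

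\textbf{Core semidefinite characterization.} Each of C\ref{item:Lyapunov_inequality}--C\ref{item:R_lower_bound} now has the shape: a quadratic-plus-linear functional in $(\bz,\varphi)$ is nonpositive whenever all the interpolation inequalities hold. An S-procedure-style lemma---the core result of Section~\ref{sec:lemma_1_proof}---characterizes this via the existence of nonnegative multipliers $\lambda_{(l,i,j)}\geq 0$ such that a weighted sum of the $\bMlij$ combined with the target quadratic is positive semidefinite and a weighted sum of the $\balij$ absorbs the linear part. Applied to the three conditions, this yields exactly the SDP~\eqref{eq:Lyapunov_condition}---all six ordered pairs $i\neq j\in\set{\text{\o},+,\star}$ enter C\ref{item:Lyapunov_inequality}, while only the two pairs in $\set{\text{\o},\star}$ enter C\ref{item:V_lower_bound} and C\ref{item:R_lower_bound} since $\bxi_+$ does not appear in the lower-bound conditions. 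This establishes sufficiency. For necessity, the Slater-type hypothesis~\eqref{eq:slater_in_theorem} combined with the dimension assumption $\dim\calH\geq n+3m-1$ guarantees strong duality: the dimension assumption allows one to realize any candidate Gram matrix $G\succeq 0$ as the actual Gram of some choice of $\bz\in\calH^{n+3m-1}$, while~\eqref{eq:slater_in_theorem} provides a strictly feasible dual point that closes any potential duality gap.

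\textbf{Main obstacle.} The heart of the proof is the core S-procedure lemma: because the interpolation matrices $\bMlij$ are indefinite and one is simultaneously combining constraints across several $(l,i,j)$ triples, no elementary S-lemma applies, and one must exploit the specific block structure of the interpolation constraints, their tightness from Theorem~\ref{thm:interpolation}, and the Hilbert-space dimension assumption together to obtain a lossless reduction to the finite-dimensional SDP~\eqref{eq:Lyapunov_condition}. A secondary care-point is handling the fixed-point encoding cleanly inside the parameterization; this is precisely the source of the $\SumToZeroMat$-blocks present in every $E_{i,j}$ and of the dimension $n+3m-1$ (rather than $n+3m$) of the aggregate variable $\bz$, and it is Assumption~\ref{ass:ABCD_solution_and_fixed_point_condition} together with Proposition~\ref{prp:solution_and_fixed_point} that makes this parameterization valid.
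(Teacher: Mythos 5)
Your proposal follows the same high-level route as the paper: reduce each of C\ref{item:Lyapunov_inequality}--C\ref{item:R_lower_bound} to nonpositivity of a quadratic-plus-linear form over the constrained set of algorithm-consistent points, successors, and fixed points; eliminate $\bm\partial\bfcn$ via Corollary~\ref{cor:interpolation}; use the fixed-point-encoding structure under Assumption~\ref{ass:ABCD_solution_and_fixed_point_condition} to compress the fixed-point variables into the $\SumToZeroMat$-blocks, giving the aggregate variable in $\calH^{n+3m-1}$; and invoke Lemma~\ref{lem:PSD_on_alg} as the engine. The specific parameter choices $(Q_{\text{\o}},q_{\text{\o}},Q_{+},q_{+})$ that the paper plugs into the lemma for each condition are left implicit in your sketch, but they follow directly from your identification of $\Phi$ for each condition, so this is not a gap.

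Two small inaccuracies in your ``main obstacle'' discussion are worth flagging. First, the strict-feasibility condition~\eqref{eq:slater_in_theorem} supplies a Slater point for the \emph{primal} SDP (it is a strictly positive definite $G$ satisfying the inequality constraints), not a ``strictly feasible dual point'' as you write. Second, the proof of Lemma~\ref{lem:PSD_on_alg} does not exploit any special block structure of the $\bMlij$ or require a bespoke S-lemma: it is the standard PEP mechanism of passing to a Gram-matrix formulation (lossless under $\dim\calH\geq n+3m-1$), then applying ordinary Lagrangian weak duality for sufficiency and SDP strong duality under the primal Slater point for necessity. Your description makes the lemma sound harder than its proof actually is; the S-procedure flavor is the \emph{conclusion} of the lemma, not the \emph{method} of its proof.
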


The proof of Theorem~\ref{thm:Lyapunov_iff} is based on, for C\ref{item:Lyapunov_inequality}, C\ref{item:V_lower_bound} and C\ref{item:R_lower_bound} in Definition~\ref{def:Lyapunov}, finding the relevant conditions, respectively, and then combining these conditions together to give~\eqref{eq:Lyapunov_condition};~\eqref{eq:Lyapunov_condition:Lyapunov_inequality} correspond to C\ref{item:Lyapunov_inequality},~\eqref{eq:Lyapunov_condition:V_lower_bound} correspond to C\ref{item:V_lower_bound} and~\eqref{eq:Lyapunov_condition:R_lower_bound} correspond to C\ref{item:R_lower_bound}.

Finding the conditions for C\ref{item:Lyapunov_inequality}, C\ref{item:V_lower_bound} and C\ref{item:R_lower_bound} is done in the spirit of PEP by finding the worst-case behavior over algorithm-consistent points, their successors, fixed points~\eqref{eq:fixed_point}, and mappings in the class $\mathbfcal{F}_{\bm \sigma, \bm \beta}$. In particular, given some objective $\Phi:\mathcal{S}^3\to\reals$, the performance estimation problem we consider is
\begin{equation}\label{eq:primalPEP}\tag{PEP}
\begin{aligned}
        & \underset{}{\text{maximize}} & & \Phi(\bxi,\bxi_{+},\bxi_{\star}) \\ \nonumber
        & \text{subject to} & & \bx_{+} = (A\kron\Id) \bx + (B\kron\Id) \bu, \\ \nonumber
        & & & \by = (C\kron\Id) \bx + (D\kron\Id) \bu, \\ \nonumber
        & & & \bu \in\bm{\partial}\bfcn(\by),  \\ \nonumber
        & & & \bFcn=\bfcn(\by),   \\ \nonumber
        & & & \by_{+} = (C\kron\Id) \bx_{+} + (D\kron\Id) \bu_{+}, \\ \nonumber
        & & & \bu_{+} \in\bm{\partial}\bfcn(\by_{+}),  \\ \nonumber
        & & & \bFcn_{+}=\bfcn(\by_{+}),   \\ \nonumber
        & & & \bx_{\star} = (A\kron\Id) \bx_{\star} + (B\kron\Id) \bu_{\star}, \\ \nonumber
        & & & \by_{\star} = (C\kron\Id) \bx_{\star} + (D\kron\Id) \bu_{\star}, \\ \nonumber
        & & &  \bu_{\star} \in \bm{\partial}\bfcn(\by_{\star}), \\ \nonumber
        & & & \bFcn_{\star}=\bfcn(\by_{\star}), \\ \nonumber
        & & & \bxi=(\bx,\bu,\by,\bFcn), \\ \nonumber
        & & & \bxi_{+}=(\bx_{+},\bu_{+},\by_{+},\bFcn_{+}), \\ \nonumber
        & & & \bxi_{\star}=(\bx_{\star},\bu_{\star},\by_{\star},\bFcn_{\star}),\\ \nonumber
        & & & \bfcn \in \mathbfcal{F}_{\bm \sigma, \bm \beta},
\end{aligned}
\end{equation}
where we maximize over all variables except $A,B,C,D$, $ \mathbfcal{F}_{\bm \sigma, \bm \beta}$ and $\Phi$. Let $S_{\Phi}^\star$ be the optimal value of~\eqref{eq:primalPEP}.

We consider objective functions $\Phi$ in~\eqref{eq:primalPEP} of the form
\begin{align}\label{eq:worst_case_objective_quadratic} 
        \Phi(\bxi,\bxi_{+},\bxi_{\star})=&\quadform{Q_{\text{\o}}}{(\bx-\bx_{\star},\bu,\bu_{\star})}+q_{\text{\o}}^{\top}(\bFcn-\bFcn_{\star})+\quadform{Q_{+}}{(\bx_{+}-\bx_{\star},\bu_{+},\bu_{\star})}+q_{+}^{\top}(\bFcn_{+}-\bFcn_{\star}),
\end{align}
parameterized by $Q_{\text{\o}},Q_{+}\in\sym^{n+2m}$ and $q_{\text{\o}},q_{+}\in\reals^{m}$. For each $cond \in \set{\rm{C}\ref{item:Lyapunov_inequality},\rm{C}\ref{item:V_lower_bound},\rm{C}\ref{item:R_lower_bound}}$ separately, the parameters $Q_{\text{\o}}$, $Q_{+}$, $q_{\text{\o}}$ and $q_{+}$ are chosen such that $S_{\Phi}^{\star}\leq0$ is a necessary and sufficient condition for $cond$ to hold.

Before we proceed, we reformulate~\eqref{eq:primalPEP}, and in order to do so we introduce some helpful notation. We let
\begin{equation}
    \begin{aligned}
    \bQ&=\Sigma_{\text{\o}}^{\top}Q_{\text{\o}}\Sigma_{\text{\o}}+ \Sigma_{+}^{\top}Q_{+}\Sigma_{+}\in\sym^{n+3m-1}, &
    \bq&=(q_{\text{\o}},q_{+})\in\reals^{2m},
    \end{aligned}
    \label{eq:bQ_and_bq}
\end{equation}
where $Q_{\text{\o}}$, $Q_{+}$, $q_{\text{\o}}$, and $q_{+}$ are the parameters in the objective function $\Phi$ given in~\eqref{eq:worst_case_objective_quadratic}, and $\Sigma_{\text{\o}}$ and $\Sigma_{+}$ are given in~\eqref{eq:Sigma_matrices}. 

\begin{lemma} \label{lem:PSD_on_alg}
    Let $\mathcal{I} = \set{\text{\o},\star}$ or $\mathcal{I} = \set{\text{\o},+,\star}$, $S_{\Phi}^\star$ the optimal value of~\eqref{eq:primalPEP}, and assume that Assumption~\ref{ass:ABCD_solution_and_fixed_point_condition} and Assumption~\ref{ass:well-posedness} hold. Suppose that $\Phi$ is of the form~\eqref{eq:worst_case_objective_quadratic} and that the right-hand side of~\eqref{eq:worst_case_objective_quadratic} only depends on variables with indices in the set $\mathcal{I}$ (a variable without a subscript is interpreted to have index \o). A sufficient condition for $S_{\Phi}^\star\leq 0$ is that  the following system 
    \begin{equation}
        \left\{
        \begin{aligned}
            & \lambda_{(l,i,j)}\geq 0 \text{ for each } l\in\llbracket 1,m\rrbracket \text{ and distinct }i,j\in\mathcal{I},\\
            & - \bQ + \sum_{l=1}^{m}\sum_{\substack{i, j\in\mathcal{I}\\ i\neq j}}\lambda_{(l,i,j)}\bMlij\succeq 0,\\
            & - \bq + \sum_{l=1}^{m}\sum_{\substack{i, j\in\mathcal{I}\\ i\neq j}}\lambda_{(l,i,j)}\balij=0,
        \end{aligned}
        \right.
        \label{eq:feasibility_certificate}
    \end{equation}
    is feasible for the scalars $\lambda_{(l,i,j)}$. 
    Furthermore, if $\dim(\calH)\geq n+3m-1$, and there exists $G\in\sym_{++}^{n+3m-1}$ and $\bchi\in\reals^{2m}$ such that 
    \begin{equation}\label{eq:slater_in_lemma}
    \begin{aligned}
        &\balij^{\top}\bm{\chi} + \trace\p{\bMlij G}\leq 0 \text{ for each } l\in\llbracket 1,m\rrbracket \text{ and distinct }i,j\in\mathcal{I},
        \end{aligned}
    \end{equation}
    then~\eqref{eq:feasibility_certificate} is a necessary condition.
\end{lemma}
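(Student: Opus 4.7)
The strategy is to use Corollary~\ref{cor:interpolation} to convert the infinite-dimensional problem~\eqref{eq:primalPEP} into a finite-dimensional semidefinite program, and then to recognize~\eqref{eq:feasibility_certificate} as the dual feasibility condition. I would begin by eliminating the equalities of~\eqref{eq:primalPEP}. Substituting the state-space relations eliminates $\bx_+, \by, \by_+$, and combining the fixed-point equations with Proposition~\ref{prp:solution_and_fixed_point} yields $\by_\star = (\mathbf{1}\kron\Id)y_\star$ and $\bu_\star = (\SumToZeroMat\kron\Id)\hat\bu_\star$ for some $y_\star\in\calH$, $\hat\bu_\star\in\calH^{m-1}$ (with $\bu_\star = 0$ when $m=1$); the range condition of Assumption~\ref{ass:ABCD_solution_and_fixed_point_condition} gives a compatible $\bx_\star$, whose precise value does not matter since the objective and constraints depend only on differences. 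The parameter $y_\star$ then cancels in every $\by_i-\by_j$, leaving the reduced variables $\bz = (\bx-\bx_\star, \bu, \bu_+, \hat\bu_\star)\in\calH^{n+3m-1}$ and $\mathbf{F} = (\bFcn-\bFcn_\star, \bFcn_+-\bFcn_\star)\in\reals^{2m}$. A direct expansion against~\eqref{eq:E_matrices}--\eqref{eq:Sigma_matrices} gives
\begin{align*}
    (\by_i-\by_j, \bu_i, \bu_j) &= (E_{i,j}\kron\Id)\bz, & \bFcn_i-\bFcn_j &= H_{i,j}\mathbf{F}, \\
    (\bx-\bx_\star, \bu, \bu_\star) &= (\Sigma_{\text{\o}}\kron\Id)\bz, & (\bx_+-\bx_\star, \bu_+, \bu_\star) &= (\Sigma_+\kron\Id)\bz
\end{align*}
for each distinct $i,j\in\{\text{\o},+,\star\}$. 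Applying Corollary~\ref{cor:interpolation} to the triples $\{(\by_?, \bFcn_?, \bu_?) : ?\in\mathcal{I}\}$ replaces the existence of $\bfcn \in \mathbfcal{F}_{\bm\sigma,\bm\beta}$ by the inequalities $\mathcal{Q}(\bMlij, \bz) + \balij^\top\mathbf{F} \leq 0$ for $l\in\llbracket 1,m\rrbracket$ and distinct $i,j\in\mathcal{I}$, so that~\eqref{eq:primalPEP} rewrites as
\[
    S_\Phi^\star = \sup_{\bz,\mathbf{F}}\Bigl\{\mathcal{Q}(\bQ,\bz) + \bq^\top\mathbf{F} : \mathcal{Q}(\bMlij,\bz)+\balij^\top\mathbf{F}\leq 0 \text{ for all applicable } l, i, j\Bigr\}.
\]

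For sufficiency, I would use a direct Lagrangian weighting. Given multipliers $\lambda_{(l,i,j)}\geq 0$ from~\eqref{eq:feasibility_certificate}, multiply the $(l,i,j)$-th interpolation inequality by $\lambda_{(l,i,j)}$, sum, and subtract from the objective: the result equals $-\mathcal{Q}(-\bQ+\sum\lambda_{(l,i,j)}\bMlij,\bz)$ plus a linear term in $\mathbf{F}$ with coefficient $\bq-\sum\lambda_{(l,i,j)}\balij$, and both are non-positive (the first by the semidefinite inequality in~\eqref{eq:feasibility_certificate}, the second vanishing by the equality). Hence $\mathcal{Q}(\bQ,\bz)+\bq^\top\mathbf{F}\leq 0$ on every feasible pair, giving $S_\Phi^\star\leq 0$.

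For necessity, I would invoke standard semidefinite programming duality. Under $\dim\calH\geq n+3m-1$ the Gram-matrix map sends $\calH^{n+3m-1}$ onto $\sym_+^{n+3m-1}$, so the identity $\mathcal{Q}(M,\bz)=\trace(MG)$ converts the reduced PEP into the SDP
\[
    p^\star = \sup_{G\succeq 0,\mathbf{F}}\Bigl\{\trace(\bQ G)+\bq^\top\mathbf{F} : \trace(\bMlij G)+\balij^\top\mathbf{F}\leq 0 \text{ for all applicable } l, i, j\Bigr\}
\]
with $p^\star=S_\Phi^\star$. A routine Lagrangian computation then identifies~\eqref{eq:feasibility_certificate} as the dual feasibility condition; because the dual objective is identically zero, the dual value is $0$ if feasible and $+\infty$ otherwise. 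The Slater hypothesis~\eqref{eq:slater_in_lemma} supplies a strictly primal-feasible pair $(G,\bchi)$ with $G\succ 0$, and, since the remaining constraints are affine, this suffices for strong duality. The trivial pair $(G,\mathbf{F})=(0,0)$ is always primal-feasible with objective $0$, so $p^\star\geq 0$; combined with the hypothesis $S_\Phi^\star\leq 0$ this forces $p^\star=0$, whence the dual value is also $0$ and~\eqref{eq:feasibility_certificate} is feasible.

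The main obstacle I anticipate is a careful justification of the dimensional reduction: verifying that admissible PEP tuples are in bidirectional correspondence with pairs $(\bz,\mathbf{F})\in\calH^{n+3m-1}\times\reals^{2m}$, which requires tracking the non-uniqueness of $\bx_\star$, the $y_\star$-cancellation in the $\by_i-\by_j$ differences, and the degeneracy of $\SumToZeroMat$ when $m=1$, and moreover matching these reductions column-by-column with the definitions~\eqref{eq:E_matrices},~\eqref{eq:H_matrices},~\eqref{eq:Sigma_matrices}. Once this bookkeeping is done, the interpolation step and the SDP duality argument proceed by standard means.
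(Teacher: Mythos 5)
Your proof is correct and follows essentially the same route as the paper's: eliminate the equalities via the state-space and fixed-point structure, reduce to the variables $\bzeta=(\bx-\bx_\star,\bu,\bu_+,\hat\bu_\star)$ and $\bchi$, apply the interpolation conditions of Corollary~\ref{cor:interpolation}, Gramian-reformulate into an SDP, and conclude sufficiency via a Lagrangian weighting (equivalently, SDP weak duality) and necessity via Slater-based strong duality together with the observation that $(G,\bchi)=(0,0)$ forces the primal value to be exactly~$0$. The only cosmetic difference is that your sufficiency step is an $S$-procedure argument carried out directly at the Hilbert-space level rather than passing through the SDP relaxation, but the substance is identical to the paper's proof.
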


\begin{proof}[Proof sketch]
    The full proof is provided in Section~\ref{sec:lemma_1_proof}. The proof first reformulates~\eqref{eq:primalPEP} as a semidefinite program, forms the dual problem, which is equal to the feasibility problem~\eqref{eq:feasibility_certificate}, and shows strong duality when $\dim(\calH)\geq n+3m-1$ and~\eqref{eq:slater_in_lemma} holds.
\end{proof}

\begin{proof}[Proof of Theorem~\ref{thm:Lyapunov_iff}]

First, suppose that the parameters $(Q,q,T,t)$ are fixed in some Lyapunov function $V:\mathcal{S}\times\mathcal{S}\to \reals$ as in~\eqref{eq:Lyapunov_form} and some residual function $R:\mathcal{S}\times\mathcal{S}\to \reals$ as in~\eqref{eq:Residual_form}. We consider when $V$ and $R$ satisfy the $\p{P,p,T,t,\rho}$-quadratic Lyapunov inequality.

C\ref{item:Lyapunov_inequality} holds if $S_{\Phi}^{\star}\leq 0$ for the choice $Q_{\text{\o}}=S-\rho Q$, $q_{\text{\o}}=s-\rho q$, $Q_{+}=Q$, and $q_{+}=q$, which in turn holds if~\eqref{eq:Lyapunov_condition:Lyapunov_inequality} is feasible, according to Lemma~\ref{lem:PSD_on_alg}. 

C\ref{item:V_lower_bound} holds if $S_{\Phi}^{\star}\leq 0$ for the choice $Q_{\text{\o}}=P-Q$, $q_{\text{\o}}=p-q$, $Q_{+}=0$ and $q_{+}=0$, which in turn holds if~\eqref{eq:Lyapunov_condition:V_lower_bound} is feasible, according to Lemma~\ref{lem:PSD_on_alg}.

C\ref{item:R_lower_bound} holds if $S_{\Phi}^{\star}\leq 0$ for the choice $Q_{\text{\o}}=T-S$, $q_{\text{\o}}=t-s$, $Q_{+}=0$ and $q_{+}=0$, which in turn holds if~\eqref{eq:Lyapunov_condition:R_lower_bound} is feasible, according to Lemma~\ref{lem:PSD_on_alg}.

If in addition $\dim(\calH)\geq n+3m-1$ and~\eqref{eq:slater_in_theorem} hold, then Lemma~\ref{lem:PSD_on_alg} gives that feasibility of~\eqref{eq:Lyapunov_condition:Lyapunov_inequality}-\eqref{eq:Lyapunov_condition:R_lower_bound} is a necessary condition for C\ref{item:Lyapunov_inequality}, C\ref{item:V_lower_bound} and C\ref{item:R_lower_bound} to hold simultaneously. 

Second, note that the proof is complete if we let the parameters $(Q,q,T,t)$ free, as in~\eqref{eq:Lyapunov_condition:Q_S}-\eqref{eq:Lyapunov_condition:q_s}.
\end{proof}  

\section{Numerical examples}\label{sec:numerical_examples}
The necessary and sufficient condition \eqref{eq:Lyapunov_condition} in Theorem~\ref{thm:Lyapunov_iff} for the existence of a Lyapunov inequality is a semidefinite program of size $n+2m$ (which is below ten for all examples in Section~\ref{sec:alg_examples}) and is readily solved by standard solvers. We apply Theorem~\ref{thm:Lyapunov_iff} to each example in Section~\ref{sec:alg_examples} in two different ways:
\begin{enumerate}[B1.]
    \item We find the smallest possible $\rho\in[0,1[$, via bisection search, such that a $\p{P,p,T,t,\rho}$-Lyapunov inequality exists, where $\p{P,p,T,t}$ is chosen as in~\eqref{eq:linear_rate_distance_to_the_solution}, which implies that the squared distance to the solution convergence $\rho$-linearly to zero. The tolerance for the bisection search is set to $0.001$ and $i$ is set to $1$ in~\eqref{eq:linear_rate_distance_to_the_solution} for all examples. 
    \label{item:numeric_linear}
    \item We fix $\rho = 1$ and find a range of algorithm parameters for which there exists a $\p{P,p,T,t,\rho}$-Lyapunov inequality, where $\p{P,p,T,t}$ is chosen as in~\eqref{eq:convergence_function_value_suboptimality} if $m=1$ and~\eqref{eq:convergence_duality_gap} if $m>1$, implying ($\mathcal{O}\p{1/k}$ ergodic) convergence of the function-value suboptimality and duality gap, respectively. The parameter range is evaluated on a square grid of size $0.01\times0.01$.
    \label{item:numeric_ergodic}
\end{enumerate}

\subsection{Douglas--Rachford method}
Consider the Douglas--Rachford method in Section~\ref{sec:douglas_rachford} in the case when $f_1\in\mathcal{F}_{1,2}$, $f_2\in\mathcal{F}_{0,\infty}$ and $\lambda = 1$. Figure~\ref{fig:Douglas–Rachford_linear_rates} shows the $\rho$ we obtain via~B\ref{item:numeric_linear}. In particular, note that we recover the already known tight rates given in \cite[Theorem~2]{Giselsson2017Linear}.

\begin{figure}[t!]
    \centering
    \pgfplotstableread[col sep=comma,]{DR_linear_sigmaEq1_betaEq2_lambdaEq1.tex}\datatable
    \begin{tikzpicture}
        \begin{axis}[
            width=0.75\textwidth,
            height=5cm,
            legend style={at={(0.985,0.05)},anchor=south east,nodes=right,cells={line width=1pt}},
            ylabel={$\rho$},
            xlabel={$\gamma$},
            xmin=0,
            xmax=5,
            ymin=0,
            ytick distance=0.2,
            clip marker paths=true,
            ylabel style={rotate=-90},
            grid=both]
            
            \addplot [color=matlabcolor1op,only marks,on layer=background] table [x={x-axis}, y={a}]{\datatable};
            \addlegendentry{Thm.~\ref{thm:Lyapunov_iff}}
            
            \addplot[color=matlabcolor4!50!matlabcolor1op,line width=1.5pt] table [x={x-axis}, y={b}]{\datatable};
            \addlegendentry{\cite[Thm.~2]{Giselsson2017Linear}}
        \end{axis}
    \end{tikzpicture}
    \caption{B\ref{item:numeric_linear} applied to the Douglas–Rachford method (see Section~\ref{sec:douglas_rachford}) when $f_1\in\mathcal{F}_{1,2}$, $f_2\in\mathcal{F}_{0,\infty}$ and $\lambda = 1$, and the tight convergence rate given in \cite[Theorem~2]{Giselsson2017Linear}.}
    \label{fig:Douglas–Rachford_linear_rates}
\end{figure}
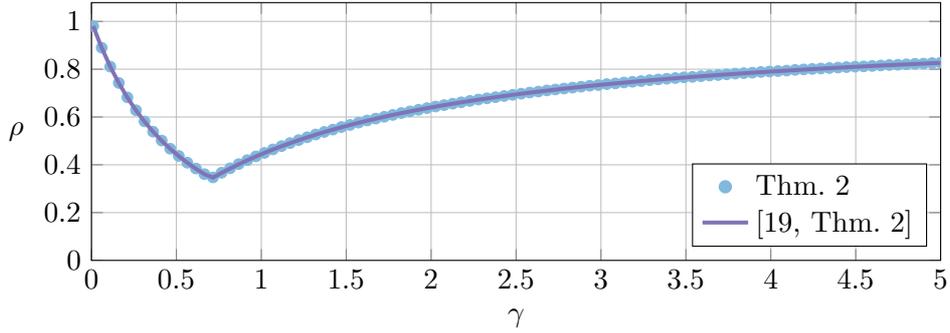

\subsection{Proximal gradient method with heavy-ball momentum}
Consider the gradient method with heavy-ball momentum in Section~\ref{sec:gradient_method_heavy-ball_momentum} and the proximal operator extension in Section~\ref{sec:proximal_gradient_method_heavy-ball_momentum}. Note that the method in Section~\ref{sec:proximal_gradient_method_heavy-ball_momentum} reduces to the one in Section~\ref{sec:gradient_method_heavy-ball_momentum} if the proximal operator is removed and either $\delta_1=0$ or $\delta_2=0$. 

Figure~\ref{fig:subfig:heavy_ergodic} contains the parameter region we obtain via~B\ref{item:numeric_ergodic} for the gradient method with heavy-ball momentum when $f_1\in\mathcal{F}_{0,1}$. Note that we improve on the parameter region given in~\cite{Ghadimi2015Global} that guarantees $\mathcal{O}\p{1/k}$ ergodic convergence of the function-value suboptimality.

Figure~\ref{fig:subfig:prox_heavy_ergodic} contains the parameter region we obtain via~B\ref{item:numeric_ergodic} for the (proximal) gradient method with heavy-ball momentum when $f_1\in\mathcal{F}_{0,1}$ (and $f_2\in\mathcal{F}_{0,\infty}$). In particular, note how the feasible parameter region is affected by adding a proximal term---having the momentum term inside the proximal evaluation ($\delta_2=0$) gives a slightly smaller region, and having it outside ($\delta_1=0$) makes it even smaller. 

Figure~\ref{fig:subfig:heavy_linear} shows the $\rho$ we obtain via~B\ref{item:numeric_linear} for the gradient method with heavy-ball momentum when $f_1\in\mathcal{F}_{1,10}$. Note that we improve on the rates given in~\cite{Ghadimi2015Global} and range of allowable momentum parameters $\delta$ that guarantee linear convergence.

\begin{figure}[t!]
    \centering
    \begin{subfigure}{0.49\textwidth}
        \pgfplotstableread[col sep=comma,]{Polyakgrad_Ghadimi.tex}\datatable
        \pgfplotstableread[col sep=comma,]{Polyakgrad_CVX_params.tex}\datatableour
        \begin{tikzpicture}
            \begin{axis}[
                width=\textwidth,
                height=5cm,
                legend style={at={(0.02,0.98)},anchor=north west,nodes=right,cells={line width=2pt}},
                ylabel={$\gamma$},
                xlabel={$\delta$},
                xmin=-1,
                xmax=1,
                ymin=0,
                ylabel style={rotate=-90},
                ytick distance=0.5]
                \addplot [matlabcolor1op,fill=matlabcolor1op] table [x={x-axis}, y={a}]{\datatableour}\closedcycle;
                \addlegendentry{Thm.~\ref{thm:Lyapunov_iff}}
                \addplot [matlabcolor4op,fill=matlabcolor4op] table [x={x-axis}, y={a}]{\datatable}\closedcycle;
                \addlegendentry{\cite[Thm.~1]{Ghadimi2015Global}}
            \end{axis}
        \end{tikzpicture}
        \caption{B\ref{item:numeric_ergodic} applied to the gradient method with heavy-ball momentum  when  $f_1\in\mathcal{F}_{0,1}$, and compared to the range given in \cite[Theorem~1]{Ghadimi2015Global} that gives $\mathcal{O}\p{1/k}$ ergodic convergence of the function-value suboptimality.}
        \label{fig:subfig:heavy_ergodic}
    \end{subfigure}
    \hfill
    \begin{subfigure}{0.49\textwidth}
        \pgfplotstableread[col sep=comma,]{Polyakgrad_CVX_params.tex}\datatable
        \pgfplotstableread[col sep=comma,]{Polyakproxgrad_CVX_params_sweep_delta2_delta1_0.tex}\datatablesweepdeltatwo
        \pgfplotstableread[col sep=comma,]{Polyakproxgrad_CVX_params_sweep_delta1_delta2_0.tex}\datatablesweepdeltaone
        \begin{tikzpicture}
            \begin{axis}[
                width=\textwidth,
                height=5cm,
                legend style={at={(0.02,0.98)},anchor=north west,nodes=right,cells={line width=2pt}},
                ylabel={$\gamma$},
                xlabel={$\delta$, $\delta_1$, or $\delta_2$},
                xmin=-1,
                xmax=1,
                ymin=0,
                ylabel style={rotate=-90},
                ytick distance=0.5]
                \addplot [matlabcolor1op,fill=matlabcolor1op] table [x={x-axis}, y={a}]{\datatable}\closedcycle;
                \addlegendentry{Alg. $§$\ref{sec:gradient_method_heavy-ball_momentum}}
                \addplot [matlabcolor4!50!matlabcolor1op,fill=matlabcolor4!50!matlabcolor1op] table [x={x-axis}, y={a}]{\datatablesweepdeltaone}\closedcycle;
                \addlegendentry{Alg. $§$\ref{sec:proximal_gradient_method_heavy-ball_momentum}, $\delta_2=0$}
                \addplot [matlabcolor7op,fill=matlabcolor7op] table [x={x-axis}, y={a}]{\datatablesweepdeltatwo}\closedcycle;
                \addlegendentry{Alg. $§$\ref{sec:proximal_gradient_method_heavy-ball_momentum}, $\delta_1=0$}
            \end{axis}
        \end{tikzpicture}
        \caption{B\ref{item:numeric_ergodic} applied to the (proximal) gradient method with heavy-ball momentum when $f_1\in\mathcal{F}_{0,1}$ (and $f_2\in\mathcal{F}_{0,\infty}$).\\ \textit{ } \\ }
        \label{fig:subfig:prox_heavy_ergodic}
    \end{subfigure}

    \begin{subfigure}{0.75\textwidth}
        \pgfplotstableread[col sep=comma,]{Heavy-ball_linear_Ghadimi.tex}\datatableGhadimi
        \pgfplotstableread[col sep=comma,]{Heavy-ball_linear_PEP.tex}\datatablePEP
        \begin{tikzpicture}
          \begin{axis}[
            width=\textwidth,
            height=5cm,
            legend style={at={(0.985,0.05)},anchor=south east,nodes=right,cells={line width=1pt}},
            ylabel={$\rho$},
            xlabel={$\delta$},
            xmin=-0.6,
            xmax=1.0,
            ymin=0,
            clip marker paths=true,
            xtick distance=0.2,
            ytick distance=0.2,
            ylabel style={rotate=-90},
            grid=both]
    
            \addplot [color=matlabcolor1op,only marks] table [x={x-axis}, y={a}]{\datatablePEP};
            \addlegendentry{Thm.~\ref{thm:Lyapunov_iff}}

            \addplot [color=matlabcolor4op,line width=1.5pt] table [x={x-axis}, y={a}]{\datatableGhadimi};
            \addlegendentry{\cite[Thm. 2]{Ghadimi2015Global}}
            
            \end{axis}
        \end{tikzpicture}
        \caption{B\ref{item:numeric_linear} applied to the gradient method with heavy-ball momentum  when  $f_1\in\mathcal{F}_{1,10}$ and $\gamma = 1/10$.}
        \label{fig:subfig:heavy_linear}
    \end{subfigure}
    
    \caption{Convergence analysis of the (proximal) gradient method with heavy-ball momentum (see  Section~\ref{sec:gradient_method_heavy-ball_momentum} and Section~\ref{sec:proximal_gradient_method_heavy-ball_momentum}).}
    \label{fig:heavy}
\end{figure}
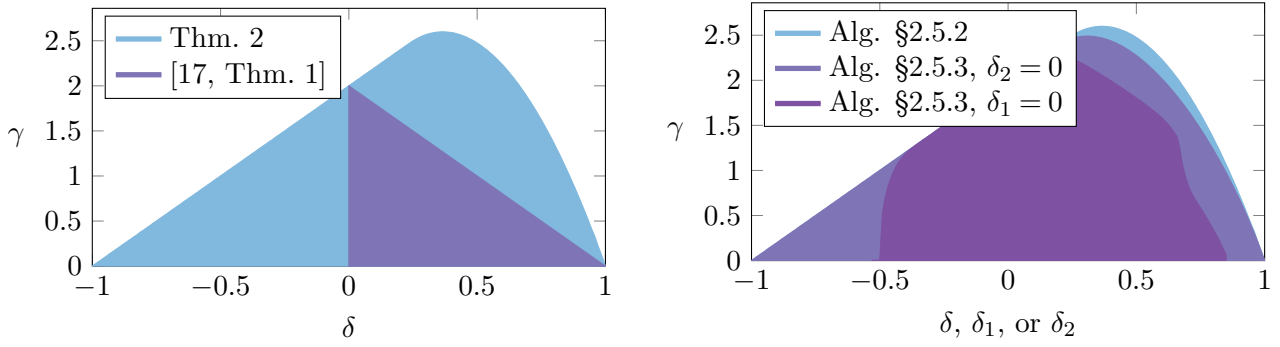
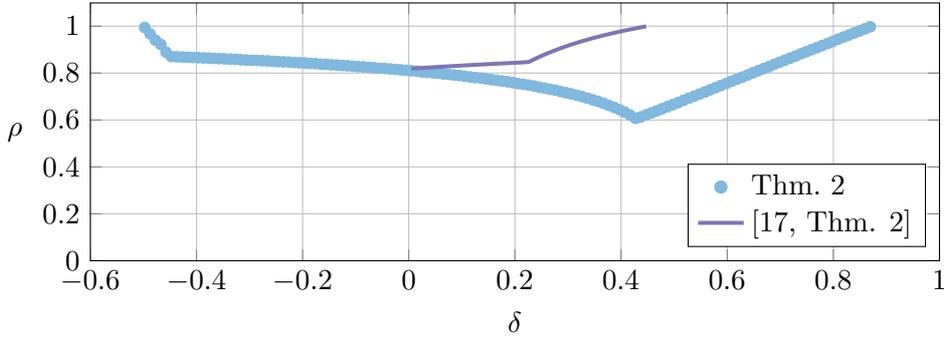

\subsection{Davis--Yin three-operator splitting method}

Consider the three-operator splitting method by Davis--Yin in Section~\ref{sec:davis_yin} in the case when $f_1\in\mathcal{F}_{0,\beta_1}$, $f_2\in\mathcal{F}_{1,2}$, $f_3\in\mathcal{F}_{0,\infty}$, $\gamma = 1/2$ and $\lambda = 1$. Figure~\ref{fig:Davis-Yin_linear_rates} shows the $\rho$ we obtain via~B\ref{item:numeric_linear}. In particular, note that we improve on the rates given in \cite[Theorem~D.6]{Davis_Yin_3op_arxiv} and \cite[Theorem~3]{pmlr-v80-pedregosa18a}.

\begin{figure}[t!]
    \centering
    \pgfplotstableread[col sep=comma,]{DY_linear_rate6.tex}\datatable
    \begin{tikzpicture}
      \begin{axis}[
        width=0.75\textwidth,
        height=5cm,
        legend style={at={(0.985,0.05)},anchor=south east,nodes=right,cells={line width=1pt}},
        ylabel={$\rho$},
        xlabel={$\beta_1$},
        xmin=0,
        xmax=40,
        ymin=0, 
        clip marker paths=true,
        xtick distance=5,
        ytick distance=0.2,
        ylabel style={rotate=-90},
        grid=both]

        \addplot [color=matlabcolor1op,only marks] table [x={x-axis}, y={a}]{\datatable};
        \addlegendentry{Thm.~\ref{thm:Lyapunov_iff}}

        \addplot[color=matlabcolor4op,line width=1.5pt] table [x={x-axis}, y={b}]{\datatable};
        \addlegendentry{\cite[Thm. D.6]{Davis_Yin_3op_arxiv}}

        \addplot[color=matlabcolor5!80!gray,line width=1.5pt] table [x={x-axis}, y={c}]{\datatable};
        \addlegendentry{\cite[Thm. 3]{pmlr-v80-pedregosa18a}}
        
        \end{axis}
    \end{tikzpicture}
    \caption{B\ref{item:numeric_linear} applied to the three-operator splitting method by Davis and Yin (see Section~\ref{sec:davis_yin}) when $f_1\in\mathcal{F}_{0,\beta_1}$, $f_2\in\mathcal{F}_{1,2}$, $f_3\in\mathcal{F}_{0,\infty}$, $\gamma = 1/2$ and $\lambda = 1$, the linear convergence rate given in \cite[Theorem~D.6]{Davis_Yin_3op_arxiv}, and the linear convergence rate given in \cite[Theorem~3]{pmlr-v80-pedregosa18a}.}
    \label{fig:Davis-Yin_linear_rates}
\end{figure}
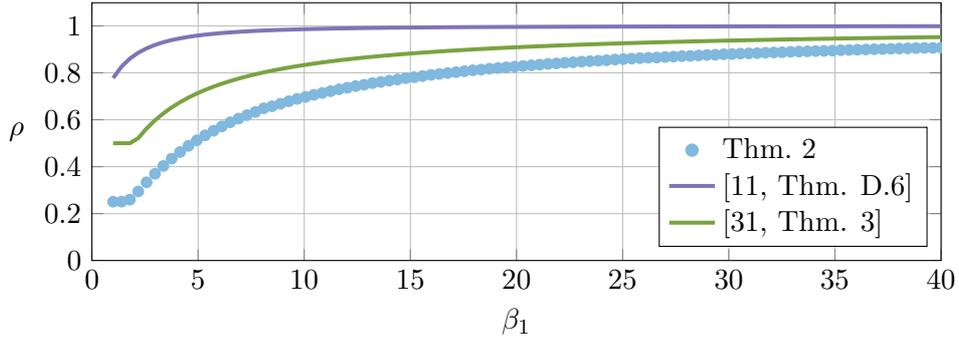

\subsection{Chambolle--Pock method}
Consider the special case of the Chambolle--Pock method when the linear operator is restricted to be the identity operator $\Id$, as presented in Section~\ref{sec:chambolle_pock}. Standard convergence proofs, e.g., the ones in \cite{Chambolle_Pock_ergodic_2011}, allow in this setting for $\theta=1$ and $\tau_1,\tau_2>0$ satisfying $\tau_1\tau_2<1$.

Figure~\ref{fig:ChambollePockParameterRegion} shows the range of parameters $\theta$, $\tau_1$, and $\tau_2$ when $\tau_1=\tau_2\geq 0.5$ that we obtain via~B\ref{item:numeric_ergodic} when $f_1,f_2\in\mathcal{F}_{0,\infty}$. This is a significantly larger region than what traditional analyses allow for. In particular, we see that $\theta\neq 1$ is a valid choice and that $\tau_1\tau_2>1$ is also valid for many choices of $\theta$. Moreover, for comparison, Figure~\ref{fig:ChambollePockParameterRegion} also contains the region if we add the additional restriction in~\eqref{eq:Lyapunov_condition} that 
\begin{align}\label{eq:ChambollePock:Qq_restriction}
    Q = 
    \begin{bmatrix}
        Q_{xx}& 0\\
        0 & 0
    \end{bmatrix},
\end{align}
 where $Q_{xx}\in\sym^{n}$ and modify $P$ in~B\ref{item:numeric_ergodic} so that 
\begin{align}\label{eq:ChambollePock:P_modification}
    P = 
    \begin{bmatrix}
        I & 0\\
        0 & 0
    \end{bmatrix},
\end{align}
where $I$ is the identity matrix of size $n\times n$. With these additional restrictions, we recover exactly the traditional convergence region.

Figure~\ref{fig:ChambollePockLinear} shows the $\rho$ that we obtain via~B\ref{item:numeric_linear} when $f_1,f_2\in\mathcal{F}_{0.05,50}$ in the region when $\tau_1=\tau_2\geq 0.5$. In particular, we note that the smallest $\rho$ is obtained for the parameters $\tau_1=\tau_2=1.6$ and $\theta=0.22$, giving a value of $\rho=0.8812$. If we restrict to the feasible parameter region in Figure~\ref{fig:ChambollePockParameterRegion}, the optimal parameters are $\tau_1=\tau_2=1.5$ and $\theta=0.35$ with $\rho=0.8891$. Both these rates are significantly better than what can be achieved with traditional parameter choices, where the optimal choice is $\tau_1=\tau_2=0.99$ and $\theta=1$ giving $\rho=0.9266$.

\begin{figure}[t!]
    \centering
    \begin{subfigure}[t]{0.49\textwidth}
        \pgfplotstableread[col sep=comma,]{ChambollePock_CVX_theta_vs_sigtau_bounds.tex}\datatable
        \pgfplotstableread[col sep=comma,]{ChambollePock_CVX_theta_vs_sigtau_bounds_restrictedLyapunov.tex}\datatablerestrictQ
        \begin{tikzpicture}[baseline,remember picture]
            \begin{axis}[
            width=\textwidth,
            height=5cm,
            legend style={at={(0.98,0.98)},anchor=north east,nodes=right,cells={line width=2pt}},
            ylabel={$\theta$},
            xlabel={$\tau_1=\tau_2$},
            xmin=0.5,
            xmax=1.75,
            ymin=-0.5,
            ymax=8.0,
            ylabel style={rotate=-90},
            ytick distance=2]
                \addplot [name path=ub,matlabcolor1op] table [x={x-axis}, y={b}]{\datatable};
                \addplot [name path=lb,matlabcolor1op,forget plot] table [x={x-axis}, y={a}]{\datatable};
                \addplot [matlabcolor1op,forget plot] fill between [of=lb and ub];
                \addlegendentry{Thm.~\ref{thm:Lyapunov_iff}}
                
                \addplot [matlabcolor7op,line width=1pt] table [x={x-axis}, y={a}]{\datatablerestrictQ};
                \addlegendentry{Thm.~\ref{thm:Lyapunov_iff}---restricted}
                
            \end{axis}
        \end{tikzpicture}
        \caption{B\ref{item:numeric_ergodic} applied to the Chambolle--Pock method in the region $\tau_1=\tau_2\geq 0.5$ when $f_1,f_2\in\mathcal{F}_{0,\infty}$, and also with the additional restrictions in~\eqref{eq:ChambollePock:Qq_restriction} and~\eqref{eq:ChambollePock:P_modification}.}
        \label{fig:ChambollePockParameterRegion}
    \end{subfigure}
    \begin{subfigure}[t]{0.49\textwidth}
        \pgfplotstableread[col sep=comma,]{ChambollePock_linear_full_region.tex}\datatable
        \begin{tikzpicture}[baseline,remember picture]
            \begin{axis}[
            width=\textwidth,
            height=5cm,
            view={0}{90},
            colorbar horizontal,
            colorbar style={ylabel={$\rho$},ylabel style={rotate=-90}},
            colormap name=mycolormap,
            ylabel={$\theta$},
            xlabel={$\tau_1=\tau_2$},
            xmin=0.5,
            xmax=1.75,
            ymin=-0.5,
            ymax=8.0,
            ylabel style={rotate=-90},
            ytick distance=2,mesh/ordering=y varies]
                \addplot3 [surf,mesh/rows=11]
                table [x={sigtau}, y={theta}, z={rho}]{\datatable};
            \end{axis}
        \end{tikzpicture}
        \caption{B\ref{item:numeric_linear} applied to the Chambolle--Pock method in the region where $\tau_1=\tau_2\geq 0.5$ when $f_1,f_2\in\mathcal{F}_{0.05,50}$.}
        \label{fig:ChambollePockLinear}
    \end{subfigure}
    \caption{Convergence analysis of the Chambolle--Pock method (see  Section~\ref{sec:chambolle_pock}).}
    \label{fig:ChambollePock}
\end{figure}
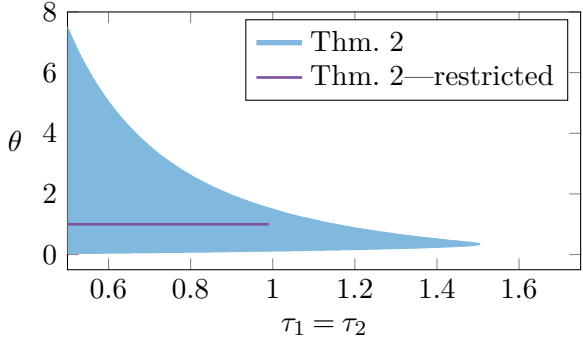
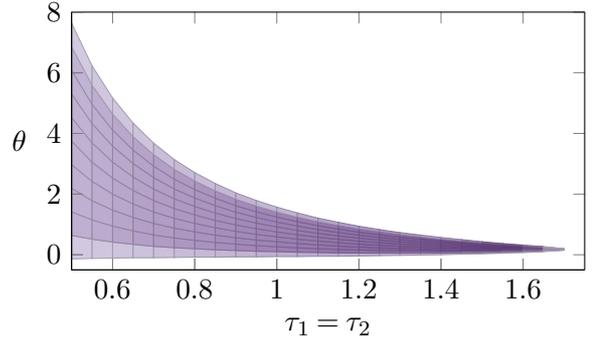  

\section{Proof of Lemma~\ref{lem:PSD_on_alg}}\label{sec:lemma_1_proof}
We prove Lemma~\ref{lem:PSD_on_alg} only in the case $\mathcal{I} = \set{\text{\o},+,\star}$, as the case $\mathcal{I} = \set{\text{\o},\star}$ is analogous. Recall that we assume that Assumption~\ref{ass:ABCD_solution_and_fixed_point_condition} and Assumption~\ref{ass:well-posedness} hold.

\begin{proof}[Proof of Lemma~\ref{lem:PSD_on_alg}]
We prove Lemma~\ref{lem:PSD_on_alg} in a sequence of steps:

\paragraph{Formulating the primal semidefinite program.} Recall that $S_{\Phi}^\star$ is the optimal value of~\eqref{eq:primalPEP}. By Corollary~\ref{cor:interpolation}, the constraints of~\eqref{eq:primalPEP} can equivalently be written as
\begin{equation}\label{eq:PEP_after_interpolation}
    \begin{aligned}
    &  \bx_{+} = (A\kron\Id) \bx + (B\kron\Id) \bu, \\ 
    & \by = (C\kron\Id) \bx + (D\kron\Id) \bu,  \\ 
    & \by_{+} = (C\kron\Id) \bx_{+} + (D\kron\Id) \bu_{+}, \\ 
    & \bx_{\star} = (A\kron\Id) \bx_{\star} + (B\kron\Id) \bu_{\star}, \\ 
    & \by_{\star} = (C\kron\Id) \bx_{\star} + (D\kron\Id) \bu_{\star}, \\ 
    & {\hbox{{\bf for each } $l\in\llbracket 1,m\rrbracket$}}\\ 
    &  \indentconstr\ba_{l}^{\top}(\bFcn-\bFcn_{+}) + \quadform{\bM_l}{(\by-\by_{+},\bu,\bu_{+})}\leq 0,\\ 
    &  \indentconstr\ba_{l}^{\top}(\bFcn_{+}-\bFcn) + \quadform{\bM_l}{(\by_{+}-\by,\bu_{+},\bu)}\leq 0,\\ 
    &  \indentconstr\ba_{l}^{\top}(\bFcn-\bFcn_{\star}) + \quadform{\bM_l}{(\by-\by_{\star},\bu,\bu_{\star})}\leq 0,\\ 
    &  \indentconstr\ba_{l}^{\top}(\bFcn_{\star}-\bFcn) + \quadform{\bM_l}{(\by_{\star}-\by,\bu_{\star},\bu)}\leq 0,\\ 
    &  \indentconstr\ba_{l}^{\top}(\bFcn_{+}-\bFcn_{\star}) + \quadform{\bM_l}{(\by_{+}-\by_{\star},\bu_{+},\bu_{\star})}\leq 0,\\ 
    &  \indentconstr\ba_{l}^{\top}(\bFcn_{\star}-\bFcn_{+}) + \quadform{\bM_l}{(\by_{\star}-\by_{+},\bu_{\star},\bu_{+})}\leq 0,\\ 
    &  \indentconstr\quadform{\bM_l}{(0,\bu,\bu)} \leq 0,\\ 
    &  \indentconstr\quadform{\bM_l}{(0,\bu_{+},\bu_{+})}\leq 0,\\ 
    &  \indentconstr\quadform{\bM_l}{(0,\bu_{\star},\bu_{\star})}\leq 0.\\ 
    & {\hbox{{\bf end}}}
    \end{aligned}
\end{equation}
By Corollary~\ref{cor:interpolation}, the last three constraints can be dropped since they encode $0\leq 0$. By inserting the $\by$, $\by_{+}$, and $\by_{\star}$ equalities and using the notation $\XId{X}=(X\kron\Id)$, the constraints in~\eqref{eq:PEP_after_interpolation} can be written as
\begin{equation}\label{eq:PEP_interpolation_constraints_1}
    \begin{aligned}
     & \bx_{+} = \XId{A} \bx + \XId{B} \bu,   \\ 
     & \bx_{\star} = \XId{A} \bx_{\star} + \XId{B} \bu_{\star}, \\ 
     & {\hbox{{\bf for each } $l\in\llbracket 1,m\rrbracket$}}\\ 
     &  \indentconstr\ba_{l}^{\top}(\bFcn-\bFcn_{+}) + \quadform{\bM_l}{(\XId{C} (\bx-\bx_{+}) + \XId{D}\bu - \XId{D}\bu_{+},\bu,\bu_{+})}\leq 0,\\ 
     &  \indentconstr\ba_{l}^{\top}(\bFcn_{+}-\bFcn) + \quadform{\bM_l}{(\XId{C} (\bx_{+}-\bx) -\XId{D}\bu + \XId{D} \bu_{+},\bu_{+},\bu)}\leq 0,\\ 
     &  \indentconstr\ba_{l}^{\top}(\bFcn-\bFcn_{\star}) + \quadform{\bM_l}{(\XId{C} (\bx-\bx_{\star}) + \XId{D}\bu- \XId{D}\bu_{\star},\bu,\bu_{\star})}\leq 0, \\ 
     &  \indentconstr\ba_{l}^{\top}(\bFcn_{\star}-\bFcn) + \quadform{\bM_l}{(-\XId{C} (\bx-\bx_{\star}) -\XId{D}\bu + \XId{D} \bu_{\star},\bu_{\star},\bu)}\leq 0,\\ 
     &  \indentconstr\ba_{l}^{\top}(\bFcn_{+}-\bFcn_{\star}) + \quadform{\bM_l}{(\XId{C} (\bx_{+}-\bx_{\star}) + \XId{D} \bu_{+} - \XId{D}\bu_{\star},\bu_{+},\bu_{\star})}\leq 0, \\ 
     &  \indentconstr\ba_{l}^{\top}(\bFcn_{\star}-\bFcn_{+}) + \quadform{\bM_l}{(-\XId{C} (\bx_{+}-\bx_{\star}) -\XId{D} \bu_{+} + \XId{D} \bu_{\star},\bu_{\star},\bu_{+})}\leq 0.\\ 
     & {\hbox{{\bf end}}}
    \end{aligned}
\end{equation}
Using the equality 
$\bx-\bx_{+} = (\bx - \bx_{\star}) - (\XId{A}(\bx - \bx_{\star}) + \XId{B}(\bu - \bu_{\star}))$ in the first two inequalities and inserting the $\bx_{+}$ and $\bx_\star$ equalities in the last two inequalities,~\eqref{eq:PEP_interpolation_constraints_1} can equivalently be written as
\begin{equation}\label{eq:PEP_interpolation_constraints_2}
    \begin{aligned}
    & \bx_{+} = \XId{A} \bx + \XId{B} \bu,   \\ 
    & \bx_{\star} = \XId{A}\bx_{\star} + \XId{B}\bu_{\star}, \\ 
    & {\hbox{{\bf for each } $l\in\llbracket 1,m\rrbracket$}}\\ 
    &  \indentconstr\ba_{l}^{\top}(\bFcn-\bFcn_{+}) + \quadform{\bM_l}{(\XId{(C(I-A))}(\bx-\bx_{\star})  + \XId{(D-CB)}\bu - \XId{D}\bu_{+} + \XId{(CB)}\bu_{\star},\bu,\bu_{+})}\leq 0,\\ 
     &  \indentconstr\ba_{l}^{\top}(\bFcn_{+}-\bFcn) + \quadform{\bM_l}{(\XId{(C(A-I))}(\bx-\bx_{\star})  + \XId{(CB-D)}\bu + \XId{D}\bu_{+}-\XId{(CB)}\bu_{\star},\bu_{+},\bu)}\leq 0,\\ 
     &  \indentconstr\ba_{l}^{\top}(\bFcn-\bFcn_{\star}) + \quadform{\bM_l}{(\XId{C} (\bx-\bx_{\star}) + \XId{D}\bu- \XId{D}\bu_{\star},\bu,\bu_{\star})}\leq 0, \\ 
     &  \indentconstr\ba_{l}^{\top}(\bFcn_{\star}-\bFcn) + \quadform{\bM_l}{(-\XId{C} (\bx-\bx_{\star}) -\XId{D}\bu + \XId{D} \bu_{\star},\bu_{\star},\bu)}\leq 0,\\ 
     &  \indentconstr\ba_{l}^{\top}(\bFcn_{+}-\bFcn_{\star}) + \quadform{\bM_l}{ \XId{(CA)}(\bx - \bx_{\star}) + \XId{(CB)}\bu + \XId{D} \bu_{+} - \XId{(D+CB)}\bu_{\star},\bu_{+},\bu_{\star})}\leq 0, \\ 
     &  \indentconstr\ba_{l}^{\top}(\bFcn_{\star}-\bFcn_{+}) + \quadform{\bM_l}{(-\XId{(CA)}(\bx - \bx_{\star}) - \XId{(CB)}\bu - \XId{D} \bu_{+} + \XId{(D+CB)}\bu_{\star},\bu_{\star},\bu_{+})}\leq 0,\\ 
     & {\hbox{{\bf end}}}
    \end{aligned}
\end{equation}
and using the same equality $\bx_{+}-\bx_\star=\XId{A}(\bx - \bx_{\star}) + \XId{B}(\bu - \bu_{\star})$, the objective function $\Phi(\bxi,\bxi_{+},\bxi_{\star})$ of~\eqref{eq:primalPEP}, given in~\eqref{eq:worst_case_objective_quadratic}, can be written as 
\begin{align}\notag
        \Phi(\bxi,\bxi_{+},\bxi_{\star})  =& \quadform{Q_{\text{\o}}}{(\bx-\bx_{\star},\bu,\bu_{\star})}+q_{\text{\o}}^{\top}(\bFcn-\bFcn_{\star}) \\  \label{eq:worst_case_objective_quadratic_inserted}
        & + \quadform{Q_{+}}{(\XId{A}(\bx-\bx_{\star}) + \XId{B}\bu - \XId{B}\bu_{\star}),\bu_{+},\bu_{\star})}+q_{+}^{\top}(\bFcn_{+}-\bFcn_{\star}).
\end{align}
Therefore, the first equality in~\eqref{eq:PEP_interpolation_constraints_2} can be dropped since nothing else in~\eqref{eq:PEP_interpolation_constraints_2} and~\eqref{eq:worst_case_objective_quadratic_inserted} depend on $\bx_+$.
Moreover, by replacing $\bx - \bx_{\star}$ with $\Delta\bx$, we get that~\eqref{eq:PEP_interpolation_constraints_2} can equivalently be written as
\begin{equation}\label{eq:PEP_interpolation_constraints_3}
    \begin{aligned}
    & \bx_{\star} = \XId{A}\bx_{\star} + \XId{B}\bu_{\star}, \\ 
    & {\hbox{{\bf for each } $l\in\llbracket 1,m\rrbracket$}}\\ 
    &  \indentconstr\ba_{l}^{\top}(\bFcn-\bFcn_{+}) + \quadform{\bM_l}{(\XId{(C(I-A))}\Delta\bx  + \XId{(D-CB)}\bu - \XId{D}\bu_{+} + \XId{(CB)}\bu_{\star},\bu,\bu_{+})}\leq 0,\\ 
     &  \indentconstr\ba_{l}^{\top}(\bFcn_{+}-\bFcn) + \quadform{\bM_l}{(\XId{(C(A-I))}\Delta\bx  + \XId{(CB-D)}\bu + \XId{D}\bu_{+}-\XId{(CB)}\bu_{\star},\bu_{+},\bu)}\leq 0,\\ 
     &  \indentconstr\ba_{l}^{\top}(\bFcn-\bFcn_{\star}) + \quadform{\bM_l}{(\XId{C} \Delta\bx + \XId{D}\bu- \XId{D}\bu_{\star},\bu,\bu_{\star})}\leq 0, \\ 
     &  \indentconstr\ba_{l}^{\top}(\bFcn_{\star}-\bFcn) + \quadform{\bM_l}{(-\XId{C} \Delta\bx -\XId{D}\bu + \XId{D} \bu_{\star},\bu_{\star},\bu)}\leq 0,\\ 
     &  \indentconstr\ba_{l}^{\top}(\bFcn_{+}-\bFcn_{\star}) + \quadform{\bM_l}{ \XId{(CA)}\Delta\bx + \XId{(CB)}\bu + \XId{D} \bu_{+} - \XId{(D+CB)}\bu_{\star},\bu_{+},\bu_{\star})}\leq 0, \\ 
     &  \indentconstr\ba_{l}^{\top}(\bFcn_{\star}-\bFcn_{+}) + \quadform{\bM_l}{(-\XId{(CA)}\Delta\bx - \XId{(CB)}\bu - \XId{D} \bu_{+} + \XId{(D+CB)}\bu_{\star},\bu_{\star},\bu_{+})}\leq 0,\\ 
     & {\hbox{{\bf end}}}
    \end{aligned}
\end{equation}
and that~\eqref{eq:worst_case_objective_quadratic_inserted} can equivalently be written as 
\begin{align}\notag
        \Phi(\bxi,\bxi_{+},\bxi_{\star})  =& \quadform{Q_{\text{\o}}}{(\Delta\bx,\bu,\bu_{\star})}+q_{\text{\o}}^{\top}(\bFcn-\bFcn_{\star}) \\ \label{eq:worst_case_objective_quadratic_inserted_translated}
        &+ \quadform{Q_{+}}{(\XId{A}\Delta\bx + \XId{B}\bu - \XId{B}\bu_{\star},\bu_{+},\bu_{\star})}+q_{+}^{\top}(\bFcn_{+}-\bFcn_{\star}).
\end{align}
The first line in~\eqref{eq:PEP_interpolation_constraints_3} and~\eqref{eq:ABCD_null_condition} in Assumption~\ref{ass:ABCD_solution_and_fixed_point_condition} imply that 
\begin{align*}
    \bu_\star =
    \begin{cases}
        0 & \text{if }m=1, \\
        \XId{\SumToZeroMat}\hat{\bu}_\star & \text{if }m>1.
    \end{cases}
\end{align*}
for some $\hat{\bu}_\star\in\calH^{m-1}$, where $\SumToZeroMat$ is defined in~\eqref{eq:SumToZeroMat_def}. 
This implies that the first line in~\eqref{eq:PEP_interpolation_constraints_3} can be written as $\bx_{\star} = \XId{A}\bx_{\star}$ if $m=1$ and $\bx_{\star} = \XId{A}\bx_{\star} + \XId{\p{B\SumToZeroMat}}\hat{\bu}_{\star}$ if $m>1$. 
Moreover, note that nothing else in~\eqref{eq:PEP_interpolation_constraints_3} and~\eqref{eq:worst_case_objective_quadratic_inserted_translated} depend on $\bx_{\star}$.
Therefore, $\bx_{\star} = 0$ is a valid choice in the $m=1$ case and in the $m>1$ case~\eqref{eq:ABCD_range_condition} in Assumption~\ref{ass:ABCD_solution_and_fixed_point_condition} gives that the first line in~\eqref{eq:PEP_interpolation_constraints_3} can be dropped since for each $\hat{\bu}_\star\in\calH^{m-1}$ there exists an $\bx_{\star}\in\calH^{n}$ such that $\bx_{\star} = \XId{A}\bx_{\star} + \XId{\p{B\SumToZeroMat}}\hat{\bu}_{\star}$ is satisfied. Therefore,~\eqref{eq:PEP_interpolation_constraints_3} can equivalently be written as 
\begin{equation}\label{eq:PEP_interpolation_constraints_4}
    \begin{aligned}
        & {\hbox{{\bf for each } $l\in\llbracket 1,m\rrbracket$}}\\ 
        &  \indentconstr\ba_{l}^{\top}(\bFcn-\bFcn_{+}) + \quadform{\bM_l}{(\XId{(C(I-A))}\Delta\bx  + \XId{(D-CB)}\bu - \XId{D}\bu_{+} + \XId{(CB\SumToZeroMat)}\hat{\bu}_{\star},\bu,\bu_{+})}\leq 0,\\ 
         &  \indentconstr\ba_{l}^{\top}(\bFcn_{+}-\bFcn) + \quadform{\bM_l}{(\XId{(C(A-I))}\Delta\bx  + \XId{(CB-D)}\bu + \XId{D}\bu_{+}-\XId{(CB\SumToZeroMat)}\hat{\bu}_{\star},\bu_{+},\bu)}\leq 0,\\ 
         &  \indentconstr\ba_{l}^{\top}(\bFcn-\bFcn_{\star}) + \quadform{\bM_l}{(\XId{C} \Delta\bx + \XId{D}\bu- \XId{(D\SumToZeroMat)}\hat{\bu}_{\star},\bu,\XId{\SumToZeroMat}\hat{\bu}_{\star})}\leq 0, \\ 
         &  \indentconstr\ba_{l}^{\top}(\bFcn_{\star}-\bFcn) + \quadform{\bM_l}{(-\XId{C} \Delta\bx -\XId{D}\bu + \XId{(D\SumToZeroMat)} \hat{\bu}_{\star},\XId{\SumToZeroMat}\hat{\bu}_{\star},\bu)}\leq 0,\\ 
         &  \indentconstr\ba_{l}^{\top}(\bFcn_{+}-\bFcn_{\star}) + \quadform{\bM_l}{ \XId{(CA)} \Delta\bx + \XId{(CB)}\bu + \XId{D} \bu_{+} - \XId{((D+CB)\SumToZeroMat)}\hat{\bu}_{\star},\bu_{+},\XId{\SumToZeroMat}\hat{\bu}_{\star})}\leq 0, \\ 
         &  \indentconstr\ba_{l}^{\top}(\bFcn_{\star}-\bFcn_{+}) + \quadform{\bM_l}{(-\XId{(CA)}\Delta\bx - \XId{(CB)}\bu - \XId{D} \bu_{+} + \XId{((D+CB)\SumToZeroMat)}\hat{\bu}_{\star},\XId{\SumToZeroMat}\hat{\bu}_{\star},\bu_{+})}\leq 0,\\ 
         & {\hbox{{\bf end}}}
    \end{aligned}
\end{equation}
and~\eqref{eq:worst_case_objective_quadratic_inserted_translated} can equivalently be written as 
\begin{align} \notag
        \Phi(\bxi,\bxi_{+},\bxi_{\star})  =& \quadform{Q_{\text{\o}}}{(\Delta\bx,\bu,\bu_{\star})}+q_{\text{\o}}^{\top}(\bFcn-\bFcn_{\star}) \\ \label{eq:worst_case_objective_quadratic_inserted_translated_SumToZeroMat}
        &+ \quadform{Q_{+}}{(\XId{A}\Delta\bx + \XId{B}\bu - \p{B\SumToZeroMat}_{\Id}\hat{\bu}_{\star},\bu_{+},\XId{\SumToZeroMat}\hat{\bu}_{\star})}+q_{+}^{\top}(\bFcn_{+}-\bFcn_{\star}).
\end{align}
If we let

\begin{align*}
    \bzeta&=(\Delta\bx,\bu,\bu_{+},\hat{\bu}_\star)\in\calH^{n}\times\calH^{m}\times\calH^{m}\times\calH^{m-1},\\
    \bm{\chi}&=(\bFcn-\bFcn_{\star},\bFcn_{+}-\bFcn_{\star})\in\reals^{m}\times\reals^{m},
\end{align*}
and use $\Sigma_{\text{\o}}$ and $\Sigma_{+}$ defined in~\eqref{eq:Sigma_matrices},~\eqref{eq:worst_case_objective_quadratic_inserted_translated_SumToZeroMat} can equivalently be written as 
\begin{align}\notag
\Phi(\bxi,\bxi_{+},\bxi_{\star})
        &=\quadform{Q_{\text{\o}}}{\p{\Sigma_{\text{\o}}}_{\Id}\bzeta}+\quadform{Q_{+}}{\p{\Sigma_{+}}_{\Id}\bzeta}+\bq^{\top}\bchi \\\notag
        &=\quadform{\Sigma_{\text{\o}}^{\top}Q_{\text{\o}}\Sigma_{\text{\o}}+\Sigma_{+}^{\top}Q_{+}\Sigma_{+}}{\bzeta}+\bq^{\top}\bchi \\ \label{eq:PEP_zeta_chi_cost}
        &=\quadform{\bQ}{\bzeta}+\bq^{\top}\bchi,
\end{align}
where $\bQ$ and $\bq$ are defined in~\eqref{eq:bQ_and_bq}. Using $E_{i,j}$ and $H_{i,j}$ defined in~\eqref{eq:E_matrices} and~\eqref{eq:H_matrices}, respectively,~\eqref{eq:PEP_interpolation_constraints_4} can equivalently be written as
\begin{align*}
    &{\hbox{\bf for each }} l\in\llbracket1,m\rrbracket{\hbox{ and distinct }} i, j\in\mathcal{I}\\
    &\indentconstr (H_{i,j}^{\top}\ba_l)^{\top}\bm{\chi} + \quadform{\bM_l}{\p{E_{i,j}}_{\Id}\bzeta}\leq 0, \\
    &{\hbox{\bf end}}
\end{align*}
which with $\bMlij=E_{i,j}^{\top}\bM_lE_{i,j}$ and $\balij=H_{i,j}^{\top}\ba_l$ (also defined in~\eqref{eq:Mlij_alij}) is equivalent to
\begin{equation}\label{eq:PEP_Mlij_constraint_form}
    \begin{aligned}
    &{\hbox{\bf for each }} l\in\llbracket1,m\rrbracket{\hbox{ and distinct }} i, j\in\mathcal{I}\\ 
    &\indentconstr \balij^{\top}\bm{\chi} + \quadform{\bMlij}{\bzeta}\leq 0. \\ 
    &{\hbox{\bf end}}
    \end{aligned}
\end{equation}

The equivalent reformulations~\eqref{eq:PEP_zeta_chi_cost} and~\eqref{eq:PEP_Mlij_constraint_form} give that~\eqref{eq:primalPEP} can be written as
\begin{equation}\label{eq:pre_pre_worst_case_SDP}
    \begin{aligned}
    & \underset{}{\text{maximize}}
    & & \quadform{\bQ}{\bzeta}+\bq^{\top}\bchi \\ 
    & \text{subject to} 
    & &{\hbox{\bf for each }} l\in\llbracket1,m\rrbracket{\hbox{ and distinct }} i, j\in\mathcal{I}\\ 
    &&&\indentconstr \balij^{\top}\bm{\chi} + \quadform{\bMlij}{\bzeta}\leq 0, \\
    &&&{\hbox{\bf end}} \\
    &&& \bzeta\in\calH^{n+3m-1},\,\bchi\in\reals^{2m}.
    \end{aligned}
\end{equation}

We define the \emph{Gramian function}
$g:\calH^k\to\sym^k_{+}$ such that $[g(\bz)]_{i,j}=\inner{z^{(i)}}{z^{(j)}}$ for each $i,j\in\llbracket1,k\rrbracket$ and $\bz=(z^{(1)},\ldots,z^{(k)})\in\calH^k$. If $M\in\sym^{k}$ and $\bz\in\calH^k$, then $\quadform{M}{\bz}=\trace\p{Mg(\bz)}$. Using this identity,~\eqref{eq:pre_pre_worst_case_SDP} can be written as
\begin{equation}\label{eq:pre_worst_case_SDP}
    \begin{aligned}
    & \underset{}{\text{maximize}}
    & & \trace\p{\bQ g(\bzeta)}+\bq^{\top}\bchi \\ 
    & \text{subject to} 
    && {\hbox{\bf for each }} l\in\llbracket1,m\rrbracket{\hbox{ and distinct }} i, j\in\mathcal{I}\\ 
    &&&\indentconstr \balij^{\top}\bm{\chi} + \trace\p{\bMlij g(\bzeta)}\leq 0, \\
    &&&{\hbox{\bf end}} \\
    &&& \bzeta\in\calH^{n+3m-1},\,\bchi\in\reals^{2m},
    \end{aligned}
\end{equation}
with optimal value equal to $S_{\Phi}^\star$. 
The problem
\begin{equation}\label{eq:worst_case_SDP}
    \begin{aligned}
    & \underset{}{\text{maximize}}
    & & \trace\p{\bQ G}+\bq^{\top}\bchi \\
    & \text{subject to} 
    && {\hbox{\bf for each }} l\in\llbracket 1,m\rrbracket {\hbox{ and distinct }} i, j\in\mathcal{I}\\ 
    & & &  \indentconstr\balij^{\top}\bm{\chi} + \trace\p{\bMlij G}\leq 0,\\
    & & &{\hbox{\bf end }} \\
    &&& G\in\sym^{n+3m-1}_{+},\,\bchi\in\reals^{2m},
    \end{aligned}
\end{equation}
is a relaxation of~\eqref{eq:pre_worst_case_SDP}, and therefore, has optimal value greater or equal to $S_{\Phi}^\star$. 

We will make use of the following fact: If $\dim\,\calH\geq k$, then $G\in\sym_+^k$ if and only if there exists $\bz\in\calH^k$ such that $G=g(\bz)$.
\cite[Lemma 3.1]{Ryu_2020} shows the result for the case $k=4$ and is based on the Cholesky decomposition of positive semidefinite matrices. The general case is a straightforward extension. This fact implies that if $\dim(\calH)\geq n+3m-1$, then~\eqref{eq:worst_case_SDP} has optimal value equal to $S_{\Phi}^\star$. Note that~\eqref{eq:worst_case_SDP} is a semidefinite program.

\paragraph{Dual problem and strong duality.} First, we derive the dual problem of~\eqref{eq:worst_case_SDP}. If we introduce dual variables $\lambda_{(l,i,j)}\geq 0$ for each $l\in\llbracket 1,m\rrbracket$ and distinct $i,j\in\mathcal{I}$ for the inequality constraints, the objective function of the dual problem becomes
\begin{align*}
    &\underset{G\in\sym^{n+3m-1}_{+},\,\bchi\in\reals^{2m}}{\sup}\p{
 \trace\p{\bQ G}+\bq^{\top}\bchi - \sum_{l=1}^{m}\sum_{\substack{i,j\in\mathcal{I}\\i\neq j}}\lambda_{(l,i,j)}\p{\balij^{\top}\bm{\chi} + \trace\p{\bMlij G}}}\\
    &=\underset{G\in\sym^{n+3m-1}_{+}}{\sup}\;\trace\p{\p{\bQ-\sum_{l=1}^{m}\sum_{\substack{i, j\in\mathcal{I}\\i\neq j}}\lambda_{(l,i,j)}\bMlij}G}+\underset{\bchi\in\reals^{2m}}{\text{sup}}\p{\bq -\sum_{l=1}^{m}\sum_{\substack{i, j\in\mathcal{I}\\i\neq j}}\lambda_{(l,i,j)}\balij}^{\top}\bchi.
\end{align*}
Since the dual problem is a minimization problem over the dual variables $\lambda_{(l,i,j)}$, we conclude that it can be written as
\begin{equation}\label{eq:worst_case_dual}
    \begin{aligned}
        & \underset{}{\text{minimize}}
        & & 0 \\
        & \text{subject to} && \lambda_{(l,i,j)}\geq 0 \text{ for each } l\in\llbracket 1,m\rrbracket \text{ and distinct }i,j\in\mathcal{I},\\
        & & & - \bQ + \sum_{l=1}^{m}\sum_{\substack{i, j\in\mathcal{I}\\i\neq j}}\lambda_{(l,i,j)}\bMlij\succeq 0,\\
        & & & - \bq + \sum_{l=1}^{m}\sum_{\substack{i, j\in\mathcal{I}\\i\neq j}}\lambda_{(l,i,j)}\balij=0,
    \end{aligned}
\end{equation}
which is a feasibility problem.

Next, suppose that the primal problem~\eqref{eq:worst_case_SDP} has a Slater point, i.e., there exists $G\in\sym_{++}^{n+3m-1}$ and $\bchi\in\reals^{2m}$ such that 
\begin{equation}\label{eq:slater_to_worst_case_SDP}
\begin{aligned}
    &\balij^{\top}\bm{\chi} + \trace\p{\bMlij G}\leq 0 \text{ for each } l\in\llbracket 1,m\rrbracket \text{ and distinct }i,j\in\mathcal{I}.
    \end{aligned}
\end{equation}
Then there is no duality gap, i.e., strong duality holds, between the primal problem~\eqref{eq:worst_case_SDP} and the dual problem~\eqref{eq:worst_case_dual}.

\paragraph{Alternatives.} The last step of the proof compares the optimal values of~\eqref{eq:primalPEP} and the dual problem~\eqref{eq:worst_case_dual}. We have established that $S_{\Phi}^\star$ is less than or equal to the optimal value of~\eqref{eq:worst_case_dual}. Thus, a sufficient condition for $S_{\Phi}\leq 0$ is that the dual problem~\eqref{eq:worst_case_dual} is feasible. In addition, if $\dim(\calH)\geq n+3m-1$ and there exists $G\in\sym_{++}^{n+3m-1}$ and $\bchi\in\reals^{2m}$ such that ~\eqref{eq:slater_to_worst_case_SDP} holds, the above condition also becomes a necessary condition.

This concludes the proof.
\end{proof}  

\section{Conclusions}\label{sec:conclusions}
We developed a flexible methodology for automated convergence analysis of a large class of first-order methods for solving convex optimization problems. The main result is a necessary and sufficient condition for the existence of a quadratic Lyapunov inequality \Update{within a predefined class of Lyapunov inequalities, which} amounts to solving a small-sized semidefinite program. The applicability and efficacy of the methodology are demonstrated by providing several new convergence results in Section~\ref{sec:numerical_examples}. 

We mention a few possible modifications that can be made to extend or modify the applicability and possibly improve the convergence results of the methodology. These were not pursued in the current work in order to maintain accessibility and not introduce unnecessary burdensome notation, but do constitute proper avenues for future works. First, each functional component $f_i$ in~\eqref{eq:the_problem_inclusion} can be modified to be from any function class that has quadratic interpolation constraints, e.g., the class of smooth functions~\cite{Taylor_Hendrickx_Glineur_2017b}, the class of convex and quadratically upper bounded functions~\cite{Baptiste_Taylor_Dieuleveut_2022}, the class of convex and Lipschitz continuous functions~\cite{Taylor_Hendrickx_Glineur_2017b}, etc. Second, the algorithm representation~\eqref{eq:linear_system_with_nonlinearity} can be extended to allow for more types of oracles (including, e.g., Frank--Wolfe-type oracles~\cite{Taylor_Hendrickx_Glineur_2017b}, Bregman-type oracles~\cite{dragomir2021optimal}, or approximate proximal point oracles~\cite{barre2022principled}) but also multiple evaluations of the same subdifferential $\partial f_{i}$ during the same iteration, enabling the analysis of, e.g., the forward--backward--forward splitting method of Tseng~\cite{Tseng_2000}. Third, similar to \cite{TaylorLessard_2018,Lessard_2016}, it is possible to extend the quadratic Lyapunov function and the quadratic residual function ansatzes to not only contain the current iterate $\bxi_{k}$, but some history $\bxi_{k}, \bxi_{k-1}, \ldots, \bxi_{k+1-h}$ for some integer $h\geq1$. This would allow exploring a greater class of Lyapunov inequalities that may lead to improved convergence results.

Finally, the methodology can be used in the process of finding analytical Lyapunov inequalities, convergence results\Update{, and optimal algorithm parameters}. \Update{ Indeed, finding a Lyapunov inequality is equivalent to solving a parametric semidefinite program. Obtaining a Lyapunov inequality involves discovering a closed-form solution for this semidefinite program, which can then be utilized to derive convergence results and select algorithm parameters. Works that aim to enable the obtaining of closed-form solutions include \cite{Henrion_Naldi_El_Din_LMI_2016,Henrion_Naldi_El_Din_SPECTRA_2019}, while a previous work focused on selecting algorithm parameters can be found in \cite{Vanscoy_2017}.}  

\section*{Acknowledgments}
This work was partially supported by the ELLIIT Strategic Research Area and the Wallenberg AI, Autonomous Systems and Software Program (WASP) funded by the Knut and Alice Wallenberg Foundation. S.~Banert and P.~Giselsson acknowledge support from Vetenskapsr\aa{}det grant VR~2021-05710. A.B.~Taylor acknowledges support from the French ``Agence Nationale de la Recherche'' as part of the ``Investissements d'avenir'' program, reference ANR-19-P3IA-0001 (PRAIRIE 3IA Institute), as well as support from the European Research Council (grant SEQUOIA 724063). We are grateful to Olle Kjellqvist for pointing out some of the details in Remark~\ref{rem:rank_n}. We also thank Anders Rantzer for providing comments and suggestions on an earlier version of this manuscript.  

\begingroup
\sloppy
\printbibliography
\endgroup

\end{document}